\def\allowUnusedRef#1{}
\definecolor{orange(colorwheel)}{rgb}{1.0, 0.5, 0.0}
\newcommand{\df}[1]{\textbf{#1}\index{#1}}
\newcommand{\mathdf}[1]{#1\indexmath{#1}}
\def\indexmath#1{\index{\ensuremath{#1}}}
\newcommand{\CR}{\color{red}}
\numberwithin{equation}{subsection}
\newtheorem{theorem}{Theorem}[subsection]
\newtheorem{lemma}[theorem]{Lemma}
\newtheorem{proposition}[theorem]{Proposition}
\theoremstyle{definition}
\theoremstyle{definition}
\theoremstyle{remark}
\newtheorem*{rep@theorem}{\rep@title}
\newcommand{\newreptheorem}[2]{%
\newenvironment{rep#1}[1]{%
 \def\rep@title{#2 \ref{##1}}%
 \begin{rep@theorem}}%
 {\end{rep@theorem}}}
\def\fin{\breve}
\def\cTurnK{\tilde c_\ourK}
\def\muTurnK{\tilde \mu_\ourK}
\def\muK{\mu_\ourK}
\newcommand{\floor}[1]{\lfloor #1 \rfloor}
\newcommand{\ov}[1]{\overline{#1}}
\def\ovalpha{{1 -\alpha}}
\title{The Satisfiability Threshold for K-XOR Games}
\author{Jared~A.~Hughes$^*$ and J.~William~Helton\footnote{University of California, San Diego}} 
\date{\today}
\def\R{{\mathbb R}}
\def\bbZ{{\mathbb Z}}
\def\ben{\begin{enumerate}}
\def\een{\end{enumerate}}
\def\z2{{\mathbb Z}_2}
\def\rkz2{{rk_{\z2}}}
\newcommand{\half}{\frac{1}{2}}
\def\expMTwo{\exp_2}
\newcommand\cA{\mathcal{A}}
\def\paper{paper}
\DeclareMathOperator{\Ex}{\mathbb{E}}
\let\Pr\undefined
\DeclareMathOperator{\Pr}{Pr}
\def\zzz{\boldsymbol{\zeta}}
\def\zzzlin{\zzz_{lin}}
\def\zzzsqrt{\zzz_{sqrt}}
\def\zzzComp{\widehat{\zzz}_{\ourK,c}}
\def\zzzKc{\zzz_{\ourK,c}}
\def\zzzKci{\zzz_{\ourK,c_i}}
\def\zzzHelper{\zzz^*_{\ourK}}
\def\zetaHelper{\zeta^*}
\def\deltaBallK{{\hat\delta_\ourK}}
\def\deltaK{\delta_\ourK}
\def\epsBallK{{\hat\epsilon_\ourK}}
\def\epsK{\epsilon_\ourK}
\def\zetaLowerK{\underline{\zeta}_\ourK}
\def\betaK{\beta_\ourK}
\def\zetaLLK{\eta_\ourK}
\def\epsLLK{\hat\eta_\ourK}
\def\aas{a.a.s.\@}
\def\bs{\bigskip}
\def\littlekIntro{K}
\def\ourK{K}
\def\littlek{k}
\def\Hk{H_\littlek}
\def\Hone{H_1}
\def\JK{J_{\ourK}}
\def\LK{L_{\ourK}}
\def\bn{{\bm{n}}}
\def\Ximn{\Xi_{m,\bn}}
\def\Psimn{\Psi_{m,\bn}}
\def\Psimnfin{\Psi_{\fin m,\fin \bn}}
\def\kXORSAT{{$\littlek$-XORSAT}}
\def\kXORSATintro{{$\littlekIntro$-XORSAT}}
\def\KXORGAME{{$\ourK$-XORGAME}}
\def\sigmn{{\abs{\bn}}}
\def\As{{(A_1, A_2, \cdots, A_\ourK)}}
\def\cThreshK{{c^*_\ourK}}
\def\Zmnl{Z_{m,\bn}^{(\ell)}}
\def\Ymnkl{Y_{m,n,\littlek}^{(\ell)}}
\def\mypdv#1#2{{\frac{\partial#1}{\partial#2}}}
\def\textforall{\qquad\mathrm{for\ all}\ }
\def\foralldisplay#1{\textforall #1}
\def\alphaloK{\underline{\alpha}_\ourK}
\def\alphahiK{\overline{\alpha}_\ourK}
\begin{document}

\allowUnusedRef{index}
\allowUnusedRef{table-of-contents}
\allowUnusedRef{eq:nb-1-LK-neg}
\allowUnusedRef{fig:nb-Jklin456}
\allowUnusedRef{eq:nb-upper-bound-LK}
\allowUnusedRef{lem:nb-Kgeq4-J-bound}
\allowUnusedRef{lem:nb-K3-Jsqrt-bound}
\allowUnusedRef{eq:nb-2a-Jsqrt3-regime}
\allowUnusedRef{eq:nb-2b-Jsqrt3-regime}
\allowUnusedRef{eq:nb-4-Jsqrt3-at-half}
\allowUnusedRef{eq:nb-1a-beta-alpha-fits}
\allowUnusedRef{tab:nb-computed-cThreshK}
\allowUnusedRef{fig:nb-JKLin_K4_varying_c}
\allowUnusedRef{sec:nb-bounds-along-zzzlin}
\allowUnusedRef{eq:nb-3-Jsqrt3-second-partial-alpha}

\date{\today}
\maketitle
\begin{abstract}

A $\ourK$-XORGAME system corresponds to a $\ourK$-XORSAT system with the additional restriction that the variables divide uniformly into $\ourK$ blocks. This forms a system of $m$ equations with $\ourK n$ unknowns over $\mathbb Z_2$, and a perfect strategy corresponds to a solution to these equations. Equivalently, such equations correspond to colorings of a $\ourK$-uniform $\ourK$-partite hypergraph. 
This paper  proves that the satisfiability 
threshold of $m/n$ for \KXORGAME{} problems exists
and equals the satisfiability threshold for \kXORSATintro{}.

\end{abstract}

\section{Introduction}

This paper concerns the satisfiability threshold for existence of a perfect strategy 
for a 
\KXORGAME, effectively a linear algebra problem over the binaries. 
A similar-looking structure 
is found in the classical \kXORSATintro{} problem.
There has been heavy study of the satisfiability threshold for the \kXORSATintro{}
problem,
with the celebrated  paper of Dubois-Mandler \cite{dBM-FOCS} determining the threshold for $\littlekIntro=3$, and subsequent results (\cite{pittel2016satisfiability}) determining the threshold for \kXORSATintro{} with general $\littlekIntro$,
giving the exact value of the satisfiability threshold as the solution to an explicit transcendental equation.

There is much study of XORSAT, with various recent approaches (together with thorough bibliographies on
XORSAT) given in 
\cite{cojaoghlan2023kxorsatJournal} and \cite{chatterjee2025guidedDecimation}.
Also, siblings of XORSAT can be very challenging; for example $\littlekIntro$-SAT thresholds were not established for large $\littlekIntro$ until \cite{ding2021largeK}.

This \paper{} will prove  
that the satisfiability threshold for \KXORGAME{} equals that for  \kXORSATintro{}. 

It is culturally interesting that perfect strategies for 
XOR games seem to be studied little in the computer science community, while XORSAT is a paradigm in the field of classical computational complexity.
History sheds light on this;
XOR games arose in quantum physics.
The first XOR game to be studied is a 2XOR game now called the CHSH game. With associated experiments (in 1972), it underlays the 2022 Nobel Prize \cite{nobel2022physics}
for establishing that ``quantum entanglement" exists.

XOR games can be analyzed for both classical strategies and quantum strategies.
If a quantum strategy exists which scores better than any 
classical strategy, then it must use an entangled state, so an entangled state must exist.
For 
any 2XOR game the advantage of a quantum strategy 
over a classical strategy can be proved to be
very limited.
Subsequently, people studied 3XOR games.
3XOR games were the first proved to have a sequence of games for which the  advantage of a
  quantum strategy over a best classical strategy 
goes to infinity (\cite{perez2008unbounded,brietVidick2012boundsXOR}).

Some games have ``perfect quantum strategies"
and some do not.
For many years, it was not known if
determining which is the case for a given 
3XOR game is decidable. This was settled in  \cite{watts20203xor} where  a
polynomial time algorithm was proved to 
exist\footnote{For K-XOR having $K\geq 4$ this problem remains open.}
Then an effective algorithm together with 
computer experiments were given in \cite{watts2022satisfiability}.
The experiments motivate this paper by
suggesting that the following have satisfiability thresholds, and that the thresholds are all equal:
\begin{enumerate}
\item 
classical perfect strategies for 3XOR games,
\item 
solutions existing to 3XORSAT,
and 
\item quantum perfect strategies for 3XOR games.
\end{enumerate}
This paper confirms the first equivalence and hence is entirely classical as opposed to quantum in nature.

The introduction starts with a statement of the key linear-algebra problem over binaries, which arises when studying \KXORGAME{}s.
Ironically, we shall not actually state exactly what \KXORGAME{}s are,
since the linear algebra is simpler and contains the full mathematical issue.
Descriptions of cooperative games are available elsewhere, in particular \KXORGAME{}s
c.f. \cite{watts2018algorithms} or \cite{watts20203xor}.
The introduction continues with definitions we need, states the main theorems, and
gives brief guides to their proofs.


\subsection{Definitions of \kXORSAT{}, \KXORGAME{}, and satisfiability thresholds}

First we must clarify a point of notation.
Henceforth, we shall refer to 
 \KXORGAME{} and to \kXORSAT{}, where always
the $\littlek$ is associated with  XORSAT, and $\ourK$ is associated with XORGAME.
This contrasts with the preface where we took $k=K$ to simplify the exposition.
We shall distinguish between $k$ and $K$ because 
we will use them in some of the same places with different values.
For example, we will see that a \KXORGAME{} matrix consists of $\ourK$ blocks, where each block is a \kXORSAT{} matrix with $\littlek=1$. \cite{pittel2016satisfiability} only deals with XORSAT, so lemmas we cite from there will only use $\littlek$. 
In this paper our approach is to
patch together the analysis of $\ourK$ independent \kXORSAT{} blocks, each with $\littlek = 1$.

\subsubsection{\KXORGAME{}, \kXORSAT{}, and 2-cores.}
\label{sec:def-3-game-and-2-cores}

Fix an integer $m\geq 2$ and vector $\bn = (n_1,\ldots,n_\ourK)$ of non-negative integers. Let \begin{align*}
    \mathdf{\sigmn} = \sum_{j=1}^\ourK n_j = n_1 + \cdots + n_\ourK.
\end{align*}
A \df{\KXORGAME{} matrix} is a matrix 
$\Gamma= \As 
\in \z2^{m\times \sigmn}$,
where $A_j \in \z2^{m\times n_j}$ are blocks with $1$ one in each row (the rest of the entries being zero).
If $n_1=n_2 = \cdots = n_\ourK$, we call these \df{uniformly-tiled \KXORGAME{} matrices}.

The key problem is solving 
linear equations over the binaries:
\begin{quote}
Given a \KXORGAME{} matrix $\Gamma \in \z2^{m \times \sigmn}$
and
a vector $s \in \z2^{m}$, solve
\begin{align*}
     \Gamma x = s
\end{align*}
over $x \in \z2^{\sigmn}$.
\end{quote}
Call the pair $(\Gamma, s)$ a 
\df{game equation}.
As motivation, we state loosely that a game equation
$(\Gamma,s)$ defines a `\KXORGAME{}'.
That game has a `perfect strategy' if and only if there exists 
a solution $x$ to the linear equations $\Gamma x = s$.
For  detailed classical 
definitions (and for introduction to the quantum situation) see
\cite{perez2008unbounded} \cite{brietVidick2012boundsXOR}
or the introduction of 
\cite{watts20203xor}, which is a bit more expository.
The classical \kXORSAT{} linear systems have a similar form.
Call $\Gamma \in \z2^{m\times \sigmn} $ 
a \df{\kXORSAT{} matrix} provided each row has exactly $\littlek$ ones
on it; informally stated, there is no $\As$
partitioning.

\bs 

Next we consider degenerate cases of this linear algebra problem. Let the degree of a column of a boolean matrix be the number of ones in the column.

\ben\item 
If a column of $\Gamma$ has degree 0, then that column does not influence whether or not a solution of the equation exists; so the column could be deleted.
\item 
If a column of $\Gamma$ has degree 1, then that column corresponds to an unknown $x_j$ on which there is only one constraint; hence we could solve for
$x_j$ to eliminate that constraint. Thus we eliminate the 
column and row (including the corresponding entry in $s$) to get a new system which is solvable if and only if the original linear equations are
solvable.
\een
This leads us to define a class of matrices, which yield a non-degenerate solvability problem.

Define a \df{2-core matrix} (non-degenerate matrix) to be a matrix where each column has at least $2$ ones. 
Let the \df{2-core} of a matrix $\Gamma$, denoted $\fin \Gamma$,
be the largest submatrix of $\Gamma$ 
with the degree of every column at least 2, such that every row in $\fin \Gamma$ 
has the same number of ones as the corresponding row in $\Gamma$.
It is well-known that each matrix has one unique 2-core, as well as that the linear system $\Gamma x = s$ has a solution iff 
the corresponding 2-core equations 
$ \fin \Gamma y  = \fin s$
have a solution, see e.g. Section 7 of \cite{pittel2016satisfiability}.

\subsubsection{Randomly-generated game linear equations}

Fix a size $m,\bn= (n, \dots, n)$, and randomly generate a uniformly-tiled \KXORGAME{} matrix 
$\Gamma = \As$ with $A_j \in \R^{m\times n}$;
here we use the uniform probability on the set of \KXORGAME{} matrices. Also, generate $s \in \z2^{m}$ uniformly from the set of such vectors, to get a randomly-generated set of binary equations
$\Gamma x = s$. A goal is to understand the probability that there is a solution to these equations. This probability is heavily dependent on the ratio $\frac{m}{\ourK n }$ of constraints to unknowns.

Indeed, a dramatic piece of structure, which one sees in similar situations (\cite{dBM-FOCS}, \cite{broder1993}), \cite{pittel2016satisfiability} is there exists some \df{satisfiability threshold} $\cThreshK$ such that
\begin{itemize}
    \item If $\frac{m}{n} > \cThreshK$, then \aas{}, an $m\times \ourK n$ \KXORGAME{} problems have a solution in $\bbZ$ mod 2, and 
    \item If $\frac{m}{n} < \cThreshK$, then \aas{}, an $m\times \ourK n$ \KXORGAME{} problems have no solution in $\bbZ$ mod 2.
\end{itemize}

In this \paper{}, an event that occurs \df{asymptotically almost surely (a.a.s)} is one whose probability depends on $m,n$ and goes to 1 as $m,n\to\infty$ with a certain $\lim m/n$.

We will use {\bf big-O notation $O(\cdot)$} and {\bf little-o notation $o(\cdot)$}, typically in the context of a sequence of pairs $(m,n)$ with $m,n\to\infty$. We write $f(m,n) = O(g(m,n))$ to denote the existence of some $M>0$ such that $\abs{f(m,n)} \leq M \abs{g(m,n)}$ for all but finitely many pairs $(m,n)$. Likewise, we write $f(m,n) = o(1)$ to mean $\lim f(m,n) = 0$ as $m,n\to\infty$. Hence $f(m,n) = g(m,n)(c + o(1))$ implies $\lim f(m,n)/g(m,n) = c$.

\subsection{Main theorems}
\label{sec:mainThms}

To give a formula for the satisfiability threshold $\cThreshK$, we introduce some functions.

Define the function $Q$, strictly increasing (as in Claim 3.1 of \cite{pittel2016satisfiability}) and continuous on the reals, by
\begin{align*}
    \mathdf{Q(z)} \coloneqq \frac{z(e^z-1)}{e^z-1-z}.
\end{align*}
For each integer $K\geq 2$, let
\begin{align*}
   \mathdf{h_{\ourK}(\mu)} &\coloneqq \frac{\mu}{(e^{-\mu} (e^\mu - 1))^{\ourK - 1}}, &
    \mathdf{\cThreshK} &\coloneqq h_\ourK(Q^{-1}(\ourK)).
\end{align*}
\begin{table}[H]
    \centering
    \begin{tabular}{c||c|c|c|c|c|c|c}
        $\ourK$ & 3 & 4 & 5 & 6 & 7 & 8 & 9 
        \\ \hline
        $\cThreshK$ & 2.75381 & 3.90708 & 4.96219 & 5.98428 & 6.99345 & 7.99728 & 8.99888 
        \\ \hline
        $\cThreshK/\ourK$ & 0.917935 & 0.97677 & 0.992438 & 0.99738 & 0.999064 & 0.99966 & 0.999876 
    \end{tabular}
    \caption{Example: computed values of $\cThreshK$ for $\ourK \in \{3,4,\ldots,9\}$.}
    \label{tab:nb-computed-cThreshK}
\end{table}

\begin{theorem}
    \label{thm:main-thm-satisfiability}

    Suppose $\ourK \geq 3$. Let $m,n\to\infty$ such that $\lim m/n$ exists, and consider for each $(m,n)$ the probability that $\Gamma x = s$ is satisfiable when $(\Gamma,s)$ is generated uniformly at random on the space of uniformly-tiled \KXORGAME{} equations of this size ($m$ rows, $\ourK$ blocks of $n$ columns each).
    \begin{enumerate}
        \item If $2 < \lim m/n < \cThreshK$, then $\Gamma x = s$ is asymptotically almost surely (\aas{}) satisfiable. \label{it:main-thm-aas-sat}
        \item If $\lim m/n > \cThreshK$, then $\Gamma x = s$ is \aas{} unsatisfiable. \label{it:main-thm-aas-unsat}
    \end{enumerate}
    The satisfiability and unsatisfiability probabilities can be obtained in the proof in \Cref{sec:proofFinale}, in terms of notation used in the proof.
\end{theorem}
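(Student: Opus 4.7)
The plan is to reduce the problem to the 2-core equations and then analyze their satisfiability via moment methods, following the Pittel-Sorkin roadmap for \kXORSAT{} but adapted to respect the $\ourK$-block partition. First, I would show that the 2-core peeling preserves the \KXORGAME{} block structure: since each row of $\Gamma$ contains exactly one $1$ per block, deleting a row (as required when peeling a degree-1 column) removes one $1$ from each of the $\ourK$ blocks simultaneously, so $\fin\Gamma$ is again a \KXORGAME{}-type matrix with one $1$ per block per row, but with possibly unequal per-block column counts $\fin n_1, \ldots, \fin n_\ourK$. The reduction principle of Section 7 of \cite{pittel2016satisfiability} then gives that $\Gamma x = s$ is satisfiable iff the 2-core equation $\fin \Gamma y = \fin s$ is.

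Second, I would track the peeling dynamics to pin down the size and degree statistics of $\fin \Gamma$. Conditional on the per-block column-degree sequences, the uniform \KXORGAME{} distribution is given by a block-wise configuration model. Running peeling in continuous time and applying the differential equation method yields Poisson-type asymptotics: the column degrees in each block are asymptotically i.i.d.\ Poisson$(\mu_t)$ conditioned on being $\geq 2$ at termination, governed by a single common parameter $\mu_t$ evolving by an ODE whose terminal state satisfies $Q(\mu_\infty) = c$ with $c = \lim m/n$. The terminal row-to-(per-block)-column ratio is $\fin m / \fin n_\alpha \to h_\ourK(\mu_\infty)$, and $\cThreshK = h_\ourK(Q^{-1}(\ourK))$ is the critical value at which this ratio crosses $\ourK$.

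Third, in the unsatisfiability regime $c > \cThreshK$, the 2-core has more equations than unknowns by a $\Theta(n)$ margin; a first-moment bound on $|\{y : \fin\Gamma y = \fin s\}|$ gives $2^{-\Theta(n)}$ expected solutions per $\fin s$ and hence \aas{} unsatisfiability. In the satisfiability regime $2 < c < \cThreshK$, I would apply a second-moment argument: the sum over ordered pairs of 2-core solutions factorizes across the $\ourK$ blocks but is coupled through the shared row structure, and the dominant contribution comes from pairs agreeing on a $1/2$-fraction of coordinates in every block. A Laplace / KL-divergence computation then bounds the second moment by $O(1)$ times the first moment squared, and Paley-Zygmund (after conditioning on $\fin s$) yields \aas{} satisfiability.

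The main obstacle is this second-moment / block-coupling step. Unpartitioned \kXORSAT{} reduces to a one-dimensional optimization over the pairwise agreement fraction, but the \KXORGAME{} version becomes a $\ourK$-dimensional variational problem, one fraction per block, subject to the constraint that row patterns be of the restricted partitioned form. The argument must show that the multivariate rate function has a strict unique maximum at the symmetric point $(\tfrac12, \ldots, \tfrac12)$ and that the partition constraint does not shift this extremum or change the sub-exponential correction beyond a computable constant. This is where the bulk of the technical work is expected to lie.
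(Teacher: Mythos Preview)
Your overall architecture---pass to the 2-core, cite \cite{botelho2012cores} for its size, then do first/second moment on the core---is the same as the paper's. Two substantive divergences are worth flagging.

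\textbf{The fixed-point formulas are transposed.} In your second step you write that the terminal peeling parameter satisfies $Q(\mu_\infty)=c$ and that $\fin m/\fin n_j\to h_\ourK(\mu_\infty)$. It is the other way around: the parameter $\mu=\muK(c)$ is defined by $h_\ourK(\mu)=c$, and the core's row-to-per-block-column ratio is $\fin m/\fin n_j\to Q(\mu)$ (see \Cref{lem:bwz-core-reduction}). With your swap, the ratio would cross $\ourK$ at $c=\ourK$ rather than at $c=\cThreshK$, so the threshold identification in your sketch does not actually go through as written.

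\textbf{The second-moment step is organized differently, and the paper's route dodges your ``main obstacle.''} You propose to count ordered pairs of core solutions, parametrized by the $\ourK$ per-block overlap fractions, and you correctly anticipate a $\ourK$-dimensional variational problem whose strict maximum at $(\tfrac12,\dots,\tfrac12)$ must be established. The paper never faces this. Instead it uses Kolchin's identity (\Cref{lem:relate-exX-and-exN}): with $s$ uniform and independent of $\Gamma$, one has $\Ex[N^2]/\Ex[N]^2 = 1 + \Ex[X]$ where $X$ is the number of non-empty \emph{critical row subsets} of $\Gamma$. This recasts the second moment as a sum over a single integer $\ell$ (the cardinality of the row subset), and the $\ourK$-block structure enters only as a product $\prod_{i=1}^\ourK$ of one-block factors sharing the same $\ell$ (\Cref{prop:ZbdJ}). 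The resulting optimization is over the scalar $\alpha=\ell/m$ and is handled by the analysis of $\JK(\alpha,\zzz;c)$ in \Cref{sec:Jneg}. So the $\ourK$-dimensional coupling you flag as the crux simply does not appear. Relatedly, there is no ``conditioning on $\fin s$'' followed by Paley--Zygmund; the identity in \Cref{lem:relate-exX-and-exN} requires $s$ to be random, and the conclusion is obtained directly from $\Ex[X]\to 0$.

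Your column-overlap approach could in principle be pushed through (it is how several $k$-SAT/coloring second moments are done), but here the row-side reformulation is a genuine simplification you are missing.
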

\begin{proof}
    The bulk of the paper works towards proving the corresponding \Cref{thm:main-theorem-core-sat} 
    which specializes to 2-core matrices.
    
    \Cref{sec:translating-2-cores} translates   \Cref{thm:main-theorem-core-sat}   on 2-cores to the original problem, culminating with the final proof in \Cref{sec:proofFinale}.
    The main part of this translation is based on \cite{botelho2012cores}, which provides the threshold for when the 2-core of a random \KXORGAME{} matrix is empty, as well as their expected sizes.
    This is presented in \Cref{lem:bwz-core-reduction}.
\end{proof}

\begin{theorem}
    \label{thm:main-theorem-core-sat}
    Suppose $\ourK\geq 3$. Let $m,n_1,\ldots,n_\ourK \to \infty$ such that $\lim m/n_j$ exists for each $j \in \{1,2,\ldots,\ourK\}$. Let $\bn = (n_1,\ldots,n_\ourK)$, and consider for each $(m,\bn)$ the probability that $\Gamma x = s$ is satisfiable when $(\Gamma,s)$ is generated uniformly at random in the space of 2-core \KXORGAME{} equations of this size ($m$ rows, $\ourK$ blocks of $n_j$ columns for the $j$th block).
    \begin{enumerate}
        \item\label{it:sat-when-wide} If $2 < \lim m/n_j < \ourK$ for $j\in \{1,2,\ldots,\ourK\}$, then $\Gamma x = s$ is asymptotically almost surely (\aas{}) satisfiable, with satisfiability probability $1-O(m^{2-\ourK})$.
        \item\label{it:unsat-when-tall} If $\lim m/n_j > \ourK$ for $j\in \{1,2,\ldots,\ourK\}$, then $\Gamma x  = s$ is \aas{} unsatisfiable, with satisfiability probability $O(2^{-(m-\sigmn)})$.
    \end{enumerate}

\end{theorem}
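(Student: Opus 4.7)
The plan is to use the first and second moment methods on $Z := \abs{\{x \in \z2^{\sigmn} : \Gamma x = s\}}$. Because $s$ is uniform in $\z2^m$ and independent of $\Gamma$, for each fixed $x$ one has $\Pr[\Gamma x = s] = 2^{-m}$; hence $\Ex[Z] = 2^{\sigmn - m}$ and, averaging the Bernoulli indicator $\mathbf{1}[s \in \mathrm{image}(\Gamma)]$ over $s$, $\Pr[\text{sat}] = \Ex[2^{\mathrm{rank}(\Gamma) - m}]$. Both identities hold whether or not $\Gamma$ is conditioned on being 2-core. Part~(\ref{it:unsat-when-tall}) will then follow immediately from Markov: the hypothesis $\lim m/n_j > \ourK$ forces $\sigmn < m$ with $(m-\sigmn)/m$ bounded below, yielding $\Pr[\text{sat}] \leq \Ex[Z] = 2^{-(m-\sigmn)}$.

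For Part~(\ref{it:sat-when-wide}), the standard expansion $\Ex[Z^2] = 2^{\sigmn - m} \sum_z \Pr[\Gamma z = 0]$ combined with Paley--Zygmund will give
\begin{align*}
    \Pr[\text{sat}] \;\geq\; \frac{\Ex[Z]^2}{\Ex[Z^2]} \;=\; \frac{1}{\Ex[2^{m-\mathrm{rank}(\Gamma)}]},
\end{align*}
reducing the goal to $\Ex[2^{m-\mathrm{rank}(\Gamma)}] = 1 + O(m^{2-\ourK})$. I will stratify $z$ by its per-block weight profile $\bm{w} = (w_1, \ldots, w_\ourK)$ (letting $\alpha_j = w_j/n_j$). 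In the \emph{unconditioned} \KXORGAME{} model the rows of $\Gamma$ are independent, so $\Pr[\Gamma z = 0] = \bigl((1 + \prod_j(1-2\alpha_j))/2\bigr)^m$; the 2-core conditioning introduces a multiplicative correction that I will handle via the configuration-model viewpoint, by comparing the probability that the joint column-degree sequence has all entries $\geq 2$ conditionally on $\Gamma z = 0$ versus unconditionally. Summing over $\bm{w}$ with Stirling produces an integral of $\exp(F(\bm{\alpha}))$ with
\begin{align*}
    F(\bm{\alpha}) \;=\; \sum_{j=1}^{\ourK} n_j H(\alpha_j) \log 2 \;+\; m \log\!\left(\tfrac{1+\prod_j(1-2\alpha_j)}{2}\right),
\end{align*}
whose dominant saddle at $\bm{\alpha} = (\tfrac{1}{2}, \ldots, \tfrac{1}{2})$ contributes exactly $2^{\sigmn-m}$ and supplies the leading $1$ in $1 + O(m^{2-\ourK})$.

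The $O(m^{2-\ourK})$ excess is expected to come from short linear dependencies of $\Gamma$. The expected number of duplicate row pairs in the \ourK-partite matrix is $\binom{m}{2}$ times the \ourK-fold column-coincidence probability $\Theta(\prod_j n_j^{-1}) = \Theta(m^{-\ourK})$, which equals $\Theta(m^{2-\ourK})$ in the relevant regime; each such pair drops $\mathrm{rank}(\Gamma)$ by one and thus doubles $2^{m-\mathrm{rank}}$, and a Poisson approximation then gives $\Ex[2^{m-\mathrm{rank}}] - 1 = \Theta(m^{2-\ourK})$. Dependencies among $\ell \geq 4$ rows (note that $\ell = 3$ cannot sum to zero with the \ourK-partite one-per-block structure) have expected count of order $m^{\ell - (\ell/2)\ourK} = (m^{2-\ourK})^{\ell/2}$, strictly $o(m^{2-\ourK})$ for $\ourK \geq 3$. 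The hypothesis $\lim m/n_j < \ourK$ is what keeps $F$ from developing a non-saddle critical point that could contaminate the leading $2^{\sigmn - m}$. The principal obstacle will be sharpening this Laplace / configuration-model analysis to the precision $O(m^{2-\ourK})$ rather than a crude $o(1)$: the \ourK-partite block structure helps in that each block's configuration model is independent, but requires careful multiplicative combination of the \ourK{} Stirling and 2-core corrections; I would follow the \kXORSAT{} machinery in \cite{pittel2016satisfiability} closely, adapting each step to the \ourK-uniform \ourK-partite setting by factoring the column-degree and coincidence computations block-by-block.
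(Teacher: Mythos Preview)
Your treatment of Part~(\ref{it:unsat-when-tall}) via Markov is correct and is exactly what the paper does.

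For Part~(\ref{it:sat-when-wide}) you have correctly arrived at $\Pr[\text{sat}]\geq 1/\Ex[2^{m-\mathrm{rank}(\Gamma)}]$, but your plan then bifurcates in an unnecessary way. You propose a saddle-point analysis of $\sum_z \Pr[\Gamma z=0]$ stratified by the \emph{column} weight profile of $z$, while separately invoking \emph{row} dependencies as the heuristic source of the $O(m^{2-\ourK})$ excess. In fact the row-dependency picture \emph{is} the whole argument: $2^{m-\mathrm{rank}(\Gamma)}-1$ equals exactly the number $X$ of nonempty critical row subsets (sets of rows summing to $0$ in $\z2$), so $\Ex[2^{m-\mathrm{rank}}]=1+\Ex[X]$ identically. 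The paper (following Kolchin and Pittel--Sorkin) therefore discards the column-$z$ viewpoint entirely and stratifies $X$ by the cardinality $\ell$ of the row set, writing $\Ex[X]=\sum_{\ell=2}^m \Ex[\Zmnl]$ and bounding each term. Your column-$z$ Laplace integral is a legitimate alternative expression for the same quantity, but extracting precision $O(m^{2-\ourK})$ from a saddle at $\bm\alpha=(\tfrac12,\ldots,\tfrac12)$ with configuration-model corrections would be substantially more delicate than the row stratification, where the $\ell=2$ term visibly hands you $\Theta(m^{2-\ourK})$. The Pittel--Sorkin machinery you intend to follow is itself the row-stratification approach, so your proposal would in effect abandon the $F(\bm\alpha)$ setup anyway.

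The real gap in your plan is that your row-dependency heuristic only addresses $\ell=O(1)$. You correctly observe that $\ell=2$ gives $\Theta(m^{2-\ourK})$ and that each fixed even $\ell\geq 4$ gives smaller order, but $\Ex[\Zmnl]$ must also be controlled for $\ell$ of order $m$, where individual probabilities are exponentially small but there are $\binom{m}{\ell}$ candidate row sets. This is where the hypothesis $\lim m/n_j<\ourK$ actually enters, and where the bulk of the paper's effort goes: one shows $\Ex[\Zmnl]\leq O(1)\,\ell^{(\ourK-1)/2}\exp\bigl(m\sum_j \JK(\ell/m,\zzz;m/n_j)\bigr)$ for an explicit function $\JK$ built from the $2$-core generating function, and then must prove $\JK(\alpha,\cdot\,;c)<0$ (with a quantitative margin) uniformly for $\alpha\in(0,1]$ and $c\in(2,\ourK)$. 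That negativity analysis---the analogue of Pittel--Sorkin's Claims~5.1--5.3, requiring separate treatments for $\alpha$ near $0$, $\alpha$ in a middle range (handled partly by interval arithmetic), and $\alpha$ near $1$---is the principal obstacle. The short-$\ell$ counting you sketched is the easy end of the sum.
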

\begin{proof}[Proof Outline and Reader's Guide]
    This is an analog to Theorem 1.1 and part of Theorem 1.2 of \cite{pittel2016satisfiability}.

    \Cref{it:unsat-when-tall} is proven easily by the first-moment method; see Remark 1 of \cite{pittel2016satisfiability}.
    
    \Cref{it:sat-when-wide} uses the ``second-moment method" and key to implementing it here are critical row sets.
    A \df{critical row set} is a collection of rows whose sum is a vector equal to zero modulo 2.
    The proof proceeds as follows. 
    
    If $\Gamma$ is uniformly randomly generated on the space of 2-core \KXORGAME{} equations of the size $(m,\bn)$, then let the random variable $\Zmnl$ denote the number of non-empty critical row subsets of $\Gamma$ of cardinality $\ell$, and let the random variable $\mathdf{N_{m,\bn}}$ denote the number of solutions $x$ to $\Gamma x = s$.  Then the number of non-empty critical row subsets of $\Gamma$ is given by $\sum_{\ell=2}^m \Zmnl$, so
    \Cref{lem:relate-exX-and-exN} together with a second moment inequality provides
    \begin{align*}
        \mathdf{\Pr(N_{m,\bn} \geq 1)} \geq \frac{\Ex[N_{m,\bn}]^2}{\Ex[N_{m,\bn}^2]} = \frac{1}{1 + \sum_{\ell=2}^m \Zmnl}.
    \end{align*}
    \Cref{thm:sum-upper-bound-O}, whose proof
    consumes 
    \Cref{sec:bdByexpJ,sec:Jneg,sec:boundOnExpect}
    gives\allowUnusedRef{sec:bdByexpJ}\allowUnusedRef{sec:Jneg}\allowUnusedRef{sec:boundOnExpect}
    \begin{align*}
        \mathdf{\sum_{\ell = 2}^m \Ex[\Zmnl]}
        = O(m^{2-\ourK}).
    \end{align*}
    Thus
    \begin{align*}
        \Pr(N_{m,\bn} \geq 1) = \frac{1}{1 + O(m^{2-\ourK})} = 1 - O(m^{2-\ourK}),
    \end{align*}
    which tends to 1 as $m \to \infty$.
    Our detailed proof which fills in the outline just given concludes in \Cref{sec:proofFinale}.

This proof follows the outline in \cite{pittel2016satisfiability}
    but with issues and  formulas which are different in substantial ways causing their own concerns.
    
\end{proof}

\subsection{Hypergraph interpretation}

There are two ways to view and  state the main results of this paper: one is with matrices, and another is with hypergraphs. The predominate perspective we use here is with matrices, although late in the paper,
\Cref{sec:translating-2-cores}, we switch to hypergraph 
language, to be easily compatible with 
the terminology in \cite{botelho2012cores}.

Of possible interest, we shall additionally give a new proof of the ``Maintenance of Uniformity'' principle.

\section{Definitions of random variables related to critical row sets}

Here, we formally define 
the random variables $\Ymnkl$ and $\Zmnl$ used throughout the paper.

\subsection{Critical row sets vs second moment theorem}

Now comes a basic and powerful observation.

\begin{lemma}
    \label{lem:relate-exX-and-exN}    
    Let $A\in \{0,1\}^{m\times n}$ be a random variable with any given distribution, and let $s$ be independent of $A$ and uniformly distributed over $\{0,1\}^m$. Let $N$ denote the number of binary solutions to $Ax = s$, and let $X$ denote the number of non-empty critical row subsets of $A$. Then \begin{align*}
        \Ex[N^2]/\Ex[N]^2 = \Ex[X]+1.
    \end{align*}
\end{lemma}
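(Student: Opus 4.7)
The plan is to work entirely via linear algebra over $\z2$ and to reduce both $\Ex[N]$ and $\Ex[N^2]$ to functions of $\operatorname{rank}(A)$, after which the critical-row-set interpretation drops out of the dimension formula for the left kernel. First I would observe that conditional on $A$, the random variable $N$ equals $|\ker A| = 2^{n-\operatorname{rank}(A)}$ when $s \in \operatorname{im} A$ and equals $0$ otherwise. Since $s$ is independent of $A$ and uniform on $\z2^m$, $\Pr(s \in \operatorname{im} A \mid A) = 2^{\operatorname{rank}(A)-m}$, which gives
\[
\Ex[N\mid A] \;=\; 2^{\operatorname{rank}(A)-m}\cdot 2^{n-\operatorname{rank}(A)} \;=\; 2^{n-m},
\]
independent of $A$, and hence $\Ex[N] = 2^{n-m}$.

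Next I would repeat this for $N^2$: on the event $s \in \operatorname{im} A$ we have $N^2 = 2^{2n - 2\operatorname{rank}(A)}$, so
\[
\Ex[N^2\mid A] \;=\; 2^{\operatorname{rank}(A)-m}\cdot 2^{2n-2\operatorname{rank}(A)} \;=\; 2^{2n-m-\operatorname{rank}(A)}.
\]
Dividing by $\Ex[N]^2 = 2^{2n-2m}$ and taking expectations in $A$ gives
\[
\frac{\Ex[N^2]}{\Ex[N]^2} \;=\; \Ex_A\!\bigl[2^{\,m-\operatorname{rank}(A)}\bigr].
\]

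The final step is to identify this quantity combinatorially. By the rank-nullity theorem applied to $A^T$, the number of vectors $u \in \z2^m$ with $u^T A = 0$ is $2^{m-\operatorname{rank}(A)}$. Such vectors are in bijection with subsets $S\subseteq\{1,\dots,m\}$ whose rows of $A$ sum to zero mod $2$, i.e.\ with critical row subsets, including the empty one. Therefore $2^{m-\operatorname{rank}(A)} = X + 1$ pointwise in $A$, and taking expectations yields $\Ex[N^2]/\Ex[N]^2 = \Ex[X]+1$.

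There is no real obstacle here; the only point worth being careful about is that $\Ex[N\mid A]$ is constant in $A$ while $\Ex[N^2\mid A]$ is not, which is exactly what makes the rank-dependent factor $2^{m-\operatorname{rank}(A)}$ survive and admit its reinterpretation as $1+X$. The argument uses nothing about the distribution of $A$ beyond independence from $s$, so the lemma holds in the stated generality.
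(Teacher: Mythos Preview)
Your proof is correct and complete. The paper itself does not give an argument here: it simply cites Remark~2 of Pittel--Sorkin \cite{pittel2016satisfiability}. Your computation---conditioning on $A$, reducing $\Ex[N\mid A]$ and $\Ex[N^2\mid A]$ to expressions in $\operatorname{rank}(A)$, and then identifying $2^{m-\operatorname{rank}(A)}$ with the cardinality of the left kernel and hence with $X+1$---is the standard linear-algebra proof and is essentially what that remark sketches. Nothing is missing; in particular your observation that $\Ex[N\mid A]=2^{n-m}$ is deterministic while $\Ex[N^2\mid A]$ retains the rank dependence is exactly the crux, and the pointwise identity $2^{m-\operatorname{rank}(A)}=X+1$ (rather than merely its expectation) is the cleanest way to finish.
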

\begin{proof}
    This is Remark 2 from Pittel-Sorkin \cite{pittel2016satisfiability}.
\end{proof}

Thus $\Ex[X] \to 0$ is sufficient to show $\Ex[N^2]/\Ex[N]^2 \to 1$. This connects the second moment method with the critical set approach of Kolchin \cite{kolchinRandomGraphs}. To apply this method, we first need to introduce some formal notation.

\subsection{Re-definition of game matrix in terms of $\cA$}

\label{sec:def-Psimn}

Let $\cA_{m,n,k}$ be the set of all $m\times n$ boolean 2-core matrices with $k$ ones per row. For $\cA_{m,n,k}$ to be non-empty, it is necessary that $km \geq 2n$. 
To see this observe each
matrix in $\cA_{m,n,k}$ has $m$ rows each with $k$ one, so it has exactly $km$ ones total in the matrix. At the same time, each of the $n$ columns must have at least $2$ ones, so the matrix must have at least $2n$ ones total. Hence $km\geq 2n$.

Fix $\bn = (n_1,\ldots,n_\ourK)$. Let $\mathdf{\Psimn}$ be the set of \df{2-core \KXORGAME{} matrices} 
which are block matrices  $\Gamma = (A_1\ \cdots\ A_\ourK)$ with blocks $A_j\in \cA_{m,n_j,1}$ for $j \in \{1,2,\ldots,\ourK\}$. Note here we set $\littlek=1$ since each block needs exactly $1$ one per row.
To ensure $\Psimn$ is nonempty, $m\geq 2n_j$ must hold (for each $j\in \{1,\ldots,\ourK\}$) due to the similar constraint to ensure $\cA_{m,n,k}$ is nonempty.
To ensure
$\Psimn$ is nonempty, $m\geq 2n_j$ must hold (for each $j\in \{1,\ldots,\ourK\}$) due to the similar constraint to ensure $\cA_{m,n,k}$ is nonempty.

We shall consider 
probability spaces
$\cA_{m,n,k}$ and $\Psimn$ with the uniform distribution.

\subsection{Definition of $X$ and the random variables $\Ymnkl$ and $\Zmnl$}
\label{sec:random-variables-YZ}

For a matrix $\Gamma$, let $X(\Gamma)$ be the number of non-empty critical row subsets of $\Gamma$.
Furthermore, for $\ell \geq 1$, let $\mathdf{X^{(\ell)}(\Gamma)}$ denote the number of critical row subsets of $\Gamma$ with cardinality $\ell$. If every row of $\Gamma$ is nonzero, then $X^{(1)}(\Gamma) = 0$, and 
\begin{align*}
    \mathdf{X(\Gamma)} = \sum_{\ell=2}^m X^{(\ell)}(\Gamma).
\end{align*}

For $m,n,k\geq 1$ with $km \geq 2n$ and $n \geq k$, 
define the random variable
$\Ymnkl\colon \cA_{m,n,\littlek} \to \bbZ_{\geq0}$
by 
\begin{align*}
    \mathdf{\Ymnkl}(A) = X^{(\ell)}(A) 
    \foralldisplay{A \in \cA_{m,n,\littlek}, \ell \in \{1,\ldots,m\}},
\end{align*}
that is,
the number of critical row sets of $A$ of cardinality $\ell$.

Likewise, fix $m,\bn = (n_1,\ldots,n_\ourK)$ with $m \geq 2n_j$ and  $\ell \in \{1,\ldots,m\}$.
Define the random variable $\Zmnl\colon \Psimn \to \bbZ_{\geq0}$ 
by
\begin{align*}
    \mathdf{\Zmnl}(\Gamma) = X^{(\ell)}(\Gamma) 
    \foralldisplay{\Gamma \in \Psimn},
\end{align*}
that is,
the number of critical row sets of $\Gamma$ of cardinality $\ell$.

\section{Upper bounds on expected number of critical row sets in a 2-core}
\label{sec:bdByexpJ}

This section is devoted to producing an upper bound on 
$\Ex[\Zmnl]$.
This bound is an analog of 
Lemma 4.1 of \cite{pittel2016satisfiability}.
Our proof follows its approach, with somewhat different functions and additional
arguments.
For $c > 2$, $\alpha \in [0,1]$, and $\zzz = (\zeta_1,\zeta_2) \in \R_{>0}^2$, define $\mathdf{\JK(\alpha,\zzz;c)}$ by
\begin{align}
    \begin{split}
        \mathdf{\JK(\alpha,\zzz;c)} &= \frac{1}{\ourK} H(\alpha) + \alpha \ln(\alpha/\zeta_1) + \ov\alpha \ln(\ov\alpha / \zeta_2)
        \\&\ + \frac{1}{c} \ln \frac{\expMTwo(\lambda \cdot (\zeta_2 + \zeta_1)) + \expMTwo(\lambda \cdot (\zeta_2 - \zeta_1))}{2 \expMTwo(\lambda)},
    \end{split}
    \label{def:Jkm}
\end{align}
where
\begin{align*}
    \mathdf{H(\alpha)} &\coloneqq -\alpha \ln \alpha - \ov\alpha\ln \ov\alpha, \quad & \mathdf{\ov\alpha} &= 1-\alpha
    \\ \mathdf{\expMTwo(z)} & \coloneqq e^z - 1 - z, & \quad \mathdf{\lambda} &\coloneqq Q^{-1}(c).
\end{align*}
Consistent with continuity, we define $0\ln(0)=0$. 
Throughout this section, we take $O(1)$ to mean some constant that may depend on the choice of function $\zzz$, but does not depend on $m,n,k$ or $\ell$.

The bound we shall prove is stated next.

\begin{proposition}
\label{prop:ZbdJ}
    Suppose $m,n_1,\ldots,n_\ourK\to \infty$ with $\lim m/n_i \in (2, \infty)$ for $i\in \{1,\ldots,\ourK\}$. For $\ell \in \{1,\ldots,m\}$, 
    with $c_i=m/n_i, \alpha=\ell/m, $ and any piecewise continuous function $\zzz \colon [0,1] \to \R_{>0}^2$ written as $\mathdf{\zzz(\alpha)} = (\zeta_1(\alpha),\zeta_2(\alpha))$,
    we have
    \begin{align*}
        \Ex[\Zmnl] &\leq O(1) \frac{(\ell)^{(\ourK-1)/2}}{\sqrt{(\zeta_2(\alpha))^\ourK}} \exp(m \sum_{i=1}^\ourK \JK(\alpha,\zzz(\alpha);c_i)).
    \end{align*}
\end{proposition}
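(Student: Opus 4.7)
The plan is to follow the generating-function/saddle-point template of Lemma 4.1 of \cite{pittel2016satisfiability}, adapted to the block structure of a \KXORGAME{} matrix. First I would exploit block independence: under the uniform law on $\Psimn$ the blocks $A_1,\ldots,A_\ourK$ are independent, and a row set is critical iff each block restricted to it has all column-degrees even. By row-symmetry and standard EGF counting,
\[
\Ex[\Zmnl]
= \binom{m}{\ell}^{1-\ourK}\prod_{j=1}^{\ourK}\frac{[x^\ell y^{m-\ell}]F_j(x,y)}{[x^m]\expMTwo(x)^{n_j}},
\qquad F_j(x,y)=(\cosh(x)e^y-1-y)^{n_j},
\]
where $x$ and $y$ track contributions from the first $\ell$ and last $m-\ell$ rows respectively; the ``$\cosh$'' enforces that each column of the $[\ell]$-restriction has even degree. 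The algebraic identity $\cosh(x)e^y-1-y=\tfrac12[\expMTwo(y+x)+\expMTwo(y-x)]$ is what will make $F_j$ match the logarithm appearing in $\JK$.

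Second, I would apply saddle-point asymptotics to each denominator at $\lambda_j=Q^{-1}(c_j)$, giving the standard estimate $[x^m]\expMTwo(x)^{n_j}\sim \expMTwo(\lambda_j)^{n_j}/(\lambda_j^m\sqrt{2\pi m\sigma_j^2})$ with $\sigma_j^2$ depending only on $c_j$, and I would bound each numerator by the Cauchy inequality after substituting $x\leftarrow\lambda_j\zeta_1(\alpha)$, $y\leftarrow\lambda_j\zeta_2(\alpha)$. Plugging the identity in, the ratio of each block's numerator and denominator takes the form $\bigl(\tfrac{\expMTwo(\lambda_j(\zeta_2+\zeta_1))+\expMTwo(\lambda_j(\zeta_2-\zeta_1))}{2\expMTwo(\lambda_j)}\bigr)^{n_j}/(\zeta_1^\ell\zeta_2^{m-\ell})$ --- exactly the $\JK$-shape, with the $\lambda_j^m$ factors cancelling. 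To get the target's polynomial prefactor rather than the weaker Cauchy one, I would sharpen the numerator bound by recognizing $[x^\ell y^{m-\ell}]F_j\cdot\zeta_1^\ell\zeta_2^{m-\ell}/F_j(\zeta_1,\zeta_2)$ as the probability that a sum of $n_j$ i.i.d.\ $\zeta$-tilted random vectors hits $(\ell,m-\ell)$, and then applying a one-dimensional local CLT to just the $y$-marginal; since that marginal is of truncated-Poisson type with variance on the order of $\zeta_2(\alpha)$, it supplies an extra $O(1/\sqrt{n_j\zeta_2(\alpha)})$ per block. Taken across the $\ourK$ blocks this both yields the target's $1/\sqrt{\zeta_2(\alpha)^\ourK}$ and absorbs the $\sqrt{m}^{\ourK}$ that would otherwise arise from the denominators' saddle-point prefactors.

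Finally, Stirling on $\binom{m}{\ell}^{1-\ourK}$ contributes a polynomial prefactor of $(m\alpha\ov\alpha)^{(\ourK-1)/2}\sim \ell^{(\ourK-1)/2}$ (the $\ov\alpha^{(\ourK-1)/2}$ is bounded and absorbed into $O(1)$) together with an exponential $e^{(1-\ourK)mH(\alpha)}$, which combines with the $\alpha\ln\alpha+\ov\alpha\ln\ov\alpha=-H(\alpha)$ produced by the substitution $\alpha\mapsto\alpha/\zeta_1$, $\ov\alpha\mapsto\ov\alpha/\zeta_2$ inside each $\JK$ to yield the $H(\alpha)/\ourK$ contribution per summand. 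The main obstacle I expect is the careful bookkeeping of polynomial prefactors: matching the saddle-point constants in the denominators cleanly against the local-CLT constants in the numerators, and ensuring the bound is uniform over all $\ell\in\{1,\dots,m\}$ (in particular as $\alpha$ approaches the endpoints $0,1$) and over the piecewise-continuous choice of $\zzz$. I expect this accounting, as in Pittel--Sorkin, to occupy most of the technical pages that follow.
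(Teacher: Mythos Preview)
Your approach is correct and shares the paper's key decomposition
\[
\Ex[\Zmnl]=\binom{m}{\ell}^{1-\ourK}\prod_{i=1}^{\ourK}\Ex[Y_{m,n_i,1}^{(\ell)}],
\]
together with the Stirling bound on $\binom{m}{\ell}^{1-\ourK}$ and the repackaging via $\JK=\frac{1-\ourK}{\ourK}H(\alpha)+\frac{1}{c}H_1(\alpha,\zzz;c)$. The difference is purely one of modularity: where you re-derive the single-block bound from scratch via the EGF $(\cosh(x)e^y-1-y)^{n_j}$, saddle-point on the denominator, and a local-CLT sharpening on the numerator, the paper simply invokes Pittel--Sorkin's Lemma~4.1 as a black box applied with $k=1$ (after observing that its proof goes through for $k\geq 1$, not just $k\geq 3$). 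That lemma already delivers $\Ex[Y_{m,n_i,1}^{(\ell)}]\leq O(1)\,\zeta_2(\alpha)^{-1/2}\exp(n_iH_1(\alpha,\zzz;c_i))$, so all of the generating-function work, the saddle-point prefactor bookkeeping, and the uniformity-in-$\ell$ concerns you flag as the main obstacle are absorbed into the citation. Your route would reproduce that lemma's content inline; the paper's route buys a proof of a few lines rather than a few pages.
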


\subsection{Proof of this bound on 
 $\Ex[\Zmnl] $}
We begin our proof by stating 
a slight strengthening of 
\cite{pittel2016satisfiability} Lemma 4.1 derived 
for \kXORSAT. 
It gives the upper bound on 
the expectation $\Ex[Y_{m,n,\littlek}^{(\ell)}]$ of the critical set random variable.

To state it, we first define $\Hk$,
for $\alpha \in [0,1]$, $c>2$, and any $\zzz=(\zeta_1,\zeta_2)$ \index{$\zeta_1$,  $\zeta_1$} with $\zeta_1>0$ and $\zeta_2>0$, by 
\begin{align}
\begin{split}
    \Hk(\alpha,\zzz;c) &\coloneqq cH(\alpha) + c\littlek (\alpha \ln(\alpha/\zeta_1) 
    + c\littlek  (\ovalpha) \ln((\ovalpha ) / \zeta_2)
    \\&\ + \ln \frac{\expMTwo(\lambda \cdot (\zeta_2 + \zeta_1)) + \expMTwo(\lambda \cdot (\zeta_2 -  \zeta_1))}{2 \expMTwo(\lambda)},
    \\ \lambda &\coloneqq Q^{-1}(ck).
    \label{def:Hk}
\end{split}
\end{align}
This is the same as the definition for $\Hk$ in Equation 4.2 of \cite{pittel2016satisfiability}, though there are the notational differences: our $\expMTwo$ is their $f$ and our $Q$ is their $\psi$.

\begin{lemma}
    \label{lem:initial-upper-bound-exY-in-m}
    Suppose $k \geq 1$ and $m,n\to \infty$ 
    with $\lim m/n \in (2/k, \infty)$. For $\ell \in \{1,\ldots,m\}$ and $\mathdf{\alpha} = \ell/m$, with any piecewise continuous function $\zzz \colon [0,1] \to \R_{>0}^2$ written as $\mathdf{\zzz(\alpha)} = (\zeta_1(\alpha),\zeta_2(\alpha))$, we have
    \begin{align*}
        \mathdf{\Ex[Y_{m,n,k}^{(\ell)}]} \leq O(1) \frac{1}{\sqrt{\zeta_2(\alpha)}} \exp[n \Hk(\alpha,\zzz;m/n)].
    \end{align*}
\end{lemma}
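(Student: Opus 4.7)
The plan is to follow the proof of Lemma 4.1 of \cite{pittel2016satisfiability} essentially verbatim, with the only modification being to keep the $\zeta_2$-dependent factor explicit rather than absorbing it into the $O(1)$. By symmetry of the uniform distribution on $\cA_{m,n,k}$, write $\Ex[Y_{m,n,k}^{(\ell)}] = \binom{m}{\ell}\, p_{m,n,k,\ell}$ where $p_{m,n,k,\ell}$ is the probability that a fixed set $S$ of $\ell$ rows forms a critical set. I then use the configuration model: a matrix in $\cA_{m,n,k}$ is encoded as a bipartite multigraph with row-degrees all equal to $k$ and column-degrees at least $2$, and the total number of configurations with a prescribed column-degree sequence $(d_i)_{i=1}^n$ is $\frac{(km)!}{\prod_i d_i!}$. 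Summing over $d_i \geq 2$ with $\sum d_i = km$ gives $|\cA_{m,n,k}|$ up to a common row-ordering factor and is $(km)!\,[z^{km}]\, \expMTwo(z)^n$.

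For the numerator (configurations where $S$ is critical), split each column's ones into $d_1^{(i)}$ contributed by $S$ and $d_2^{(i)}$ contributed by $S^c$; criticality forces each $d_1^{(i)}$ even, while 2-coreness forces $d_1^{(i)}+d_2^{(i)} \geq 2$. This yields the bivariate generating function
\begin{align*}
    \sum_{\substack{d_1 \text{ even},\, d_2 \geq 0 \\ d_1+d_2 \geq 2}} \frac{\zeta_1^{d_1}\zeta_2^{d_2}}{d_1!\,d_2!} = \cosh(\zeta_1) e^{\zeta_2} - 1 - \zeta_2 = \frac{\expMTwo(\zeta_2+\zeta_1)+\expMTwo(\zeta_2-\zeta_1)}{2},
\end{align*}
which is precisely the kernel appearing in $\Hk$. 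After normalization the probability becomes a ratio of bivariate coefficient extractions from this generating function raised to the $n$th power, divided by the univariate coefficient extraction defining $|\cA_{m,n,k}|$.

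Next I apply the standard saddle-point upper bound $[\zeta_1^{a} \zeta_2^{b}] F(\zeta_1,\zeta_2)^n \leq \zeta_1^{-a} \zeta_2^{-b} F(\zeta_1,\zeta_2)^n$ valid for any $\zeta_1,\zeta_2 > 0$ (this is the point where $\zzz$ enters as a free parameter, and any choice gives a valid bound). For the denominator I use a sharp (rather than upper-bound) saddle-point estimate at $\lambda = Q^{-1}(ck)$, which is the saddle dictated by $z \expMTwo'(z)/\expMTwo(z) = km/n$; this produces the factor $\lambda^{-km}\expMTwo(\lambda)^n$ together with a $\Theta(1/\sqrt{n})$ Gaussian correction. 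Taking logarithms, applying Stirling to $\binom{m}{\ell}$, $(k\ell)!$, and $(k(m-\ell))!$, and grouping terms using the identity $n H_k(\alpha,\zzz;c) = n \cdot \{c H(\alpha) + c k \alpha \ln(\alpha/\zeta_1) + c k \ovalpha \ln(\ovalpha/\zeta_2) + \ln[\cdots/2\expMTwo(\lambda)]\}$ produces the claimed $\exp[n \Hk(\alpha,\zzz;m/n)]$ main term.

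The main obstacle — and the precise point where the "slight strengthening" over \cite{pittel2016satisfiability} must be handled carefully — is keeping the polynomial prefactor $\frac{1}{\sqrt{\zeta_2(\alpha)}}$ explicit. The Stirling corrections from $(k\ell)!$ and $(k(m-\ell))!$ contribute factors like $1/\sqrt{k\ell}$ and $1/\sqrt{k(m-\ell)}$, while the saddle-point Gaussian correction in the denominator contributes $\sqrt{n}$; these combine to $O(1/\sqrt{\ovalpha m})$ in the boundary regime $\alpha \to 1$. The additional $1/\sqrt{\zeta_2(\alpha)}$ factor arises because, when we use the free-parameter bound rather than the true bivariate saddle, a factor of $\zeta_2^{-1/2}$ must be inserted to absorb the quadratic-in-$d_2$ behavior around the natural saddle. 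Concretely, I would redo the contour-integral estimate in \cite{pittel2016satisfiability}(4.3)--(4.7) tracking the $\zeta_2$-dependence in the Hessian determinant, verifying that $\sqrt{\zeta_2(\alpha)}$ is the only place $\zzz$ leaks out of the exponential. Everything else is bookkeeping; the potentially delicate $\alpha \to 0,1$ endpoint behavior is controlled by piecewise continuity of $\zzz$, which allows absorbing any endpoint blowup into the constant $O(1)$.
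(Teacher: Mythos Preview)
You have misidentified what the ``slight strengthening'' over \cite{pittel2016satisfiability} actually is. The factor $1/\sqrt{\zeta_2(\alpha)}$ is \emph{already present} in the statement of Lemma 4.1 of \cite{pittel2016satisfiability}; there is nothing new to track there, and the paragraph you devote to carefully pulling $\zeta_2^{-1/2}$ out of a Hessian determinant is unnecessary (and the mechanism you describe for its appearance is not quite how it arises in their argument either). The paper's proof is a single sentence: the only difference from Pittel--Sorkin's Lemma 4.1 is that their hypothesis $k\geq 3$ is weakened here to $k\geq 1$, and one simply observes that nothing in their proof uses $k\geq 3$.

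Consequently, your proposal has a gap precisely at the one point that needs attention: you never verify that the argument goes through for $k=1$ (and $k=2$). Everything you wrote is, in outline, a reconstruction of the Pittel--Sorkin proof for $k\geq 3$, so you have reproduced what is already cited without addressing the actual delta. To fix this, drop the $\zeta_2$ bookkeeping and instead walk through the cited proof checking that no step (the configuration-model count, the coefficient-extraction bounds, the saddle choice $\lambda=Q^{-1}(ck)$, and the Stirling/LCLT estimates) requires $k\geq 3$; the only place $k$ enters is through the product $ck$, and the hypothesis $\lim m/n\in(2/k,\infty)$ ensures $ck>2$, which is what the saddle-point analysis needs.
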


\begin{proof}
    The lemma statement here is almost precisely the same as the result of Lemma 4.1 of \cite{pittel2016satisfiability}. 
    The only difference \Cref{lem:initial-upper-bound-exY-in-m} makes is weakening their $\littlek \geq 3$ assumption to $\littlek \geq 1$. The proof in \cite{pittel2016satisfiability} works equally well with $\littlek \geq 1$ as it does with $\littlek \geq 3$, so their argument suffices to prove this lemma.
\end{proof}

We have introduced $\JK$ above to serve for a single block of a \KXORGAME{} matrix; note $\JK$ is a shifted and scaled form of $\Hk$ with $k=1$, precisely
\begin{align}
    \label{eq:Jk-in-H1}
    \JK(\alpha,\zzz;c) = \frac{1 - \ourK}{\ourK} H(\alpha) + \frac{1}{c} \Hone(\alpha,\zzz;c).
\end{align}

\noindent 
{ \it Proof of \Cref{prop:ZbdJ}.}
    The first $\ell$ rows of a random matrix in $\cA_{m,n,1}$ form a critical row set with probability $
        \mathdf{P_{m,n,1}(\ell)} \coloneqq \frac{\Ex[Y_{m,n,1}^{(\ell)}]}{\binom{m}{\ell}}.
   $
    Hence, the first $\ell$ rows of a random matrix in $\Psimn$ form a critical row set with probability $
        \prod_{r=1}^{\ourK} P_{m,n_r,1}(\ell).
    $
    There are $\binom{m}{l}$ sets of rows, each with the same probability of forming a critical row set, so by linearity of expectation,
    \begin{align*}
        \mathdf{\Ex[\Zmnl]} &= \binom{m}{\ell} \prod_{i=1}^{\ourK} P_{m,n_r,1}(\ell)
        = \binom{m}{\ell} \prod_{i=1}^{\ourK} \frac{\Ex[Y_{m,n_i,1}^{(\ell)}]}{\binom{m}{\ell}}
        \\ &= \binom{m}{\ell}^{1-\ourK} \prod_{i=1}^{\ourK} \Ex[Y_{m,n_i,1}^{(\ell)}].
    \end{align*}
    Applying \Cref{lem:initial-upper-bound-exY-in-m} yields, for each $i\in\{1,\ldots,\ourK\}$
    \begin{align*}
        \mathdf{\Ex[Y_{m,n_i,1}^{(\ell)}]} \leq O(1) \frac{1}{\sqrt{\zeta_2(\alpha)}} \exp(n_i \Hone(\alpha,\zzz;m/n_i)).
    \end{align*}
    Hence
    \begin{align*}
        \mathdf{\Ex[\Zmnl]} &\leq \binom{m}{\ell}^{1-\ourK} \prod_{i=1}^{\ourK} O(1) \frac{1}{\sqrt{\zeta_2(\alpha)}} \exp(n_i \Hone(\alpha,\zzz;m/n_i)).
        \\ &\leq \binom{m}{\ell}^{1-\ourK} \frac{O(1)}{\sqrt{(\zeta_2(\alpha))^\ourK}} \exp(\sum_{i=1}^\ourK n_i \Hone(\alpha,\zzz;m/n_i)).
    \end{align*}
    For all $m\geq 1$ and $1\leq \ell \leq m$, we get
    \begin{align*}
        \binom{m}{\ell}^{-1} \leq O(1) \sqrt{\ell} \exp(-mH(\alpha)).
    \end{align*}
    where $\alpha = \ell/m$ and $\mathdf{H(\alpha)}\coloneqq -\alpha \ln(\alpha) - (1-\alpha)\ln(1-\alpha)$ is the entropy function. This is a consequence of Stirling's approximation; for details of the proof, see \Cref{lem:binomial-inv-upper-bound-appendix}.
    Hence
    \begin{align*}
        \binom{m}{\ell}^{1-\ourK} \leq O(1) \ell^{(\ourK-1)/2} \exp(-m(\ourK - 1)H(\alpha)),
    \end{align*}
    so
    \begin{align*}
        \Ex[\Zmnl] &\leq O(1) \frac{(\ell)^{(\ourK-1)/2}}{\sqrt{(\zeta_2)^\ourK}} \exp(m \left((1-\ourK)H(\alpha) + \sum_{i=1}^{\ourK} \frac{n_i}{m} \Hone(\alpha,\zzz;m/n_i)\right)).
    \end{align*}
    Substituting the definition of $\JK$ from \Cref{eq:Jk-in-H1} gives the result,
    \begin{align*}
        \Ex[\Zmnl] &\leq O(1) \frac{(\ell)^{(\ourK-1)/2}}{\sqrt{(\zeta_2(\alpha))^\ourK}} \exp(m \sum_{i=1}^\ourK \JK(\alpha,\zzz(\alpha);m/n_i)).
    \end{align*}
    
\qed

\section{Proving that $\JK $ is negative}
\label{sec:Jneg}

Our overarching 
goal is to prove that $\Ex\left[\sum_{\ell=2}^m \Zmnl\right]$ goes to zero in certain parameter ranges; this is finalized in 
\Cref{thm:sum-upper-bound-O}. To apply the upper bound on this expectation given by
\Cref{prop:ZbdJ}, we need to show 
an inequality 
slightly stronger 
than just $\JK < 0$ holds for $\alpha \in (0,1]$. The needed assertion is in 
\Cref{prop:Jk-upper}, and much of this section works toward proving that proposition.

\subsection{General idea and Reader's Guide}

The analysis of $\JK$ as a function of $\alpha$ in $(0, 1]$ will be split into primarily two intervals $(0,0.99\betaK] \cup [\betaK/2,\,1]$,
where
\begin{align}
    \label{def:beta-k}
    \mathdf{\betaK} &\coloneqq \exp(-\left( \frac{1}{\ourK} + \ln(\sqrt{\ourK-1}) - 1 + \frac{\ourK}{2(\ourK-1)}\right)\Big/\left(\frac{1}{2} - \frac{1}{\ourK}\right)).
\end{align}
One can check that $\betaK < 0.2$ 
for all $\ourK\geq 3$.
The proof is easy but we included it, see
\Cref{sec:proof-betaK-upper-bound}.
Note $\betaK$ is analogous to $\alpha_k$ from Pittel-Sorkin, but $\betaK$ is (slightly) smaller,
which causes us the difficulty that 
we must handle the interval 
 $[\betaK, \alpha_\ourK)$ separately.

\Cref{prop:Jk-upper}, which we are working toward, provides bounds on $\JK(\alpha,\zzz(\alpha),c)$ holding for $\alpha\in(0,1]$ when $\zzz$ is defined as a particular piecewise function. Most branches of the piecewise function are related to $\zzzlin$ or $\zzzsqrt$, which we define as
\begin{align}
    \mathdf{\zzzlin(\alpha)} &= (\zeta_1(\alpha), \zeta_2(\alpha)) \quad \text{with} \quad \zeta_1(\alpha) = \alpha, &&\quad \zeta_2(\alpha) = 1-\alpha \label{def:eq-zzzlin}
    \\ \mathdf{\zzzsqrt(\alpha)} &= (\zeta_1(\alpha), \zeta_2(\alpha)) \quad \text{with} \quad \zeta_1(\alpha) = \sqrt{\alpha/(\ourK-1)}, &&\quad \zeta_2(\alpha) = 1-\alpha. \label{def:eq-zzzsqrt}
\end{align}

The proof involves theory, results invoked from \cite{pittel2016satisfiability},
and interval arithmetic (numerical calculations) in the Mathematica notebook \cite{nbIntervalCrit}.

\subsubsection{Outline of approach to bounding $\JK$}

This section gives bounds on 
$\JK(\alpha,\zzz(\alpha),c)$ along a particular curve $\zzz$. This is concluded 
in \Cref{prop:Jk-upper}, 
proved
by implementing the following outline:

\ben 
\item For all $K\geq 3$ and $c \in (2,K)$,
\Cref{lem:Jk-upper-near-0} uses $\zzz = \zzzsqrt$ to give a good bound on $\JK$ for 
$\alpha \in (0,\betaK)$. 

\item For $\ourK = 3$, the same choice ($\zzz = \zzzsqrt$) 
is enough to show $\JK < -\epsK(c)$ for the remaining $\alpha \in [0.99\betaK, 1/2]$, for all $c\in (2,\ourK)$. Completing this proof in \Cref{lem:nb-K3-Jsqrt-bound} requires rigorous numerical calculations which invoke interval arithmetic.

\item For $\ourK \geq 4$, the choice $\zzz = \zzzsqrt$ does not suffice, so we pick $\zzz = \zzzlin$. We begin by obtaining $\JK \leq 0$ when $c = \ourK$ in \Cref{lem:nb-Kgeq4-J-bound}, with a combination of interval arithmetic and theory from \cite{pittel2016satisfiability}.

On $c \in (2,\ourK)$, the result strengthens to $\JK \leq -\epsK(c)$, as obtained in \Cref{lem:Kgeq4-J-bound-allc}. The proof uses a conditional monotone dependency of $ c \JK(\alpha,\zzzlin(\alpha) ;c)$ on $c$, shown in \Cref{lem:monotonicity-in-c}.

\item 
For all $\ourK\geq 3$, a continuity argument in \Cref{lem:deltaBallK} provides the existence of $\deltaBallK$ such that $\JK \leq -\epsK(c)$ on $\alpha \in [1-\deltaBallK(c),1]$ when we choose $\zzz(\alpha) = (1-\deltaBallK(c),\deltaBallK(c))$.

A reflection argument in \Cref{lem:Jk-upper-near-1} extends $\JK \leq -\epsK(c)$ from $(\deltaBallK(c),1/2]$ to also apply on $\alpha \in [1/2,1 - \deltaBallK(c))$. The choice of $\zzz$ here is a reflection of a piecewise function between $\zzzsqrt$ and $\zzzlin$, as described in \Cref{eq:monster-zzz}.

\een

Combining each item above shows the explicit function $\zzz$ defined in \Cref{eq:monster-zzz} produces $\JK(\alpha,\zzz(\alpha);c)$ which satisfies a good bound for $\alpha \in (0,1)$.

\bs

\subsection{Bounds along $\zzzlin$.}
\label{sec:nb-bounds-along-zzzlin}

In this section, we will evaluate $\JK$ at $\zzzlin$ as defined in \Cref{def:eq-zzzlin}, so define $\LK$ by
\begin{align}
    \label{def:LK}
    \mathdf{\LK(\alpha,c)} \coloneqq c \JK(\alpha,\zzzlin(\alpha);c)
\end{align}
We will want certain estimates on $\LK$. 

To do this, we start with some results of 
Pittel-Sorkin on the function $\Hk(\alpha,\lambda)$, which we now define as 
\begin{align*}
    \mathdf{\Hk(\alpha;\lambda)} \coloneqq \Hk(\alpha,\zzzlin(\alpha);c) \quad \mathrm{with} \ \ 
    c = \frac{1}{\littlek} Q(\lambda),
\end{align*}
in terms of the three-argument function $\Hk$ defined in \Cref{def:Hk}. This matches the definition of the two-argument $\Hk$ in Equation 5.8 of \cite{pittel2016satisfiability}. We now relate this function to $\LK(\alpha,c)$.

\begin{lemma}
    \label{lem:Hk-equals-LK}
    If $\littlek = \ourK$, then
    \begin{align*}
        \Hk(\alpha;\lambda) = \LK(\alpha,Q(\lambda)).
    \end{align*}
\end{lemma}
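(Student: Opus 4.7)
The plan is to verify the identity by direct substitution into the definitions, treating this as a bookkeeping exercise rather than a substantive computation. The content of the lemma is essentially that the two parametrizations introduced (one through $c$ with $\lambda = Q^{-1}(ck)$ in the definition of $\Hk$, the other through $c$ with $\lambda = Q^{-1}(c)$ in the definition of $\JK$) agree up to the scaling factor $c$ and the entropy coefficient, precisely when $\littlek = \ourK$.

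First I would expand $\LK(\alpha, Q(\lambda))$ using $\LK(\alpha, c) = c \JK(\alpha, \zzzlin(\alpha); c)$ at $c = Q(\lambda)$. The inner $Q^{-1}$ appearing in $\JK$ becomes $Q^{-1}(Q(\lambda)) = \lambda$, so multiplying the four terms of $\JK$ by $c = Q(\lambda)$ yields
\begin{align*}
    \LK(\alpha, Q(\lambda)) &= \frac{Q(\lambda)}{\ourK} H(\alpha) + Q(\lambda)\,\alpha \ln(\alpha/\zeta_1) + Q(\lambda)\,\ov\alpha \ln(\ov\alpha/\zeta_2)
    \\ &\quad + \ln\frac{\expMTwo(\lambda(\zeta_2+\zeta_1)) + \expMTwo(\lambda(\zeta_2-\zeta_1))}{2\expMTwo(\lambda)},
\end{align*}
where $\zeta_1, \zeta_2$ denote the components of $\zzzlin(\alpha)$.

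Next I would expand $\Hk(\alpha; \lambda) = \Hk(\alpha, \zzzlin(\alpha); c)$ at $\littlek = \ourK$, noting that the convention $c = \tfrac{1}{\ourK} Q(\lambda)$ forces $c\ourK = Q(\lambda)$ and hence $Q^{-1}(c\ourK) = \lambda$, which matches the $\lambda$ appearing in the outer formula. Substituting $c = Q(\lambda)/\ourK$ into the definition of $\Hk$ with $k=K$, the first term becomes $\tfrac{Q(\lambda)}{\ourK} H(\alpha)$, the two middle terms pick up the prefactor $c\ourK = Q(\lambda)$, and the final log term is unchanged. Comparing this expression with the one above for $\LK(\alpha, Q(\lambda))$ shows the two are identical term by term.

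There is no real obstacle here beyond tracking which $\lambda$ is which; the lemma is a consistency check ensuring that $\LK$, defined via the $\JK$ parametrization, reduces to the Pittel--Sorkin $\Hk(\alpha;\lambda)$ in the case $\littlek = \ourK$. This allows subsequent results about $\Hk$ quoted from \cite{pittel2016satisfiability} to be transported into statements about $\LK$.
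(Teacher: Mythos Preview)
Your proof is correct and follows the same approach as the paper: direct substitution into the definitions of $\Hk$ and $\LK$ followed by term-by-term comparison. The paper's version is marginally terser because it plugs in $\zzzlin(\alpha) = (\alpha,1-\alpha)$ at the outset, which kills the two middle log terms (since $\ln(\alpha/\zeta_1)=\ln(\ov\alpha/\zeta_2)=0$) and leaves only the entropy term and the $\expMTwo$-log term to match; your argument keeps $\zeta_1,\zeta_2$ symbolic throughout, but this is a cosmetic difference, not a substantive one.
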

\begin{proof}
    Expanding the definitions of $\Hk$ and $\LK$ yields
    \begin{align*}
        \mathdf{\Hk(\alpha;\lambda)} &= \frac{Q(\lambda)}{\littlek} H(\alpha) + \ln \frac{\expMTwo(\lambda) + \expMTwo(\lambda \cdot (1 - 2\alpha))}{2 \expMTwo(\lambda)}
        \\ \mathdf{\LK(\alpha,c)} &= \frac{c}{\ourK} H(\alpha) + \ln \frac{\expMTwo(\lambda) + \expMTwo(\lambda \cdot (1-2\alpha))}{2 \expMTwo(\lambda)}.
    \end{align*}
    The result follows from $c=Q(\lambda)$ and $\littlek = \ourK$.
\end{proof}

\begin{lemma}
    \label{lem:monotonicity-in-c}
    For all $\ourK \geq 4$, when $\LK(\alpha,c) \geq 0$, then $\LK(\alpha,c)$ is strictly increasing in $c$.
\end{lemma}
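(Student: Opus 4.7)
The plan is to pass to $\lambda$-coordinates via \Cref{lem:Hk-equals-LK} with $\littlek=\ourK$, derive an algebraic identity that expresses $\partial_\lambda H_\ourK$ as a positive multiple of $H_\ourK$ plus a remainder, and then reduce the question to a $K$-free monotonicity statement about $g(\lambda)/Q(\lambda)$.

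Concretely, \Cref{lem:Hk-equals-LK} gives $\LK(\alpha,Q(\lambda))=H_\ourK(\alpha;\lambda)$, and $Q$ is strictly increasing by Claim~3.1 of \cite{pittel2016satisfiability}. Hence $c\mapsto \LK(\alpha,c)$ is strictly increasing iff $\lambda\mapsto H_\ourK(\alpha;\lambda)$ is. Writing
\[
g(\lambda)\coloneqq \ln\frac{\expMTwo(\lambda)+\expMTwo((1-2\alpha)\lambda)}{2\expMTwo(\lambda)},
\]
we have $H_\ourK(\alpha;\lambda)=\tfrac{Q(\lambda)}{\ourK}H(\alpha)+g(\lambda)$, and a short algebraic rearrangement yields the identity
\[
\partial_\lambda H_\ourK(\alpha;\lambda)=\frac{Q'(\lambda)}{Q(\lambda)}\,H_\ourK(\alpha;\lambda)+\frac{g'(\lambda)Q(\lambda)-g(\lambda)Q'(\lambda)}{Q(\lambda)}.
\]
Under the hypothesis $H_\ourK\ge 0$ and with $Q,Q'>0$, the first summand is $\ge 0$. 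Strict monotonicity of $H_\ourK$ in $\lambda$ (hence of $\LK$ in $c$) therefore follows once the remainder is strictly positive, equivalently once $(g/Q)'(\lambda)>0$.

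The main technical step is verifying $(g/Q)'(\lambda)>0$ for all $\lambda>0$ and $\alpha\in(0,1)\setminus\{\tfrac12\}$. Setting $\phi(\lambda)\coloneqq \expMTwo((1-2\alpha)\lambda)/\expMTwo(\lambda)$, so $g=\ln\tfrac{1+\phi}{2}$, the identity $\expMTwo'(z)/\expMTwo(z)=Q(z)/z$ yields
\[
g'(\lambda)=\frac{\phi(\lambda)\bigl(Q((1-2\alpha)\lambda)-Q(\lambda)\bigr)}{\lambda(1+\phi(\lambda))},
\]
so $g'Q>gQ'$ reduces to an explicit inequality in $Q$, $\expMTwo$, and $\phi$. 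The case $\alpha=\tfrac12$ is immediate (then $g\equiv -\ln 2$ is constant in $\lambda$, so $(g/Q)'=\ln 2\cdot Q'/Q^2>0$), and the boundary limits $g/Q\to\tfrac12\ln\tfrac{1+(1-2\alpha)^2}{2}$ as $\lambda\to 0^+$ and $g/Q\to 0^-$ as $\lambda\to\infty$ are consistent with $g/Q$ being strictly increasing.

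The main obstacle is making $(g/Q)'>0$ rigorous uniformly in $\alpha$. The inequality becomes tight as $\alpha\to 0$ or $\alpha\to 1$: a Taylor expansion in $\delta=1-|1-2\alpha|$ shows the leading-order contribution to $g(\lambda)$ is proportional to $Q(\lambda)$, so its contribution to $(g/Q)'_\lambda$ vanishes, and only the next-order correction drives strict positivity. Handling this uniformly calls for a combination of analytic bounds for $\alpha$ bounded away from $\{0,1\}$ with a careful Taylor analysis (or an interval-arithmetic certificate, as in \cite{nbIntervalCrit}) near the endpoints. Notably, the reduction itself does not actually use $\ourK\ge 4$; that hypothesis enters only through the downstream application of this monotonicity, used to extend $\JK\le 0$ from $c=\ourK$ into $c\in(2,\ourK)$ in the subsequent lemma.
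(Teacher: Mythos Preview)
Your reduction is correct: the identity $\partial_\lambda H_\ourK=(Q'/Q)H_\ourK+Q\cdot(g/Q)'$ holds, and under $H_\ourK\ge 0$ with $Q,Q'>0$ it does reduce the claim to $(g/Q)'(\lambda)>0$. But you have not proved $(g/Q)'>0$; you explicitly flag it as ``the main obstacle'' and offer only boundary limits plus a Taylor heuristic near $\alpha\in\{0,1\}$. Boundary behavior consistent with monotonicity does not establish monotonicity, and your own leading-order expansion shows $(g/Q)'$ vanishes to first order in $\min(\alpha,1-\alpha)$, so the sign genuinely hinges on a second-order computation you never carry out. As written, this is a sketch with an acknowledged gap, not a proof.

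For comparison, the paper's own proof is simply a citation: it invokes Equation~(5.10) in the proof of Claim~5.2 of \cite{pittel2016satisfiability}, which already establishes that $H_\littlek(\alpha;\lambda)$ is strictly increasing in $\lambda$ whenever $H_\littlek\ge 0$, and then transports this via \Cref{lem:Hk-equals-LK} and the strict monotonicity of $Q$. So the paper does not reprove the inequality from scratch either. If you want a self-contained argument, you must either complete the second-order analysis near $\alpha\in\{0,1\}$ and pair it with a compact-region bound, or consult the actual mechanism behind (5.10) in \cite{pittel2016satisfiability}, which may not route through your $(g/Q)'$ reduction at all. Your closing observation that the reduction itself does not use $\ourK\ge 4$ is accurate; that hypothesis is inherited from the downstream application rather than from the monotonicity step.
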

\begin{proof}
    This is Equation 5.10 from the proof of Claim 5.2 of \cite{pittel2016satisfiability}, translated into our notation. They show $\Hk(\alpha;\lambda)$ is strictly increasing in $\lambda$ when $H_k(\alpha;\lambda)\geq 0$. Since $Q$ is strictly increasing, the equality $H_k(\alpha,\lambda) = \LK(\alpha,Q(\lambda))$ from \Cref{lem:Hk-equals-LK} shows $\LK(\alpha,c)$ is strictly increasing in $\lambda$ when $\LK(\alpha,c)\geq 0$. 
\end{proof}

To this point, all our results have come from analytical techniques. Now, we turn to numerical (floating-point) evaluations made rigorous by interval arithmetic.
This is done in the Mathematica notebook \cite{nbIntervalCrit} 
and summarized in appropriate places here. The next lemma uses interval arithmetic on a function of one variable, while later we use it 
on functions of 2 variables.
For a fixed value of $\ourK$ and an interval $[\alpha_0,\alpha_1]$, the technique of interval arithmetic allows us to determine an interval within which the value of $L_K(\alpha,c)$ provably must lie for all $\alpha \in [\alpha_0,\alpha_1]$.

For each $\littlek \geq 3$, define, as in \cite{pittel2016satisfiability},
\begin{align*}
    \mathdf{\alpha_k}
    = ek^{-\littlek/(\littlek-2)},
    \qquad 
     \mathdf{\alpha_k^*} = \frac{1}{2}\left(1 - \frac{1}{\lambda_k} \ln \frac{\expMTwo(\lambda_k)}{\lambda_k^2}\right), \qquad
    \mathdf{\lambda_k} = Q^{-1}(\littlek).
\end{align*}
Note $\alpha_k^* \in (0,1/2)$ is confirmed after Equation 5.23 of \cite{pittel2016satisfiability}. Likewise, $\alpha_k \in (0,0.2)$ is straightforward in a similar way to the approach to $\betaK < 0.2$ from 
after \Cref{def:beta-k}.

\begin{lemma}
    \label{lem:nb-Kgeq4-J-bound}
    For $\ourK \geq 4$,
    \begin{align*}
        \LK(\alpha,\ourK) \leq 0 \foralldisplay{\alpha \in [0.99 \betaK,1/2]}.
    \end{align*}
\end{lemma}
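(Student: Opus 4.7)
The plan is to first apply \Cref{lem:Hk-equals-LK} with $\littlek = \ourK$ to rewrite the target as $\LK(\alpha, \ourK) = \Hk(\alpha; \lambda_\ourK)$, where $\lambda_\ourK = Q^{-1}(\ourK)$. This reduces the lemma to showing $\Hk(\alpha; \lambda_\ourK) \leq 0$ on $[0.99\,\betaK,\,1/2]$ for every $\ourK \geq 4$, which is now a statement living entirely in Pittel--Sorkin's framework.

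Next I would split the interval as $[0.99\,\betaK,\,\alpha_\ourK] \cup [\alpha_\ourK,\,1/2]$. On the upper piece $[\alpha_\ourK,\,1/2]$, the inequality $\Hk(\alpha; \lambda_\ourK) \leq 0$ is exactly what Pittel--Sorkin establish in their analysis of the \kXORSATintro{} threshold (with the specialization $\littlek = \ourK$), so I would invoke that bound directly from \cite{pittel2016satisfiability}. This leaves only the narrow residual interval $[0.99\,\betaK,\,\alpha_\ourK]$, which is the gap created by our choice of $\betaK$ slightly smaller than $\alpha_\ourK$, as flagged in the remark after \Cref{def:beta-k}.

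For the residual gap $[0.99\,\betaK,\,\alpha_\ourK]$ my plan is to use rigorous interval arithmetic in the Mathematica notebook \cite{nbIntervalCrit}: partition the gap into a small number of subintervals and bound $\Hk(\cdot;\lambda_\ourK)$ above on each, using that both $\betaK$ and $\alpha_\ourK$ have closed-form expressions so that the relevant evaluation points are computable to any desired precision. Since the bound is uniform in $K$ only in an asymptotic sense, I would handle $\ourK \in \{4,5,\ldots,N\}$ by direct enumeration with interval arithmetic on each $\ourK$, and treat $\ourK \geq N$ by combining the asymptotic behavior $\alpha_\ourK, \betaK = \Theta(1/\ourK)$ with a Taylor expansion of $\Hk(\alpha;\lambda_\ourK)$ around $\alpha_\ourK$, where $\Hk(\alpha_\ourK;\lambda_\ourK)$ is analytically tractable.

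The main obstacle is this uniform-in-$\ourK$ handling of the gap: interval arithmetic is rigorous but finite, so closing the tail requires either a sharp enough asymptotic expansion of $\Hk$ near $\alpha_\ourK$ or a monotonicity argument in $\ourK$ that propagates a verified bound at $\ourK = N$ to all $\ourK \geq N$. A natural fallback, if direct asymptotics prove delicate, is to strengthen \Cref{lem:Jk-upper-near-0} so that its $\zzzsqrt$-based bound already covers $[0.99\,\betaK,\,\alpha_\ourK]$ for $\ourK$ beyond some threshold $N$, reducing the whole question to a finite case check for $\ourK \leq N$ handled purely by interval arithmetic.
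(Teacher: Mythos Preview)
Your plan is essentially the paper's approach: translate to $\Hk$ via \Cref{lem:Hk-equals-LK}, cite Pittel--Sorkin for the bulk of $[0.99\betaK,1/2]$, and close the residual gap with a mix of interval arithmetic for small $\ourK$ and a uniform-in-$\ourK$ analytic bound for large $\ourK$.

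Two concrete differences worth noting. First, the paper splits at $\alpha_\ourK^*$ rather than $\alpha_\ourK$: what Pittel--Sorkin establish \emph{at the boundary} $\lambda=\lambda_\ourK$ (i.e.\ $c=\ourK$) is $\Hk(\alpha;\lambda_\ourK)\leq 0$ on $[\alpha_\ourK^*,1/2]$, via the argument before their Eq.~5.23; their coverage of $[\alpha_\ourK,\alpha_\ourK^*]$ is entangled with the monotonicity-in-$\lambda$ argument for $c<\ourK$, so citing it cleanly at $c=\ourK$ is not quite direct. The paper therefore takes the larger residual $[0.99\betaK,\alpha_\ourK^*]$. Second, for large $\ourK$ the paper avoids Taylor expansions entirely: it uses Pittel--Sorkin's inequality $\LK(\alpha,\ourK)\leq H(\alpha)+\ln\tfrac{1+e^{-2\ourK\alpha}}{2}$ together with the monotonicity facts that $\alpha_\ourK^*$ is decreasing in $\ourK$ and $\ourK\betaK$ is increasing in $\ourK$, giving a single numerical evaluation that handles all $\ourK\geq 7$ at once (\Cref{lem:nb-Lk-neg-K-geq-7}). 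Only $\ourK\in\{4,5,6\}$ require interval arithmetic, done on a common window $[0.15,0.45]$. This is sharper and more self-contained than the asymptotic/fallback route you sketch, though your route would also close.
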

\begin{proof}
    This is an analog to ``Case $\alpha$ near 1/2'' from the proof of Claim 5.2 of \cite{pittel2016satisfiability}.
    
    A claim before Equation 5.23 of \cite{pittel2016satisfiability} shows $\Hk(\alpha,Q^{-1}(\littlek)) \leq 0$ for all $\alpha \in [\alpha_\littlek^*,1/2]$ for all $\littlek \geq 3$. Hence for all $\ourK\geq 3$,
    \begin{align*}
        \LK(\alpha,\ourK) \leq 0 \foralldisplay{\alpha \in [\alpha_\ourK^*,1/2]}.
    \end{align*}
    Thus it remains for us to show the claim on $\alpha \in [0.99\betaK, \alpha_\ourK^*]$.

    For all $\ourK\geq 7$, \Cref{lem:nb-Lk-neg-K-geq-7} shows
    \begin{align*}
        \LK(\alpha,\ourK) \leq 0 \foralldisplay {\alpha \in [0.99\betaK, \alpha_\ourK^*]}.
    \end{align*}
    For $\ourK=4,5,6$, we use a interval arithmetic on a uniform grid of 25 sub-divisions (see \cite{nbIntervalCrit}) to show
    \begin{align}
        \label{eq:nb-1-LK-neg}
        \LK(\alpha,\ourK) \leq -10^{-4} \leq 0 \foralldisplay{\alpha \in [0.15,0.45]}.
    \end{align}
    For $\ourK=4,5,6$, direct evaluation yields 
    \begin{align}
        \label{eq:nb-1a-beta-alpha-fits}
        [0.99\betaK, \alpha_\ourK^*] \subseteq [0.15,0.45],
    \end{align}
    completing the proof.
\end{proof}

The result is illustrated for $\ourK\in \{4,5,6\}$ in \Cref{fig:nb-Jklin456}.
\begin{figure}[H]
    \centering
    \includegraphics[width=0.7\linewidth]{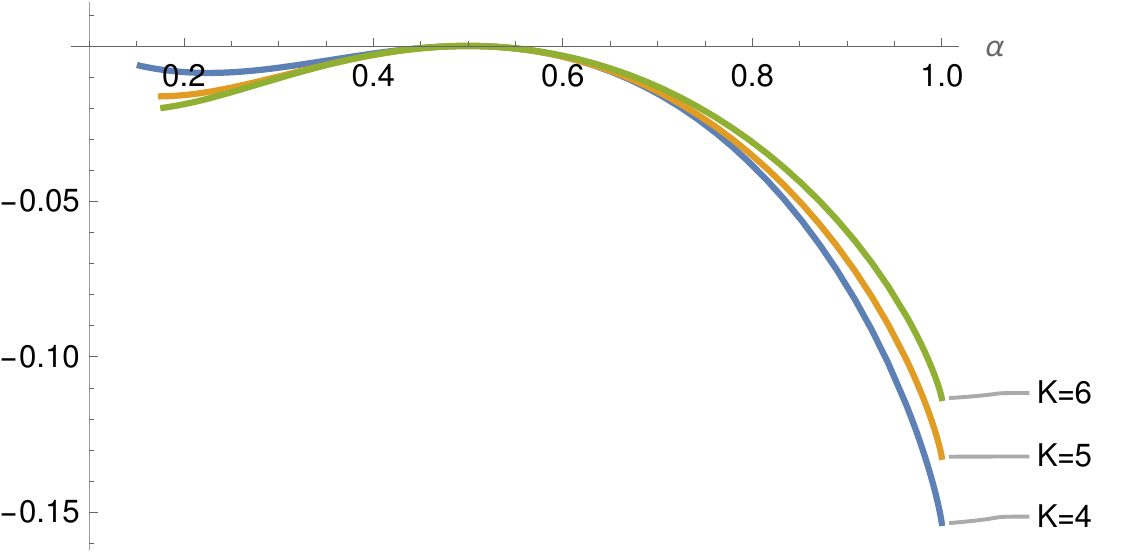}
    \caption{$\JK(\alpha,\zzzlin;c) = \frac{1}{c} \LK(\alpha,c)$ plotted over $\alpha \in [\betaK,1]$ for $c=\ourK \in \{4,5,6\}$.}
    \label{fig:nb-Jklin456}
\end{figure}

\begin{lemma}
    \label{lem:nb-Lk-neg-K-geq-7}
    If $\ourK \geq 7$, then
    \begin{align*}
        \LK(\alpha,\ourK) < -0.016 \foralldisplay{\alpha \in [0.99\betaK, \alpha_\ourK^*]}.
    \end{align*}
\end{lemma}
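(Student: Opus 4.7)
The plan is to exploit the largeness of $\lambda_\ourK = Q^{-1}(\ourK)$ for $\ourK \geq 7$, which drives the ``log term'' of $\LK(\alpha, \ourK)$ close to $-\ln 2$; since $\alpha$ is bounded away from $1/2$ on $[0.99\betaK, \alpha_\ourK^*]$, this forces $\LK$ well below $0$, comfortably past the required $-0.016$.

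As preliminaries, I would record two estimates. From $Q(\lambda_\ourK) = \ourK$ together with $Q(\lambda) - \lambda = O(\lambda e^{-\lambda})$, we have $\lambda_\ourK \geq 6.95$ for $\ourK \geq 7$, with $\lambda_\ourK \to \infty$ as $\ourK \to \infty$. Rewriting
\begin{align*}
    \LK(\alpha, \ourK) = H(\alpha) - \ln 2 + \ln(1 + \epsilon_\ourK(\alpha)), \qquad \epsilon_\ourK(\alpha) := \frac{\expMTwo(\lambda_\ourK(1-2\alpha))}{\expMTwo(\lambda_\ourK)},
\end{align*}
and expanding $\expMTwo$ directly yields $\epsilon_\ourK(\alpha) \leq (1 - (1+\lambda_\ourK)e^{-\lambda_\ourK})^{-1} e^{-2\alpha\lambda_\ourK} \leq 1.01\, e^{-2\alpha\lambda_\ourK}$ for $\ourK \geq 7$.

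The next step reduces the problem to a finite comparison. Letting $\alpha^\dagger = (0.99\betaK + \alpha_\ourK^*)/2$, I would split $[0.99\betaK, \alpha_\ourK^*] = [0.99\betaK, \alpha^\dagger] \cup [\alpha^\dagger, \alpha_\ourK^*]$ and exploit that $H$ is increasing while $\epsilon_\ourK$ is decreasing on each piece to obtain
\begin{align*}
    \sup_{\alpha \in [0.99\betaK,\alpha^\dagger]} \LK(\alpha,\ourK) &\leq H(\alpha^\dagger) - \ln 2 + \ln(1 + \epsilon_\ourK(0.99\betaK)), \\
    \sup_{\alpha \in [\alpha^\dagger,\alpha_\ourK^*]} \LK(\alpha,\ourK) &\leq H(\alpha_\ourK^*) - \ln 2 + \ln(1 + \epsilon_\ourK(\alpha^\dagger)).
\end{align*}

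Finally, I would verify both bounds are $\leq -0.016$ uniformly in $\ourK \geq 7$. The key monotonicity facts are: (i) $\alpha_\ourK^*$ is decreasing in $\ourK$ (from $\alpha_\ourK^* \approx \ln \lambda_\ourK/\lambda_\ourK$ and $\ln\lambda/\lambda$ decreasing for $\lambda \geq e$), giving $H(\alpha_\ourK^*) \leq H(\alpha_7^*) < 0.6$; (ii) $\lambda^2/\expMTwo(\lambda)$ is decreasing in $\lambda$ for $\lambda \geq 2$, giving $\epsilon_\ourK(\alpha_\ourK^*) = \lambda_\ourK^2/\expMTwo(\lambda_\ourK) \leq 0.05$ via the defining identity of $\alpha_\ourK^*$; and (iii) $\betaK\lambda_\ourK$ is increasing in $\ourK$ with limit $e$, so both $H(0.99\betaK)$ and $\epsilon_\ourK(0.99\betaK)$ attain their $\ourK$-maxima at $\ourK = 7$. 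These together yield $\LK(\alpha_\ourK^*, \ourK) < -0.05$ and $\LK(0.99\betaK, \ourK) < -0.14$ for all $\ourK \geq 7$, with a slightly weaker but still comfortable bound at $\alpha^\dagger$. The main obstacle is rigorously establishing the monotonicity of $\betaK\lambda_\ourK$ in $\ourK$ from the implicit definition in \Cref{def:beta-k}---the asymptotics are clean but a uniform argument over all $\ourK \geq 7$ requires careful bookkeeping.
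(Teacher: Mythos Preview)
Your approach is essentially the paper's: decompose $\LK(\alpha,\ourK)$ as $H(\alpha)$ plus a logarithmic term, exploit that the two pieces are monotone in opposite directions in $\alpha$, reduce to $\ourK=7$ via monotonicity of $\alpha_\ourK^*$ and of $\betaK$ times something, and evaluate numerically. The paper's execution is simpler in two places. First, rather than your direct estimate $\epsilon_\ourK(\alpha)\leq 1.01\,e^{-2\alpha\lambda_\ourK}$, the paper invokes Pittel--Sorkin's inequality (their Equation~5.24), which gives the bound $\LK(\alpha,\ourK)\leq H(\alpha)+\ln\frac{1+e^{-2\ourK\alpha}}{2}$ with $\ourK$ in the exponent instead of $\lambda_\ourK$. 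This dissolves exactly the obstacle you flag: since $\betaK$ is an explicit elementary function of $\ourK$, showing $\ourK\betaK$ increasing is a one-line differentiation of $\ln(\ourK\betaK)$, whereas your $\lambda_\ourK\betaK$ drags in the implicit $\lambda_\ourK=Q^{-1}(\ourK)$. Second, the paper does not split the interval: it simply takes $H(\alpha)\leq H(\alpha_7^*)$ and $\ourK\alpha\geq 0.99\cdot 7\beta_7$ simultaneously, then evaluates the single resulting expression numerically. The margin is razor-thin (the sum lands essentially at $-0.016$), which is why your midpoint split buys a much more comfortable constant; the paper trades that comfort for brevity and offloads the final evaluation to the accompanying notebook.

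One small correction: your identity $\epsilon_\ourK(\alpha_\ourK^*)=\lambda_\ourK^2/\expMTwo(\lambda_\ourK)$ is not right. The defining relation gives $\lambda_\ourK(1-2\alpha_\ourK^*)=\ln\bigl(\expMTwo(\lambda_\ourK)/\lambda_\ourK^2\bigr)$, and applying $\expMTwo$ (not $\exp$) to this does not collapse to a closed form. The inequality you need still holds, since the true value is smaller than your claimed expression, so this does not damage the argument.
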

\begin{proof}
    If $0 < \alpha \leq \alpha_\ourK^* \leq \half$, then from Equation 5.24 
    of \cite{pittel2016satisfiability},
    \begin{align}
        \label{eq:ps-24-LK-bound-initial}
        \LK(\alpha,\ourK) \leq H(\alpha) + \ln \frac{1 + e^{-2 \ourK \alpha}}{2}.
    \end{align}
    Let $\alphaloK = 0.99\betaK$ and $\alphahiK = \alpha_\ourK^*$. Note $0 < \betaK < \alpha_\ourK^* < \half$.

    \cite{pittel2016satisfiability} shows $\alphahiK = \alpha_\ourK^*$ is decreasing in $\ourK$, so $\alphahiK \leq \alpha_7^*$.
    Differentiation of $\ln(\ourK \betaK)$ shows $\ourK \betaK$ and hence $\ourK \alphaloK = 0.99\ourK \betaK$ is increasing in $\ourK \geq 7$. Hence $\ourK \alphaloK \geq 0.99\cdot 7 \beta_7$ for $\ourK \geq 7$.
    
    Thus, for $\ourK \geq 7$ and $\alpha \in [\alphaloK, \alphahiK]$, we have $\alpha \leq \alpha_7^* < \half$ and $\ourK \alpha \geq 0.99 \cdot 7\beta_7$. The first term, $H(\alpha)$, in \Cref{eq:ps-24-LK-bound-initial} is decreasing for $\alpha < \half$, and the second term is increasing in $\alpha$, so
    \begin{align}
        \LK(\alpha,\ourK) \leq H(\alpha_7^*) + \ln \frac{1 + e^{-2\cdot 0.99\cdot 7\beta_7}}{2} < -0.016 \foralldisplay{\alpha \in [\alphaloK, \alphahiK]}. \label{eq:nb-upper-bound-LK}
    \end{align}
    where the second inequality is by direct calculation in \cite{nbIntervalCrit}.
\end{proof}

\begin{figure}[H]
    \centering
    \includegraphics[width=0.7\linewidth]{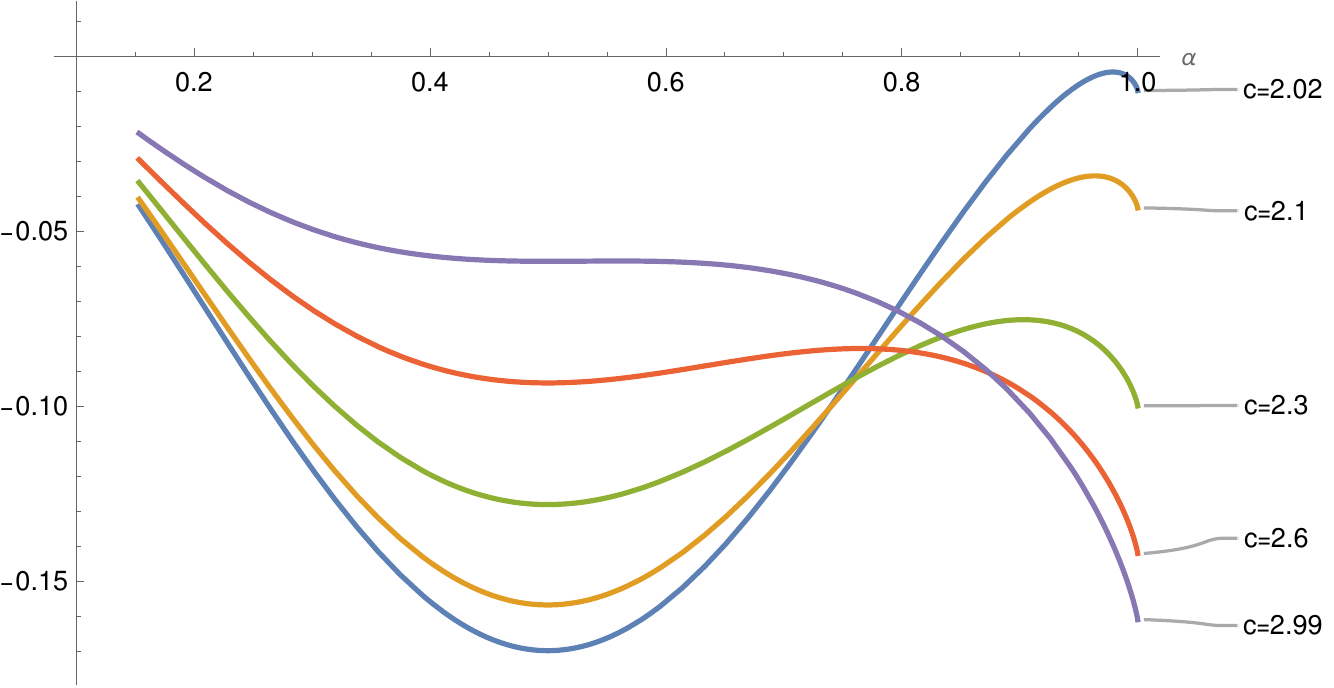}
    \caption{$\JK(\alpha,\zzzlin(\alpha);c)$ plotted for $c=2.02,2.1,2.3,2.6,2.99$ and $\ourK=4$ over $\alpha \in [0.99\betaK,1]$.}
    \label{fig:nb-JKLin_K4_varying_c}
\end{figure}

\begin{lemma}
    \label{lem:Kgeq4-J-bound-allc}
    For $K\geq 4$, there exists a positive function $\epsK$ continuous in $c\in (2,\ourK)$ such that
    \begin{align*}
        \JK(\alpha,\zzzlin(\alpha);c) \leq -\epsK(c) \foralldisplay{\alpha \in [0.99\betaK, 1/2],\, c\in (2,\ourK)}.
    \end{align*}
\end{lemma}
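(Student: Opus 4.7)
The plan is to bootstrap the bound $\LK(\alpha,\ourK)\leq 0$ from \Cref{lem:nb-Kgeq4-J-bound} up to a uniformly strict bound for every $c\in(2,\ourK)$, using the conditional monotonicity of \Cref{lem:monotonicity-in-c}, and then pass to a continuous-in-$c$ bound via compactness in $\alpha$.

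The main step is to show that $\LK(\alpha,c)<0$ strictly for every $\alpha\in[0.99\betaK,1/2]$ and $c\in(2,\ourK)$. Suppose, for contradiction, that $\LK(\alpha_0,c_0)\geq 0$ for some such pair. Consider
\[
S \;:=\; \{c\in[c_0,\ourK]\,:\,\LK(\alpha_0,c)\geq 0\},
\]
which is closed by continuity of $\LK$ and non-empty since $c_0\in S$. At any $c\in S$ with $c<\ourK$, \Cref{lem:monotonicity-in-c} gives $\partial_c\LK(\alpha_0,c)>0$, so $\LK(\alpha_0,\cdot)$ is strictly increasing in a right-neighborhood of $c$, which therefore also lies in $S$. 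A standard $\sup S$ argument then forces $\sup S=\ourK$, and the same strict-monotonicity-on-$S$ shows $\LK(\alpha_0,\ourK)>\LK(\alpha_0,c_0)\geq 0$, contradicting \Cref{lem:nb-Kgeq4-J-bound}.

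Once strict negativity is established, define
\[
M(c)\;:=\;\max_{\alpha\in[0.99\betaK,\,1/2]}\LK(\alpha,c).
\]
The maximum exists by compactness of $[0.99\betaK,1/2]$ and joint continuity of $\LK$, and by the previous step satisfies $M(c)<0$ for all $c\in(2,\ourK)$. Continuity of $M$ in $c$ is a routine compactness/subsequence argument: if $c_n\to c$ with maximizers $\alpha_n$, a convergent subsequence $\alpha_n\to\alpha^*$ gives $\limsup M(c_n)\leq M(c)$, and evaluating at a maximizer of $M(c)$ gives the reverse inequality. Setting $\epsK(c):=-M(c)/c$ produces a positive continuous function, and since $\JK(\alpha,\zzzlin(\alpha);c)=\LK(\alpha,c)/c$, we conclude
\[
\JK(\alpha,\zzzlin(\alpha);c)\;\leq\;\frac{M(c)}{c}\;=\;-\epsK(c)
\]
uniformly over $\alpha\in[0.99\betaK,1/2]$, as required.

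The delicate part is the strict-inequality step, since \Cref{lem:monotonicity-in-c} only provides monotonicity inside the non-negative region of $\LK$. The argument has to guarantee that, starting from a hypothetical non-negative value at $c_0$, the trajectory $c\mapsto\LK(\alpha_0,c)$ cannot slip out of the non-negative region before reaching $c=\ourK$. The key observation making the $\sup S$ propagation work is that a strictly positive derivative wherever $\LK\geq 0$ prevents the function from exiting the non-negative region from above—so any entry into $\{\LK\geq 0\}$ below $\ourK$ forces $\LK(\alpha_0,\ourK)$ strictly above zero, contradicting the base case.
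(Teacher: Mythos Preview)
Your proof is correct and follows the paper's strategy: a contradiction via the conditional monotonicity of $\LK$ in $c$ to upgrade $\LK(\alpha,\ourK)\leq 0$ to strict negativity for every $c<\ourK$, followed by compactness in $\alpha$ to extract a uniform, continuous-in-$c$ bound. The only packaging difference is that the paper defines $m_\ourK(c)=\sup\{\LK(\alpha,c_0):\alpha\in[0.99\betaK,1/2],\ c_0\in[2,c]\}$, supping also over $c_0\le c$, which makes continuity in $c$ automatic and avoids your subsequence argument.
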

This result is illustrated in \Cref{fig:nb-JKLin_K4_varying_c}.
\begin{proof}
    This argument is based on the discussion around Equation (5.17) in the proof of Claim 5.2 from \cite{pittel2016satisfiability}.
    
    Recall \Cref{def:LK} defines
    $
        \mathdf{\LK(\alpha,c)} \coloneqq c \JK(\alpha,\zzzlin(\alpha);c).
    $
    For $c \in (2,\ourK)$, let 
    \begin{align*}
        \mathdf{m_\ourK(c)} \coloneqq \sup\{\LK(\alpha,c_0) \mid \ \alpha \in [0.99\betaK, 1/2], 
    \ c_0 \in [2,c]\}
    \end{align*}
    By continuity of $\LK$, the supremum is attained at some $(\hat\alpha,\hat c_0)$, where $0.99\betaK \leq \hat\alpha \leq 1/2$ and $\hat c_0 \in [2,c] \subseteq [2,\ourK)$, so $\hat c_0 < \ourK$.    
    By \Cref{lem:monotonicity-in-c}, if $m_\ourK(c) = \LK(\hat\alpha,\hat c_0) \geq 0$, then $\LK(\hat\alpha,c_0)$ is strictly increasing in $c_0$ for all $c_0 \in [\hat c_0,\ourK]$, so $\LK(\hat\alpha,\ourK) > 0$. However, by \Cref{lem:nb-Kgeq4-J-bound} we have $\LK(\hat\alpha,\ourK) \leq 0$, so by contradiction,
    \begin{align*}
        m_\ourK(c) = \LK(\hat\alpha,\hat c_0) < 0.
    \end{align*}

    Note $m_\ourK(c)$ is continuous in $c$ by continuity of $L_\ourK$, so $\epsK(c) \coloneqq -\frac{1}{\ourK} m_\ourK(c)$ is continuous in $c$. Then by the  definition of $m_\ourK(c)$
    as a supremum,
    \begin{align*}
        \LK(\alpha,c) \leq m_\ourK(c) = -K\epsK(c) \foralldisplay{\alpha \in [0.99\betaK, 1/2],\, c \in (2,\ourK)}.
    \end{align*}
    Then since $-\ourK\leq -c$,
    \begin{align*}
        \ourK \JK(\alpha,\zzzlin(\alpha);c) \leq c \JK(\alpha,\zzzlin(\alpha);c) = \LK(\alpha,c) \leq -\ourK \epsK(c).
    \end{align*}
    Dividing by $\ourK$ finishes the proof.
\end{proof}

\subsection{Bounds along $\zzzsqrt$.}

In this section, we will evaluate $\JK$ at $\zzzsqrt$ as defined in \Cref{def:eq-zzzsqrt} and prove it negative in two different regions. 
\Cref{lem:nb-K3-Jsqrt-bound} applies for $K=3$ and $\alpha$ away from $0$, while \Cref{lem:Jk-upper-near-0} applies for all $\ourK\geq 3$ and $\alpha$ near $0$.

\begin{lemma}
    \label{lem:nb-K3-Jsqrt-bound}
    For $K=3$, there exists a positive function $\epsilon$ continuous in $c\in (2,3)$ such that
    \begin{align*}
        J_3(\alpha,\zzzsqrt(\alpha);c) \leq -\epsilon(c) \foralldisplay{\alpha \in [0.99\beta_3, 1/2],\, c\in (2,3)}.
    \end{align*}
\end{lemma}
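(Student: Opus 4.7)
The plan is to follow the template of Lemma~\ref{lem:Kgeq4-J-bound-allc}, defining a supremum of $J_3(\alpha,\zzzsqrt(\alpha);c_0)$ over a compact region in the $(\alpha,c_0)$-plane and showing it is strictly negative. The structural difference from the $K\geq 4$ case is that no monotonicity analog of Lemma~\ref{lem:monotonicity-in-c} is available for the $\zzzsqrt$ ansatz, so we cannot reduce verification to the single slice $c_0=3$. Instead, the proof relies on two-dimensional rigorous numerical interval arithmetic, carried out in the notebook \cite{nbIntervalCrit}.

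First I would introduce
\[
m_3(c) \coloneqq \sup\{J_3(\alpha,\zzzsqrt(\alpha);c_0) : \alpha \in [0.99\beta_3, 1/2],\ c_0 \in [2,c]\}, \qquad c \in (2,3).
\]
By continuity of $J_3$ in both arguments and compactness of the defining set, $m_3(c)$ is attained, and $c \mapsto m_3(c)$ is continuous in $c$. It therefore suffices to prove the uniform bound $m_3(c) \leq -\delta$ for some explicit $\delta > 0$, since then $\epsilon(c) \coloneqq -m_3(c)$ is the required positive continuous function, with $J_3(\alpha,\zzzsqrt(\alpha);c) \leq m_3(c) = -\epsilon(c)$ by definition.

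For the uniform bound, I partition the rectangle $[0.99\beta_3, 1/2] \times [2,3]$ into a two-dimensional grid and, on each sub-rectangle, apply interval arithmetic to the expression for $J_3$ (built from $H$, $\expMTwo$, $\ln$, and $\lambda = Q^{-1}(c)$) to produce a rigorous enclosure of its range. The step involving $\lambda$ requires a certified interval enclosure of $Q^{-1}(c)$ across each $c$-subinterval, obtained by bracketing the strictly increasing inverse of $Q$. Verifying that the upper endpoint of every enclosure is at most $-\delta$ then yields $J_3(\alpha,\zzzsqrt(\alpha);c_0) \leq -\delta$ on the whole rectangle, and hence $m_3(c) \leq -\delta$ for every $c \in (2,3)$.

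The main obstacle I anticipate is calibrating the grid resolution so that the certified upper bound remains strictly below zero everywhere. In particular, $J_3$ may approach zero near $c=3$ at certain $\alpha$, so that region may require finer subdivision than the rest. This parallels the one-dimensional situation in Lemma~\ref{lem:nb-Kgeq4-J-bound}, where a 25-subdivision grid sufficed along a single variable; here a somewhat finer two-dimensional grid will be needed, but the overall strategy and certification procedure are the same.
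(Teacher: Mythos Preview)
Your overall template is sound, but the claimed uniform bound $m_3(c)\le -\delta$ for a fixed $\delta>0$ is false, and this is the step on which the rest of the argument hangs. A direct substitution (with $\ourK=3$, so $\zzzsqrt(1/2)=(1/2,1/2)$ and hence $\zeta_2-\zeta_1=0$, $\expMTwo(0)=0$) gives the exact value
\[
J_3\bigl(\tfrac12,\zzzsqrt(\tfrac12);c\bigr)=\Bigl(\tfrac13-\tfrac1c\Bigr)\ln 2,
\]
which tends to $0$ as $c\to 3^-$. Consequently $m_3(c)\ge (1/3-1/c)\ln 2\to 0$, so no uniform $\delta>0$ can work, and no refinement of a grid on the closed rectangle $[0.99\beta_3,1/2]\times[2,3]$ will ever certify $J_3\le -\delta$: the corner $(\alpha,c)=(1/2,3)$ lies in the closure and $J_3$ vanishes there. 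Your anticipation that ``finer subdivision'' near $c=3$ suffices is therefore mistaken; the obstruction is structural, not numerical.

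The paper handles exactly this degeneracy by a hybrid argument. It computes the closed form $J_3(1/2,\zzzsqrt(1/2);c)=(1/3-1/c)\ln 2$ and $\partial_\alpha J_3(1/2,\cdot)=0$, then uses interval arithmetic on the \emph{second} $\alpha$-derivative over a small rectangle abutting the corner to certify concavity in $\alpha$ there; this yields the $c$-dependent bound $J_3\le (1/3-1/c)\ln 2$ near $(1/2,3)$. Direct two-variable interval arithmetic (as you propose) is then applied only on sub-rectangles bounded away from that corner, where a uniform $-10^{-4}$ bound does hold. The resulting $\epsilon(c)=\min\{10^{-4},(1/c-1/3)\ln 2\}$ is positive and continuous on $(2,3)$ but tends to $0$ as $c\to 3$, which is unavoidable. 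Your scheme can be salvaged by incorporating this analytic treatment of the corner; without it, the proof does not go through.
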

\begin{proof}
    \def\Jsqrt3{J_{sqrt,3}}
    Letting $\ourK=3$ and $\zzz = \zzzsqrt$, define $\Jsqrt3$ as 
    \begin{align*}
        \mathdf{\Jsqrt3(\alpha,\lambda)} = J_3(\alpha,\zzzsqrt(\alpha);Q(\lambda)),
    \end{align*}
    where $\JK$ was defined in \Cref{def:Jkm}. For all $\lambda$, the formula heavily simplifies at $\alpha = \half$ as verified in \cite{nbIntervalCrit}, yielding
    \begin{align}
        \label{eq:nb-4-Jsqrt3-at-half}
        \Jsqrt3\left(\half,\lambda\right) &= \left(\frac{1}{3}-\frac{1}{Q(\lambda)}\right)\ln(2),
        & \mypdv{\Jsqrt3}{\alpha}\left(\half,\lambda\right) &= 0.
    \end{align}

    We turn again to 
    interval arithmetic, 
    see \cite{nbIntervalCrit}, and evaluate the functions $\Jsqrt3$ and $\mypdv{^2 \Jsqrt3}{\alpha^2}$ on 
    rectangular subsets $(\alpha,\lambda) \subseteq [0,1]\times [0,\infty)$; our Mathematica notebook corresponds to the presentation here closely enough to be readable.
    See \Cref{fig:zzzsqrt-regions} for a diagram of these rectangles and see the verbal discussion of each one.

    \begin{figure}[H]
        \centering
    \begin{tikzpicture}[x=2.4cm,y=2.4cm]
        \def\xa{0}\def\xb{0.75}\def\xc{1.5}\def\xd{3}\def\xe{4}
        \def\ya{0}\def\yb{2}\def\yc{2.75}\def\yd{3}
        \def\crad{0.15}
        \def\fontscale{1.2}
        \def\colA{black}\def\colB{black}\def\colC{black}
        \def\green{green!50!black}
        
        \coordinate (p2b) at ($(\xb,\yb)!0.5!(\xd,\yd)$);
        \coordinate (p3) at ($(\xd,\yb)!0.5!(\xe,\yd)$);
        \coordinate (p2a) at ($(\xb,\ya)!0.5!(\xe,\yb)$);

        \draw[] (\xa,\ya) rectangle (\xe,\yd);
        \draw[draw=none,pattern={Lines[angle=45,distance=4pt]},pattern color=\colB] (\xb,\yb) rectangle (\xd,\yd) (p2b) circle (\crad);
        \draw[draw=none,pattern={Lines[angle=-45,distance=4pt]},pattern color=\colC] (\xd,\yb) rectangle (\xe,\yd) (p3) circle (\crad); 
        \draw[draw=none,pattern={Lines[angle=0,distance=4pt]},pattern color=\colA] (\xb,\ya) rectangle (\xe,\yb) (p2a) circle (\crad); 
        
        \draw[\colA,thick,opacity=0.9] (\xb,\ya)--(\xb,\yd);
        \draw[\colB,thick,opacity=0.9] (\xb,\yb)--(\xe,\yb);
        \draw[\colC,thick,opacity=0.9] (\xd,\yb)--(\xd,\yd);
        
        \draw[\green,line width=4pt] (\xc,\ya) rectangle (\xe,\yc);
        

        \node[scale=\fontscale,\colA] at (p2a) {$2a$};
        \node[scale=\fontscale,\colB] at (p2b) {$2b$};
        \node[scale=\fontscale,\colC] at (p3) {$3$};

        \node[scale=\fontscale,anchor=north] at ($(\xa*0.5+\xe*0.5,\ya-0.3)$) {$\alpha$};
        \node[scale=\fontscale,anchor=north] at (\xa,\ya) {$0$};
        \node[scale=\fontscale,anchor=north] at (\xb,\ya) {$0.07$};
        \node[scale=\fontscale,anchor=north] at (\xc,\ya) {$0.99\betaK$};
        \node[scale=\fontscale,anchor=north,xshift=-3pt] at (\xe,\ya) {$1/2$};

        \node[scale=\fontscale,anchor=south] at (\xd,\yd) {$0.4$};

        \node[scale=\fontscale,anchor=west] at ($(\xe+0.5,\ya*0.5+\yd*0.5)$) {$\lambda$};
        \node[scale=\fontscale,anchor=west,yshift=3pt] at (\xe,\ya) {$0$};
        \node[scale=\fontscale,anchor=west] at (\xe,\yb) {$2.0$};
        \node[scale=\fontscale,anchor=west] at (\xe,\yc) {$2.149...$};
        \node[scale=\fontscale,anchor=west] at (\xe,\yd) {$2.15$};

        \node[scale=\fontscale,anchor=east] at ($(\xa-0.1,\ya*0.5+\yd*0.5)$) {$Q(\lambda)$};
        \node[scale=\fontscale,anchor=east,yshift=3pt] at (\xa,\ya) {$2$};
        \node[scale=\fontscale,anchor=east] at (\xa,\yc) {$3$};
        \draw[dash pattern=on 6pt off 3pt,line width=4pt,\green] (\xa,\yc) -- (\xc,\yc);
    \end{tikzpicture}
        \caption[LoF entry]{Regions used in gridding for $\ourK=3$ and $\zzz=\zzzsqrt$ to obtain $\JK < 0$ on the bold rectangle, which is the region $(\alpha,c) \in [0.99\betaK,1/2] \times (2,3)$. 
        
        
        
        \parindent=1.5em
        Not to scale. The right edge is labeled with $\lambda \in [0,2.15]$ while the left edge is labeled with $Q(\lambda) \in [2,Q(2.15)]$. The top and bottom edges are both labeled with $\alpha \in [0,0.5]$. 
        }
        \label{fig:zzzsqrt-regions}
    \end{figure}

    \noindent 
    (Region 3) \
    A uniform grid of $40\times 40$ interval rectangles produces 
    \begin{align}
        \mypdv{^2 \Jsqrt3}{\alpha^2}(\alpha,\lambda) \leq -0.01 \foralldisplay{(\alpha,\lambda) \in [0.39,0.5]\times [1.9,2.15]}. \label{eq:nb-3-Jsqrt3-second-partial-alpha}
    \end{align}
    Together with the above exact behavior at $\alpha = \half$ in \Cref{eq:nb-4-Jsqrt3-at-half}, this shows
    \begin{align}
        \label{eq:Jsqrt3-regime-3}
        \Jsqrt3(\alpha,\lambda) \leq \left(\frac{1}{3}-\frac{1}{Q(\lambda)}\right)\ln(2) \foralldisplay{(\alpha,\lambda) \in [0.39,0.5]\times [1.9,2.15]}.
    \end{align}

    \noindent 
    (Region 2a)\ 
    A uniform grid of $200\times 200$ interval rectangles produces 
    \begin{align}
        \label{eq:nb-2a-Jsqrt3-regime}
        \Jsqrt3\left(\alpha,\lambda\right) \leq -10^{-4} \foralldisplay{\left(\alpha,\lambda\right) \in [0.07,0.5]\times [0.0,2.0]}.
    \end{align} (Region 2b)\ 
    A uniform grid of $400\times 400$ interval rectangles produces 
    \begin{align}
        \label{eq:nb-2b-Jsqrt3-regime}
        \Jsqrt3\left(\alpha,\lambda\right) \leq -10^{-4} \foralldisplay{\left(\alpha,\lambda\right) \in [0.07,0.4]\times [2.0,2.15]}.
    \end{align}
    Taking the union of these three result regimes (\Cref{eq:Jsqrt3-regime-3}, \Cref{eq:nb-2a-Jsqrt3-regime}, and \Cref{eq:nb-2b-Jsqrt3-regime}) yields 
    \begin{align}
        \label{eq:almost-done-K3-J-bound}
        \Jsqrt3\left(\alpha,\lambda\right) \leq -\epsilon(c) \foralldisplay{(\alpha,\lambda) \in [0.07,1/2]\times [0,2.15]}.
    \end{align}
    with $c = Q(\lambda)$ and $\epsilon(c) > 0$ for $c<3$, where
    \begin{align*}
        \epsilon(c) = \min\left\{10^{-4},\left(\frac{1}{c}-\frac{1}{3}\right)\ln(2)\right\}.
    \end{align*}
    Recall $c = Q(\lambda)$ introduces the relationships
    \begin{align*}
        c = 2 \iff \lambda = 0, \qquad c=3 \iff \lambda = Q^{-1}(3) \approx 2.149,
    \end{align*}
    so the interval $c \in (2,3]$ corresponds to a subset of $\lambda \in (0,2.15]$.

    For $\ourK=3$, we have $[0.99\betaK,1/2] \subseteq [0.07,1/2]$. Hence the rectangle domain in the theorem statement is covered entirely by \Cref{eq:almost-done-K3-J-bound}, completing the proof.
\end{proof}

\bs

\begin{lemma}
    \label{lem:Jk-upper-near-0} 
    For all $\ourK\geq 3$ and all $c \in (2,\ourK)$,
    \begin{align*}
        \JK(\alpha,\zzzsqrt (\alpha) ;c) \leq \left(\frac{1}{2} - \frac{1}{\ourK}\right) \alpha \ln(\alpha / \betaK) \foralldisplay{\alpha \in (0,\betaK)}.
    \end{align*}
    In particular, for any $\deltaK > 0$, there exists $\epsK > 0$ independent of $c$
    such that 
    \begin{align*}
        \JK(\alpha,\zzzsqrt (\alpha) ;c) < -\epsK \foralldisplay{\alpha \in [\deltaK,0.99\betaK], c \in (2,\ourK)}.
    \end{align*}    
\end{lemma}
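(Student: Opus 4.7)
The plan is to substitute $\zzzsqrt(\alpha) = (\sqrt{\alpha/(\ourK-1)},\ov\alpha)$ directly into the definition of $\JK$, reduce the claim to an explicit scalar inequality, and then control the remaining log term via convexity estimates together with a $c$-monotonicity argument. The key simplification is that $\zeta_2 = \ov\alpha$ kills the middle term $\ov\alpha\ln(\ov\alpha/\zeta_2)$, while $\alpha\ln(\alpha/\zeta_1) = \tfrac{\alpha}{2}\ln\alpha + \tfrac{\alpha}{2}\ln(\ourK-1)$, so after collecting
\begin{align*}
    \JK(\alpha,\zzzsqrt(\alpha);c) = \left(\tfrac{1}{2}-\tfrac{1}{\ourK}\right)\alpha\ln\alpha + \tfrac{\alpha}{2}\ln(\ourK-1) - \tfrac{\ov\alpha}{\ourK}\ln\ov\alpha + \tfrac{1}{c}\ln\tfrac{\expMTwo(\lambda(\ov\alpha+\zeta_1))+\expMTwo(\lambda(\ov\alpha-\zeta_1))}{2\expMTwo(\lambda)},
\end{align*}
with $\zeta_1 = \sqrt{\alpha/(\ourK-1)}$ and $\lambda = Q^{-1}(c)$.

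Unwinding \Cref{def:beta-k}, the target bound equals $\left(\tfrac{1}{2}-\tfrac{1}{\ourK}\right)\alpha\ln\alpha + B\alpha$ with $B := \tfrac{1}{\ourK} + \tfrac{1}{2}\ln(\ourK-1) - 1 + \tfrac{\ourK}{2(\ourK-1)}$, so the $\alpha\ln\alpha$ terms cancel and the claim reduces to
\begin{align*}
    -\tfrac{\ov\alpha}{\ourK}\ln\ov\alpha + \tfrac{1}{c}\ln\tfrac{\expMTwo(\lambda(\ov\alpha+\zeta_1))+\expMTwo(\lambda(\ov\alpha-\zeta_1))}{2\expMTwo(\lambda)} \leq \alpha\left(\tfrac{1}{\ourK} - 1 + \tfrac{\ourK}{2(\ourK-1)}\right)
\end{align*}
holding for $\alpha \in (0,\betaK)$ and every $c \in (2,\ourK)$.

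To handle the log term I would apply the identity $\expMTwo(u+v)+\expMTwo(u-v) = 2e^u\cosh(v) - 2(1+u)$ with $u = \lambda\ov\alpha$, $v = \lambda\zeta_1$, combined with $\cosh(v)\leq e^{v^2/2}$ (from $(2n)! \geq 2^n n!$) and the elementary bound $-\ov\alpha\ln\ov\alpha \leq \alpha$. A Taylor expansion at $\alpha=0$ then yields a linear-in-$\alpha$ coefficient $-1 + \tfrac{\lambda e^\lambda}{2(\ourK-1)(e^\lambda-1)}$; because $\tfrac{\lambda e^\lambda}{e^\lambda-1} < Q(\lambda) = c < \ourK$ (a short calculus exercise equivalent to $e^\lambda(1-\lambda)<1$ for $\lambda>0$), this coefficient is strictly less than the right-hand side's coefficient, verifying the reduced inequality infinitesimally near $\alpha=0$, uniformly in $c$. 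The hard part will be extending this neighborhood-of-zero bound to the full interval $(0,\betaK)$ uniformly in $c$; my preferred route is a monotonicity-in-$c$ argument in the spirit of \Cref{lem:monotonicity-in-c} and equation 5.10 of \cite{pittel2016satisfiability}, reducing to the extremal case $c = \ourK$ for which $\betaK$ is calibrated. A backup plan is to show that the residual quotient $G(\alpha,c)/\alpha$ (where $G$ is the difference of the two sides of the reduced inequality) is non-increasing in $\alpha$ on $(0,\betaK)$, using the known monotonicity of $y/(e^y-1)$ to handle the $-\ov\alpha\ln\ov\alpha/\alpha$ contribution and a direct calculus check to dispose of the log piece.

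The ``in particular'' statement is then immediate: the right-hand side $\left(\tfrac{1}{2}-\tfrac{1}{\ourK}\right)\alpha\ln(\alpha/\betaK)$ is $c$-independent, continuous in $\alpha$, and strictly negative throughout the compact set $\alpha\in[\deltaK,0.99\betaK]$, so it attains a negative maximum depending only on $\ourK$ and $\deltaK$; take $\epsK$ to be the negation of this maximum.
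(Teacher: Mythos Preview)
Your initial substitution and algebraic reduction are correct, and the ``in particular'' paragraph is fine. The gap is exactly the part you flag as ``the hard part'': you only control the log term via a Taylor expansion at $\alpha=0$, and neither of your proposed extensions is actually carried out. The monotonicity-in-$c$ route is not well-founded here: \Cref{lem:monotonicity-in-c} concerns $c\JK(\alpha,\zzzlin;c)$, not $\JK(\alpha,\zzzsqrt;c)$, and is only a conditional monotonicity (valid where the function is nonnegative). Even if you could reduce to $c=\ourK$, you would still need the inequality at $c=\ourK$ on the full interval, which you have not shown. The backup quotient-monotonicity plan is plausible but speculative.

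The missing ingredient is a single inequality that removes the need for any interval-splitting or $c$-reduction. Equation~5.3 of \cite{pittel2016satisfiability} (with $k=1$) gives
\[
\frac{\expMTwo(\lambda x)}{\expMTwo(\lambda)} \le \exp\bigl((x-1)c\bigr)\qquad\text{for all }x\ge 0,\ \lambda=Q^{-1}(c).
\]
Applying this to $x=\zeta_2\pm\zeta_1$ \emph{before} summing yields $\expMTwo(\lambda(\zeta_2+\zeta_1))+\expMTwo(\lambda(\zeta_2-\zeta_1))\le 2\expMTwo(\lambda)\,e^{(\zeta_2-1)c}\cosh(c\zeta_1)$, so with $\cosh(y)\le e^{y^2/2}$ one obtains
\[
\frac{1}{c}\ln\frac{\expMTwo(\lambda(\zeta_2+\zeta_1))+\expMTwo(\lambda(\zeta_2-\zeta_1))}{2\expMTwo(\lambda)}\ \le\ (\zeta_2-1)+\frac{c\,\zeta_1^2}{2},
\]
valid for every $\alpha$ in the range (provided $\zeta_2\pm\zeta_1\ge 0$). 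The point is that bounding each $\expMTwo$ term first eliminates the $-2(1+u)$ correction that your $\cosh$-identity route leaves behind, which is precisely what prevented you from taking a clean logarithm. Substituting $\zeta_2=\ov\alpha$, $\zeta_1=\sqrt{\alpha/(\ourK-1)}$ and using $H(\alpha)\le -\alpha\ln\alpha+\alpha$ then gives a bound linear in $c$, and a single application of $c\le\ourK$ matches the definition of $\betaK$ on the nose.
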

\begin{proof}
    
    This is an analog to the first half of \cite{pittel2016satisfiability} Claim 5.1. We handle the details of our proof as follows.
    
    With $\lambda = Q^{-1}(c)$, for $x\geq 0$,  Equation 5.3 of
    \cite{pittel2016satisfiability} 
    with $\littlek=1$ proves
    \begin{align*}
        \frac{\expMTwo(\lambda x)}{\expMTwo(\lambda)} \leq \exp((x-1)c).
    \end{align*}
    Next, they 
    apply this inequality 
    to get 
    \begin{align*}
        &\frac{1}{c} \ln \frac{\expMTwo(\lambda \cdot (\zeta_2 + \zeta_1)) + \expMTwo(\lambda \cdot (\zeta_2 - \zeta_1))}{2 \expMTwo(\lambda)}
        \ \leq\  (\zeta_2-1) + c\,\zeta_1^2/2.
    \end{align*}
    provided
    $\zeta_2 + \zeta_1 \geq 0$ and $\zeta_2 - \zeta_1 \geq 0$.
    Substituting this inequality into \Cref{def:Jkm} yields
    \begin{align*}
        &\JK(\alpha,\zzz (\alpha) ;c) \leq \frac{1}{\ourK} H(\alpha) + \alpha \ln(\alpha/\zeta_1) + \ov\alpha \ln(\ov\alpha / \zeta_2) + (\zeta_2-1) + c\,\zeta_1^2/2.
    \end{align*}
    Now we use $H(\alpha) \leq -\alpha\ln(\alpha) + \alpha$, and substituting the definition of $\zzzsqrt$ from \Cref{def:eq-zzzsqrt} yields
    \begin{align*}
        \JK(\alpha,\zzzsqrt (\alpha);c) &\leq \frac{1}{\ourK} (\alpha - \alpha\ln(\alpha)) + \alpha \ln(\sqrt{(\ourK-1)\alpha}) - \alpha + c\alpha/(2(\ourK-1))
        \\ &= \left(\frac{1}{2} - \frac{1}{\ourK}\right) \alpha \ln(\alpha) + \alpha \left(\frac{1}{\ourK} + \ln(\sqrt{\ourK-1}) - 1 + \frac{c}{2(\ourK-1)}\right)
    \end{align*}
    Using $c\leq \ourK$ and following the definition of $\betaK$ in \Cref{def:beta-k}, we have
    \begin{align}
        \label{eq:Jk-sqrt-upper-bound-near-0}
        \JK(\alpha,\zzzsqrt (\alpha);c) &\leq \left(\frac{1}{2} - \frac{1}{\ourK}\right) \alpha \ln(\alpha / \betaK).
    \end{align}
    The second part of the claim, involving $\epsK$ and $\deltaK$, follows immediately since the upper bound in \Cref{eq:Jk-sqrt-upper-bound-near-0} is continuous and negative for $\alpha \in (0, \betaK)$.
\end{proof}

\subsection{Bounds along a piecewise choice for $\zzz$.}

This section introduces a function $\zzzComp$ and bounds $\JK(\alpha,\zzzComp(\alpha);c)$ for all $\alpha \in (0,1]$. The function $\zzzComp$ is defined piecewise on $\alpha \in [0,1]$ with the pieces being intervals determined by $\deltaBallK(c)$, which we introduce now.
There exists positive functions $\deltaBallK$ and $\epsBallK$ continuous on $c \in (2,\ourK)$ such that for all $c\in (2,\ourK)$, we have $\deltaBallK(c) < \betaK$ and
\begin{align*}
    \JK(\alpha,(1-\deltaBallK(c),\deltaBallK(c));c) \leq -\epsBallK(c) \foralldisplay{\alpha \in [1-\deltaBallK(c), 1]}.
\end{align*}
This is asserted in equation (5.3) of \cite{pittel2016satisfiability}, though we describe a full proof with detailed hypotheses in the appendix, \Cref{lem:deltaBallK}.

For $\ourK \geq 3$, define $\zzzComp$ in terms of $\zzzHelper = (\zetaHelper_1, \zetaHelper_2)$ as follows
\begin{align}
    \label{eq:monster-zzz}
    \mathdf{\zzzComp(\alpha)} &= \begin{cases}
        \zzzHelper(\alpha), & \alpha \in [0,\half],
        \\ (\zetaHelper_2(1-\alpha),\zetaHelper_1(1-\alpha)), &\alpha \in (\half, 1-\deltaBallK(c)],
        \\ (1-\deltaBallK(c), \deltaBallK(c)), &\alpha \in (1-\deltaBallK(c),1],
    \end{cases}
\end{align}
where
\begin{align*}
    \mathdf{\zzzHelper(\alpha)} &= \begin{cases}
        \zzzsqrt(\alpha), & \alpha \in [0, \; 0.99\betaK],
        \\ \zzzsqrt(\alpha), & \alpha \in [0.99\betaK, \half] \text{ if } K=3,
        \\ \zzzlin(\alpha), & \alpha \in [0.99\betaK, \half] \text{ if } K\geq 4.
    \end{cases}
\end{align*}
We emphasize $\zzzsqrt(\alpha)$, defined in \Cref{def:eq-zzzsqrt}, depends on $\ourK$.

\begin{lemma}
    \label{prop:Jk-upper-start}
    For all $\ourK\geq 3$ and $\deltaK \in (0,\half)$, there exist a positive function $\epsK$ continuous in $c \in (2,\ourK)$ such that
    \begin{align}
        \label{eq:JK-epsilon-final}
        \JK(\alpha,\zzzHelper(\alpha);c) \leq -\epsK(c) \foralldisplay{\alpha \in [\deltaK,1/2],\, c\in (2,\ourK)}.
    \end{align}
\end{lemma}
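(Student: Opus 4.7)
The plan is to split the interval $[\deltaK, 1/2]$ at $0.99\betaK$ into (at most) two sub-intervals and apply a different previously-established bound on each piece, matching the branches in the definition of $\zzzHelper$. Since $\betaK < 0.2$, the splitting point $0.99\betaK$ lies strictly below $1/2$, so the interval $[0.99\betaK, 1/2]$ is always non-empty; the interval $[\deltaK, 0.99\betaK]$ is non-empty only when $\deltaK < 0.99\betaK$, and otherwise may be discarded.

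First I would handle the sub-interval $[0.99\betaK, 1/2]$ by case analysis on $\ourK$. If $\ourK = 3$, then by definition $\zzzHelper(\alpha) = \zzzsqrt(\alpha)$ on this interval, and \Cref{lem:nb-K3-Jsqrt-bound} directly supplies a positive function $\epsilon^{(1)}(c)$, continuous on $(2,3)$, for which $\JK(\alpha, \zzzHelper(\alpha); c) \leq -\epsilon^{(1)}(c)$ on this range. If $\ourK \geq 4$, then $\zzzHelper(\alpha) = \zzzlin(\alpha)$, and \Cref{lem:Kgeq4-J-bound-allc} provides the analogous positive continuous function $\epsilon^{(1)}(c)$ on $(2, \ourK)$ giving the same bound. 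In either case the bound of the lemma statement is obtained on $[0.99\betaK, 1/2]$ with the function $\epsilon^{(1)}$.

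Next I would handle the sub-interval $[\deltaK, 0.99\betaK]$, which is only relevant when $\deltaK < 0.99\betaK$. On this interval, for every $\ourK \geq 3$ the definition of $\zzzHelper$ gives $\zzzHelper(\alpha) = \zzzsqrt(\alpha)$, so the second assertion of \Cref{lem:Jk-upper-near-0} applies with the given $\deltaK$ (or with $\deltaK$ replaced by the smaller of $\deltaK$ and some fixed value inside $(0, 0.99\betaK)$) and yields a positive constant $\epsilon^{(2)} > 0$, independent of $c$, such that $\JK(\alpha, \zzzHelper(\alpha); c) < -\epsilon^{(2)}$ throughout.

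Finally I would set
\begin{align*}
    \epsK(c) \;\coloneqq\; \min\{\epsilon^{(1)}(c),\ \epsilon^{(2)}\},
\end{align*}
treating $\epsilon^{(2)}$ as $+\infty$ if the second sub-interval is empty. This is a positive function continuous in $c \in (2, \ourK)$, being the minimum of a positive continuous function and a positive constant, and the two sub-interval bounds combine to give \Cref{eq:JK-epsilon-final} on the full range $[\deltaK, 1/2] \times (2, \ourK)$. There is essentially no obstacle here: all the analytic work has been done in the preceding lemmas, and the present proof is purely a matter of piecing together the bounds that match the piecewise definition of $\zzzHelper$; the only care needed is to match the $\ourK = 3$ versus $\ourK \geq 4$ branches correctly and to handle the degenerate case $\deltaK \geq 0.99\betaK$, in which only the first sub-interval contributes.
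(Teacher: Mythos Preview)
Your proposal is correct and follows essentially the same approach as the paper: split $[\deltaK,1/2]$ at $0.99\betaK$, invoke \Cref{lem:Jk-upper-near-0} on the left piece and \Cref{lem:nb-K3-Jsqrt-bound} or \Cref{lem:Kgeq4-J-bound-allc} on the right piece according to whether $\ourK=3$ or $\ourK\geq 4$, then take the minimum of the resulting $\epsK$'s. Your explicit treatment of the degenerate case $\deltaK \geq 0.99\betaK$ is a small refinement the paper omits.
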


\begin{proof}
    This is an analog of the second half of Claim 5.1 and all of Claim 5.2 of \cite{pittel2016satisfiability}.

    We handle the claimed domain ($K\geq 3$ and $\alpha \in [\deltaK,\half]$) in three cases.
    \begin{enumerate}
        \item Over $\alpha \in [\deltaK,0.99\beta_k]$, for all $K \geq 3$, we have $\zzzHelper = \zzzsqrt$, so \Cref{lem:Jk-upper-near-0} implies \Cref{eq:JK-epsilon-final} for some $\epsK$ which is constant (hence continuous) with respect to $c$.
        
        \item Over $\alpha \in [0.99\betaK,\half]$, for $K = 3$, we have $\zzzHelper = \zzzsqrt$, so \Cref{lem:nb-K3-Jsqrt-bound} implies \Cref{eq:JK-epsilon-final} for the function $\epsK \coloneqq \epsilon$, which is continuous with respect to $c$.
        
        \item Over $\alpha \in [0.99\beta_k,\half]$, for $K \geq 4$, we have $\zzzHelper = \zzzlin$, so \Cref{lem:Kgeq4-J-bound-allc} implies \Cref{eq:JK-epsilon-final} for some function $\epsK$ which is continuous with respect to $c$.
    \end{enumerate}
    Taking the minimum of the $\epsK$ from each case gives the $\epsK$ for the lemma.
\end{proof}

\begin{lemma}
    \label{lem:Jk-upper-near-1}
    
    For all $\ourK\geq 3$, there exists a positive function $\epsK$ continuous in $c \in (2,\ourK)$ such that
    \begin{align*}
        \JK(\alpha,\zzzComp(\alpha);c) < -\epsK(c) \foralldisplay{\alpha \in [1/2,1],\, c\in (2,\ourK)}.
    \end{align*}
\end{lemma}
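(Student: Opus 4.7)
The plan is to split $\alpha \in [1/2, 1]$ at the point $1 - \deltaBallK(c)$ and handle the two sub-intervals separately. On the outer piece $\alpha \in (1-\deltaBallK(c), 1]$, the function $\zzzComp(\alpha)$ equals the constant pair $(1-\deltaBallK(c), \deltaBallK(c))$, so the bound stated just before \Cref{eq:monster-zzz} (proved in the appendix as \Cref{lem:deltaBallK}) gives $\JK(\alpha, \zzzComp(\alpha); c) \leq -\epsBallK(c)$ directly.

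For the inner piece $\alpha \in [1/2, 1 - \deltaBallK(c)]$, I set $\beta := 1 - \alpha \in [\deltaBallK(c), 1/2]$ so that $\zzzComp(\alpha) = (\zetaHelper_2(\beta), \zetaHelper_1(\beta))$, and compare $\JK(\alpha, \zzzComp(\alpha); c)$ to $\JK(\beta, \zzzHelper(\beta); c)$ term-by-term via \Cref{def:Jkm}. The entropy $\tfrac{1}{\ourK}H(\alpha) = \tfrac{1}{\ourK}H(\beta)$ is symmetric, and the pair $\alpha\ln(\alpha/\zeta_1) + \ov\alpha\ln(\ov\alpha/\zeta_2)$, evaluated at $(\alpha,\zeta_1,\zeta_2)=(1-\beta, \zetaHelper_2(\beta), \zetaHelper_1(\beta))$, reorders exactly to $\beta\ln(\beta/\zetaHelper_1(\beta)) + \ov\beta\ln(\ov\beta/\zetaHelper_2(\beta))$, which is the corresponding pair in $\JK(\beta, \zzzHelper(\beta); c)$. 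Only the last log term in $\JK$ is sensitive to the swap: its $\zeta_2+\zeta_1$ argument is symmetric, but $\expMTwo(\lambda(\zeta_2 - \zeta_1))$ becomes $\expMTwo(-\lambda(\zetaHelper_2(\beta) - \zetaHelper_1(\beta)))$ rather than $\expMTwo(+\lambda(\zetaHelper_2(\beta) - \zetaHelper_1(\beta)))$. A direct calculation gives $\expMTwo(z) - \expMTwo(-z) = 2\sinh(z) - 2z \geq 0$ for $z \geq 0$, and a short check shows $\zetaHelper_2(\beta) \geq \zetaHelper_1(\beta)$ on $[0,1/2]$ (immediate for $\zzzlin$; for $\zzzsqrt$ it reduces to $\beta \leq (\ourK-1)(1-\beta)^2$, which holds for $\ourK \geq 3$). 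Hence the $\zzzComp$ numerator is no larger than the $\zzzHelper$ numerator, producing
$$\JK(\alpha, \zzzComp(\alpha); c) \leq \JK(\beta, \zzzHelper(\beta); c).$$

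To close this sub-interval I apply \Cref{prop:Jk-upper-start} with $\deltaK := \deltaBallK(c)$, which yields $\JK(\beta, \zzzHelper(\beta); c) \leq -\epsilon_1(c)$ on $\beta \in [\deltaBallK(c), 1/2]$. Continuity of $\epsilon_1$ in $c$ follows from continuity of $\deltaBallK$: the case-2 and case-3 bounds inside the proof of \Cref{prop:Jk-upper-start} are already continuous in $c$ independently of $\deltaK$, while the case-1 bound coming from \Cref{lem:Jk-upper-near-0} depends continuously on $\deltaK = \deltaBallK(c)$. Setting $\epsK(c) := \min\{\epsilon_1(c),\, \epsBallK(c)\} > 0$ then combines the two sub-intervals and finishes the lemma.

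The main obstacle is the reflection inequality in the second paragraph: $\JK$ is not perfectly symmetric under $\alpha \leftrightarrow 1-\alpha$, $\zeta_1 \leftrightarrow \zeta_2$, because $\expMTwo$ is not even. The saving observation is that the asymmetry points the right way — the flipped choice $\zzzComp$ gives a strictly smaller last log term than the unflipped $\zzzHelper$, so the one-sided inequality above is enough to transport the \Cref{prop:Jk-upper-start} bound past $\alpha = 1/2$. A small secondary issue is verifying continuity of $\epsK$ when the split point $\deltaBallK(c)$ itself moves with $c$, but this follows cleanly by tracking continuity through each sub-bound.
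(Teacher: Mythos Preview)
Your proof is correct and follows essentially the same approach as the paper: split at $1-\deltaBallK(c)$, use \Cref{lem:deltaBallK} on the outer piece, and on the inner piece establish the reflection inequality $\JK(\alpha,\zzzComp(\alpha);c)\le \JK(1-\alpha,\zzzHelper(1-\alpha);c)$ via $\expMTwo(y)\ge\expMTwo(-y)$ for $y\ge 0$, then invoke \Cref{prop:Jk-upper-start}. You are in fact slightly more careful than the paper in two places: you explicitly verify $\zetaHelper_2(\beta)\ge\zetaHelper_1(\beta)$ for both $\zzzlin$ and $\zzzsqrt$, and you address the continuity of the resulting $\epsilon_1(c)$ when $\deltaK=\deltaBallK(c)$ varies with $c$, which the paper glosses over.
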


\begin{proof} 
    This is an analog of Claim 5.3 of \cite{pittel2016satisfiability}.
    We handle the claimed domain, $\alpha \in [1/2, 1]$, in two cases.

  Over $\alpha \in [1-\deltaBallK(c), 1]$,
    the choice of the continuous functions $\deltaBallK$ and $\epsBallK$ in \Cref{lem:deltaBallK} ensures that 
    \begin{align*}
        \JK(\alpha,\zzzComp(\alpha);c) &= \JK(\alpha,(1-\deltaBallK(c),\deltaBallK(c));c) \leq -\epsBallK(c).
    \end{align*}
    
    
    Note
    the definition of $\JK$ in \Cref{def:Jkm} yields
    \begin{align}
        & \JK(1-\alpha,(\zeta_1,\zeta_2);c) - \JK(\alpha,(\zeta_2,\zeta_1),c) \label{eq:JK-difference}
        \\ &= \frac{1}{c}\ln \frac{\expMTwo(x) + \expMTwo(y)}{2 \expMTwo(\lambda)} - \frac{1}{c} \ln \frac{\expMTwo(x) + \expMTwo(-y)}{2 \expMTwo(\lambda)} \nonumber
    \end{align}
    where $x = \lambda \cdot (\zeta_1 + \zeta_2)$ and $y = \lambda \cdot (\zeta_2 - \zeta_1)$. For $\alpha \in [1/2,1-\deltaBallK(c)]$, taking $\zeta_2 = \zetaHelper_2(1-\alpha)$ and $\zeta_1 = \zetaHelper_1(1-\alpha)$ gives $y \geq 0$ since $\lambda > 0$, hence $\expMTwo(y) \geq \expMTwo(-y)$. Thus the difference in \Cref{eq:JK-difference} is non-negative, and we find
    \begin{align}
     \label{eq:ineq-by-JK-limited-asymmetry}
        \JK(\alpha,\zzzComp(\alpha);c) &= \JK(\alpha,(\zetaHelper_2(1-\alpha),\zetaHelper_1(1-\alpha));c)
        \\ &\leq \JK(1-\alpha,(\zetaHelper_1(1-\alpha),\zetaHelper_2(1-\alpha));c)
        = \JK(1-\alpha,\zzzHelper(1-\alpha);c). \nonumber
    \end{align}
    Since $\alpha \in [1/2,1-\deltaBallK(c)]$, we have $1-\alpha \in [\deltaBallK(c),1/2]$, so \Cref{prop:Jk-upper-start} shows there exists $\expandafter\tilde\epsK(c) > 0$ continuous in $c$ such that
    \begin{align*}
        \JK(\alpha,\zzzComp(\alpha);c) \leq \JK(1-\alpha,(\zetaHelper_1(1-\alpha),\zetaHelper_2(1-\alpha));c) \leq -\expandafter\tilde\epsK(c).
    \end{align*}
    
    Taking the minimum of $\expandafter\tilde\epsK(c)$ and $\epsBallK(c)$ gives the desired $\epsK(c)$.
\end{proof}

\bs

\begin{proposition}
    \label{prop:Jk-upper}
    For all $\ourK\geq 3$, there exists positive functions $\epsK$ and $\zetaLowerK$ continuous in $c \in (2,\ourK)$, and a function $\zzzKc(\alpha) = (\zeta_1(\alpha),\zeta_2(\alpha))$ satisfying $\zeta_2(\alpha) \geq \zetaLowerK(c)$ such that for all $c \in (2,\ourK)$,
    \begin{align}
        \JK(\alpha,\zzzKc(\alpha);c) &\leq \left(\frac{1}{2} - \frac{1}{\ourK}\right) \alpha \ln(\alpha / \betaK) &&\textforall{\alpha \in (0, 0.99\betaK)},
        \label{eq:lem-Jk-upper-near-0}
        \\ \JK(\alpha,\zzzKc(\alpha);c) &\leq -\epsK(c) &&\textforall{\alpha \in [\betaK/2,\,1]}.
        \label{eq:lem-Jk-upper-onwards}
    \end{align}
\end{proposition}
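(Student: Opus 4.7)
The plan is to define $\zzzKc$ to be exactly the piecewise function $\zzzComp$ already introduced in \Cref{eq:monster-zzz}, and to assemble the second bound from the four preceding lemmas, since each lemma in this subsection was designed to cover a particular sub-interval of $\alpha \in [0,1]$ for this specific choice.

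For \Cref{eq:lem-Jk-upper-near-0}, note that on $(0, 0.99\betaK) \subseteq (0,\betaK)$ the piecewise function $\zzzComp(\alpha)$ reduces to $\zzzsqrt(\alpha)$, so \Cref{lem:Jk-upper-near-0} gives the bound $(1/2 - 1/\ourK)\alpha\ln(\alpha/\betaK)$ directly. For \Cref{eq:lem-Jk-upper-onwards}, I decompose the interval $[\betaK/2,1]$ into three parts: (i) on $[\betaK/2,\, 0.99\betaK]$, apply the second statement of \Cref{lem:Jk-upper-near-0} with $\deltaK = \betaK/2$, yielding a $c$-independent positive constant $\epsK^{(1)}$; (ii) on $[0.99\betaK,\, 1/2]$, invoke \Cref{prop:Jk-upper-start} to obtain some continuous $\epsK^{(2)}(c)>0$; (iii) on $[1/2,\,1]$, invoke \Cref{lem:Jk-upper-near-1} to obtain some continuous $\epsK^{(3)}(c)>0$. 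Setting $\epsK(c) \coloneqq \min\{\epsK^{(1)},\,\epsK^{(2)}(c),\,\epsK^{(3)}(c)\}$ yields a continuous positive function that gives the required global bound.

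It remains to exhibit the continuous positive function $\zetaLowerK(c)$ bounding $\zeta_2(\alpha)$ below. I will inspect $\zzzComp$ case by case: on $[0,1/2]$ we have $\zeta_2(\alpha) = 1-\alpha \geq 1/2$; on $(1/2,\, 1-\deltaBallK(c)]$ we have $\zeta_2(\alpha) = \zetaHelper_1(1-\alpha)$, which is either $1-\alpha$ or $\sqrt{(1-\alpha)/(\ourK-1)}$, and since $1-\alpha \geq \deltaBallK(c)$, this is bounded below by $\min\{\deltaBallK(c),\,\sqrt{\deltaBallK(c)/(\ourK-1)}\}$; and on $(1-\deltaBallK(c),\,1]$ we have $\zeta_2(\alpha) = \deltaBallK(c)$. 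Taking $\zetaLowerK(c) \coloneqq \min\{\deltaBallK(c),\,\sqrt{\deltaBallK(c)/(\ourK-1)}\}$ works, and since $\deltaBallK$ is continuous and positive on $(2,\ourK)$ by \Cref{lem:deltaBallK}, so is $\zetaLowerK$.

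The genuine mathematical content has all been absorbed into the earlier lemmas — this proposition is essentially a bookkeeping assembly. The one spot that requires the smallest bit of care is making sure the sliver $[\betaK/2,\, 0.99\betaK]$ (which lies outside the $[\deltaK,\, 0.99\betaK]$ regime explicitly invoked in \Cref{prop:Jk-upper-start}) is covered; this is handled directly by the second half of \Cref{lem:Jk-upper-near-0} with the specific choice $\deltaK = \betaK/2$, which produces a uniform-in-$c$ bound since the right-hand side $(1/2 - 1/\ourK)\alpha\ln(\alpha/\betaK)$ does not involve $c$ at all.
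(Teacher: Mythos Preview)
Your proof is correct and follows essentially the same assembly as the paper: set $\zzzKc = \zzzComp$, use \Cref{lem:Jk-upper-near-0} for $\alpha$ near $0$, and combine \Cref{prop:Jk-upper-start} (with $\deltaK = \betaK/2$) and \Cref{lem:Jk-upper-near-1} for $\alpha \in [\betaK/2,1]$. Your three-part split of $[\betaK/2,1]$ is a harmless redundancy, since \Cref{prop:Jk-upper-start} already absorbs the sliver $[\betaK/2,0.99\betaK]$ internally via \Cref{lem:Jk-upper-near-0}; and your choice $\zetaLowerK(c) = \min\{\deltaBallK(c),\sqrt{\deltaBallK(c)/(\ourK-1)}\}$ is in fact the right one (the paper writes $\max$, which appears to be a slip, since on the final piece $\zeta_2 = \deltaBallK(c)$ need not dominate $\sqrt{\deltaBallK(c)/(\ourK-1)}$).
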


\begin{proof}
    For each $\ourK\geq 3$ and $c \in (2,\ourK)$, the function $\zzzKc$ is given by $\zzzKc = \zzzComp$, as defined in \Cref{eq:monster-zzz}. Determine $\zetaLowerK$ by
    \begin{align*}
        \mathdf{\zetaLowerK(c)} = \max\left\{\deltaBallK(c), \sqrt{\deltaBallK(c)/(K-1)}\right\}.
    \end{align*}
    Inspection shows $\zeta_2(\alpha) \geq \zetaLowerK$ in each piece of the definition of $\zzzComp$.
    
    Over $\alpha \in (0,0.99\betaK)$, we have $\zzzComp(\alpha) = \zzzsqrt(\alpha)$, so \Cref{lem:Jk-upper-near-0} proves \Cref{eq:lem-Jk-upper-near-0}.

    To prove \Cref{eq:lem-Jk-upper-onwards},
    we handle its domain in two cases.
    \Cref{prop:Jk-upper-start} with $\deltaK = \betaK/2$ proves \Cref{eq:lem-Jk-upper-onwards} over $\alpha \in [\betaK/2,\,\half]$, while \Cref{lem:Jk-upper-near-1} proves it over $\alpha \in [\half,1]$. Each lemma provides a positive function $\epsK$ continuous in $c$; taking the minimum of these yields the $\epsK$ for the lemma.
\end{proof}

This proposition is the underpinning of the next section.

\section{Upper bound on expected number of critical row sets in terms of $m$}
\label{sec:boundOnExpect}

As we recall from \Cref{sec:random-variables-YZ}, the random variable $\mathdf{\Zmnl} = X^{(\ell)}(\Gamma)$ is the number of non-empty critical row subsets of $\Gamma$ of cardinality $\ell$, when $\Gamma$ is generated uniformly randomly on the space of 2-core \KXORGAME{} equations of the size $(m,\bn)$.
This section proves an upper bound on the expectation of $X(\Gamma) = \sum_{\ell=2}^m \Zmnl$.
The upper bound is tight enough that it fulfills our
main wish: when $\lim m/n_i < \cThreshK$ and $m,n_i \to \infty$,
we have $\Ex[\sum_{\ell=2}^m \Zmnl] \to 0$.
The upper bound 
follows directly from the estimates on $J_K$ given in
\Cref{prop:Jk-upper}, and we now state it.

This is the main component of proving \Cref{thm:main-theorem-core-sat}.

\begin{theorem} 
    \label{thm:sum-upper-bound-O}
    Fix $\ourK\geq 3$. As $m,n_i \to \infty$ with $\lim m/n_i \in (2, \ourK)$ for $i\in\{1,2,\ldots,\ourK\}$, we have 
    \begin{align*}
     \sum_{\ell = 2}^m \Ex[\Zmnl] = O\left(m^{-2\ourK\left(\half - \frac{1}{\ourK}\right)}\right) = O\left(m^{2-\ourK}\right).
    \end{align*}
\end{theorem}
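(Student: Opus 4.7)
The plan is to apply \Cref{prop:ZbdJ} with the function $\zzz = \zzzKc$ provided by \Cref{prop:Jk-upper}, and then split the sum over $\ell \in \{2,\ldots,m\}$ into two ranges according to the two regimes of \Cref{prop:Jk-upper}. A uniformity argument is needed because $\zzzKc$ is defined for each individual $c$, while $\zzz$ in \Cref{prop:ZbdJ} is a single fixed function applied simultaneously to every $c_i$. Since $c_i = m/n_i \to c_i^* \in (2,\ourK)$, eventually all the $c_i$ lie in some compact subset $[c_{\min},c_{\max}] \subset (2,\ourK)$ on which $\epsK$, $\zetaLowerK$, and $\JK(\alpha,\cdot\,;c)$ are continuous. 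Fixing a reference $c^* \in [c_{\min},c_{\max}]$ and taking $\zzz = \zzz_{\ourK,c^*}$, the bounds of \Cref{prop:Jk-upper} persist (with slightly weaker constants) when the evaluation point $c$ is replaced by each $c_i$, by continuity and compactness. The bound $\zeta_2(\alpha) \geq \zetaLowerK(c^*)$ then contributes only a constant factor.

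Substituting into \Cref{prop:ZbdJ} gives, for some constant $C > 0$,
\begin{align*}
  \Ex[\Zmnl] \;\leq\; C\,\ell^{(\ourK-1)/2} \exp\!\left(m \sum_{i=1}^\ourK \JK(\ell/m, \zzz(\ell/m); c_i)\right).
\end{align*}
Split the outer sum at $\ell_0 := \lfloor 0.99\betaK\, m \rfloor$. For $\ell > \ell_0$, we have $\alpha = \ell/m \in [\betaK/2,1]$, so \eqref{eq:lem-Jk-upper-onwards} yields $\sum_i \JK \leq -\ourK\eta$ for some $\eta > 0$ uniform in $i$; hence $\Ex[\Zmnl] \leq C m^{(\ourK-1)/2} e^{-m \ourK\eta}$, and summing over at most $m$ such $\ell$ gives an exponentially small contribution, certainly $o(m^{2-\ourK})$.

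For $2 \leq \ell \leq \ell_0$, we have $\alpha = \ell/m < 0.99\betaK$, so \eqref{eq:lem-Jk-upper-near-0} gives $m \sum_i \JK \leq m \ourK (\tfrac{1}{2}-\tfrac{1}{\ourK})(\ell/m) \ln(\ell/(m\betaK)) = \tfrac{\ourK-2}{2}\,\ell \ln(\ell/(m\betaK))$, hence
\begin{align*}
  \Ex[\Zmnl] \;\leq\; C\,\ell^{(\ourK-1)/2} \bigl(\ell/(m\betaK)\bigr)^{(\ourK-2)\ell/2} \;=:\; C\, t(\ell).
\end{align*}
The leading term $t(2) = 2^{(\ourK-1)/2}(2/(m\betaK))^{\ourK-2} = O(m^{2-\ourK})$ gives the announced order. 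To handle the remaining terms, let $\phi(\ell) := \tfrac{\ourK-2}{2}\,\ell \ln(\ell/(m\betaK))$; this is negative on $(0,m\betaK)$, strictly decreasing on $(0,m\betaK/e)$ and strictly increasing on $(m\betaK/e,m\betaK)$, with minimum $\phi(m\betaK/e) = -(\ourK-2)m\betaK/(2e) = -\Theta(m)$. I split $[3,\ell_0]$ at a fixed constant $L \geq \lceil (2\ourK-1)/(\ourK-2)\rceil$: for $\ell \in [3,L]$, each $t(\ell) = O(m^{-(\ourK-2)\ell/2})$ for fixed $\ell$, so the subsum is $O(m^{-3(\ourK-2)/2}) = o(m^{2-\ourK})$; for $\ell \in (L,\ell_0]$, using that $\phi(\ell) \leq \phi(L+1)$ on $[L+1,m\betaK/e]$ and $\phi(\ell) = -\Theta(m)$ on $[m\betaK/e,\ell_0]$, together with the bound $\ell^{(\ourK-1)/2} \leq m^{(\ourK-1)/2}$ and at most $m$ terms, yields a total $O(m^{(\ourK+1)/2 - (\ourK-2)(L+1)/2 + o(1)}) = o(m^{2-\ourK})$ by the choice of $L$. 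Combining both regimes gives $\sum_{\ell=2}^m \Ex[\Zmnl] = O(m^{2-\ourK}) = O(m^{-2\ourK(1/2 - 1/\ourK)})$.

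\textbf{Main obstacle.} The main technical point is the small-$\alpha$ sum: although only the $\ell = 2$ term contributes at leading order, showing that the $\Theta(m)$ remaining terms aggregate to $o(m^{2-\ourK})$ requires tracking the non-monotonic behavior of $\phi(\ell)$ around $\ell \approx m\betaK/e$ and balancing the polynomial prefactor $\ell^{(\ourK-1)/2}$ against the exponential in $\phi$. A secondary, more routine concern is the uniformity of the $\JK$ bounds in $c$, which follows from continuity and the compactness of $\{c_i\}$ in $(2,\ourK)$ for large $m$.
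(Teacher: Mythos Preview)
Your overall structure matches the paper's: apply \Cref{prop:ZbdJ} with the $\zzz$ supplied by \Cref{prop:Jk-upper}, and split the sum according to the two regimes of \Cref{prop:Jk-upper}. The large-$\alpha$ part is handled identically. Two points deserve comment.

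\emph{Uniformity in $c$.} You correctly observe that \Cref{prop:ZbdJ} is stated with a single $\zzz$ while \Cref{prop:Jk-upper} provides a $c$-dependent $\zzzKc$. The paper sidesteps this by simply using a separate $\zzzKci$ for each block $i$; this is justified because the proof of \Cref{prop:ZbdJ} factors over blocks (each $\Ex[Y_{m,n_i,1}^{(\ell)}]$ is bounded independently via \Cref{lem:initial-upper-bound-exY-in-m} with its own $\zzz$). Your alternative --- fix one $\zzz_{\ourK,c^*}$ and assert that $\JK(\alpha,\zzz_{\ourK,c^*}(\alpha);c_i)\leq -\tilde\epsK$ for every $c_i$ in a compact interval --- is not quite justified by ``continuity and compactness'' alone: you know the bound at $c=c^*$, but nothing a priori at other $c$. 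It can be made to work (since $\zzzKc$ depends on $c$ only through $\deltaBallK(c)$ near $\alpha=1$, and a single small $\delta$ can be chosen uniformly on $[c_{\min},c_{\max}]$), but that requires re-opening the proof of \Cref{prop:Jk-upper}. The paper's per-block choice avoids this entirely.

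\emph{Small-$\alpha$ sum.} Here the paper's device is shorter than your four-piece split. By convexity of $\ell\ln(\ell/\ell_\ourK)$ on $[2,\ell_\ourK/2]$ (with $\ell_\ourK=\betaK m$) one gets the linear interpolation bound $\ell\ln(\ell/\ell_\ourK)\leq 2\ln(2/\ell_\ourK)-0.6(\ell-2)$, which yields $\Ex[\Zmnl]\leq O(m^{2-\ourK})\,\ell^{(\ourK-1)/2}r^{\ell-2}$ for some fixed $r<1$, and the sum over $\ell$ converges in one line. Your split at a fixed $L$ and then at $m\betaK/e$ also works, but note that your threshold $L=\lceil(2\ourK-1)/(\ourK-2)\rceil$ is one too small for $\ourK=3$: you need $L+1>3(\ourK-1)/(\ourK-2)=6$, i.e.\ $L\geq 6$ rather than $5$, to force the $(L,\ell_0]$ subsum to be $o(m^{2-\ourK})$. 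With $L=5$ that subsum is only $O(m^{2-\ourK})$, so the final conclusion survives, but the intermediate claim as written does not.
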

\begin{proof}
    (This is the analog of Corollary 4.3 from \cite{pittel2016satisfiability}.)

    To bound each term in the sum, we will pick the $\zzzKci$ whose existence and properties are stated in \Cref{prop:Jk-upper} with 
    $c_i \coloneqq m/n_i$. \Cref{prop:Jk-upper} involves functions $\zetaLowerK(c)$ and $\epsK(c)$. It will be useful to show these are uniformly bounded away from zero for all but finitely many $(m,\bn)$.
    
    There exists a single compact interval $I \subset (2,\ourK)$ such that $c_i \coloneqq m/n_i \in I$ for each $i$, for sufficiently large $m$. This is because $\lim m/n_i \in (2,\ourK)$ holds for each $i$. Since $I$ is compact, and $\zetaLowerK$ and $\epsK$ are each positive and continuous in $c$, we can define the constants $\zetaLLK$ and $\epsLLK$ by 
    \begin{align*}
        \zetaLLK = \min\{\zetaLowerK(c) \mid c\in I\} > 0, \qquad \epsLLK = \min\{\epsK(c) \mid c\in I\} > 0.
    \end{align*}
    For all but finitely many $(m,\bn)$, we have $c_i \coloneqq m/n_i \in I$ and
    \begin{align}
        \zetaLowerK(m/n_i) \geq \zetaLLK > 0, \qquad \epsK(m/n_i) \geq \epsLLK > 0. \label{eq:zeta-eps-Omega-1}
    \end{align}
    From \Cref{prop:ZbdJ},
    \begin{align*}
        \Ex[\Zmnl] &\leq O(1) \frac{(\ell)^{(\ourK-1)/2}}{\sqrt{(\zeta_2)^\ourK}} \exp(m \sum_{i=1}^\ourK \JK(\alpha,\zzzKci;c_i)).
    \end{align*}
    \Cref{prop:Jk-upper} gives $\zeta_2 \geq \zetaLowerK$, so $\zeta_2 \geq \zetaLLK$, and we collect the denominator into the $O(1)$ to find
    \begin{align*}
        \Ex[\Zmnl] &\leq O(1)(\ell)^{(\ourK-1)/2} \exp(m \sum_{i=1}^\ourK \JK(\alpha,\zzzKci;c_i)).
    \end{align*}
    Let $\ell_\ourK = \betaK m$. 
    We are interested in the sum 
    $\sum_{\ell=2}^m  \Ex[\Zmnl]$, which we split into two parts as
    \begin{align*}
       \ell \in   [2, m] = [2,\ell_\ourK/2] \ \cup \ (\ell_\ourK/2, m]. 
    \end{align*}
    {\bs\noindent\emph{First part of sum}} ($\ell \in [2,\ell_\ourK/2]$):
    Here $\alpha = \ell/m \in (0,\betaK/2] \subseteq (0,0.99\betaK]$. We begin by bounding the summand. \Cref{prop:Jk-upper} gives
    \begin{align*}
        \Ex[\Zmnl] &\leq O(1)(\ell)^{(\ourK-1)/2} \exp(m \sum_{i=1}^\ourK \left(\half-\frac{1}{\ourK}\right) \alpha \ln(\alpha/\betaK)).
        \\ &\leq O(1)(\ell)^{(\ourK-1)/2} \exp(\ourK \left(\half-\frac{1}{\ourK}\right) \ell \ln(\ell/\ell_\ourK)),
    \end{align*}
    By convexity of $\ell \ln(\ell / \ell_\ourK)$ in $\ell$, interpolating on $\ell \in [2,\ell_\ourK/2]$ yields
    \begin{align*}
        \ell \ln (\ell/\ell_\ourK) 
        &\leq 2\ln(2/\ell_\ourK) + \frac{\ell - 2}{\ell_\ourK/2 - 2} \left((\ell_\ourK/2)\ln(1/2) - 2\ln(2/\ell_\ourK)\right)
        \\ &\leq 2\ln(2/\ell_\ourK) + (\ell-2)(-\ln2 + o(1))
        \\ &\leq 2\ln(2/\ell_\ourK) - 0.6(\ell-2)
    \end{align*}
    for sufficiently large $m$, where we used $\ell_\ourK \to \infty$ as $m\to\infty$, and $0.6 < \ln2$. Thus
    \begin{align*}
        \Ex[\Zmnl] &\leq O(1)(\ell)^{(\ourK-1)/2} \exp(\ourK \left(\half-\frac{1}{\ourK}\right) [2\ln(2/\ell_\ourK)-0.6(\ell-2)])
        \\ &\leq O(1)(\ell)^{(\ourK-1)/2} m^{-2\ourK\left(\half - \frac{1}{\ourK}\right)} \exp(-0.6 \ourK \left(\half-\frac{1}{\ourK}\right) (\ell-2)).
    \end{align*}
    where the last line incorporates $ 
     (2m /\ell_\ourK)^{2\ourK(\half-\frac{1}{\ourK})}=
    (2/\betaK)^{2\ourK(\half-\frac{1}{\ourK})} = O(1)$. 
    
    With bounds complete, we return to the first part of the sum. For brevity, let
    $
        r \coloneqq \exp(-0.6 \ourK \left(\half-\frac{1}{\ourK}\right)),
    $
    so
    \begin{align}
        \sum_{\ell=2}^{\floor{\ell_\ourK/2}} 
        \Ex[\Zmnl]
        \leq O(1) m^{-2\ourK\left(\half - \frac{1}{\ourK}\right)} \sum_{\ell=2}^\infty  
        (\ell)^{(\ourK-1)/2} \, r^\ell 
      \label{eq:summation-rhs}
    \end{align}
    The summation in \Cref{eq:summation-rhs} depends only on $\ourK$ and converges since $r < 1$. Hence as $m,n_i\to\infty$,
    \begin{align}
        \label{eq:partial-sum-Zmnl-1-bound}
        \sum_{\ell=2}^{\floor{\ell_\ourK/2}} \Ex[\Zmnl] = O\left(m^{-2\ourK\left(\half - \frac{1}{\ourK}\right)}\right).
    \end{align}
    
    {\bs\noindent\emph{Second part of sum}} ($\ell \in (\ell_\ourK/2,m]$). Here, $\alpha = \ell/m \in (\betaK/2,1]$. By \Cref{prop:Jk-upper}, we have $\Ex[\Zmnl] \leq O(1) \exp(-\epsK m)$, giving
    \begin{align*}
        \sum_{\ell=\floor{\ell_\ourK/2}+1}^m \Ex[\Zmnl] = O(m) \exp(-\epsK m).
    \end{align*}
    From \Cref{eq:zeta-eps-Omega-1}, we have $\epsK \geq \epsLLK$, so
    \begin{align}
        \label{eq:partial-sum-Zmnl-2-bound}
        \sum_{\ell=\floor{\ell_\ourK/2}+1}^m \Ex[\Zmnl] = O(m e^{-\epsLLK m}).
    \end{align}

    {\bs\noindent\emph{Finishing:}} Adding the two partial sums (\Cref{eq:partial-sum-Zmnl-2-bound} and \Cref{eq:partial-sum-Zmnl-1-bound}) finishes this proof of \Cref{thm:sum-upper-bound-O}.
\end{proof}

\section{Translating from 2-cores of a graph  to 
general graphs}
\label{sec:translating-2-cores}

This section provides the bridge which takes us from the (hard-won) theorem that the satisfiability threshold for a 2-core 
\KXORGAME{} is $\ourK$, to the main theorem of this paper,
\Cref{thm:main-thm-satisfiability}, that the satisfiability threshold for unconstrained \KXORGAME{} is $\cThreshK$.
The bridge has two main spans. First, in \Cref{sec:bwz-core-reduction}, we use \cite{botelho2012cores}, which predicts 
the size of the 2-core of a uniformly random \KXORGAME{} matrix. 
Next, in \Cref{sec:maintenance-of-uniformity}, we prove in our situation what is called the
``maintenance of uniformity;" when a randomly-generated
\KXORGAME{} graph $G$ is reduced to its 2-core, the 2-core $\fin G$
is uniformly distributed conditioned on the size of the 2-core.
\Cref{sec:proofFinale} attaches our bridge to the rest of the paper to prove \Cref{thm:main-thm-satisfiability}.

As mentioned in the introduction, there
are two ways to view and state all of the results of this paper: one is with matrices, and another is with hypergraphs.
The predominate perspective we used so far is with matrices.
This section relies heavily on \cite{botelho2012cores},
so we introduce the hypergraph language here in \Cref{sec:matrix-hypergraph-interp},
with reassurance to the reader that if they prefer matrices
the conversion should be easy for them.

\subsection{Hypergraph interpretation}
\label{sec:matrix-hypergraph-interp}

For a boolean matrix $\Gamma \in \z2^{m\times n}$, we construct a hyper-graph as follows. Let the vertices $V = \{1,\ldots,n\}$ index the columns of $\Gamma$, and determine the edges by regarding each row of $\Gamma$ as a hyper-edge containing all the vertices indexed with a 1 in the row. Thus there are $|V|=n$ vertices and $|E|=m$ edges.

With this interpretation, a \kXORSAT{} matrix corresponds to a $\littlek$-uniform hypergraph, meaning each edge contains exactly $\littlek$ vertices of $V$.

With the same interpretation, a \KXORGAME{} matrix corresponds to a $\ourK$-uniform $\ourK$-partite hypergraph, where the vertices $V$ are partitioned into $\ourK$ sets $V_1,\ldots,V_\ourK$, and every edge contains exactly one vertex in each part $V_i$ of the partition. Recall the size of the \KXORGAME{} is divided into blocks, described by the vector $\bn=(n_1,\ldots,n_\ourK)$, where block $j$ has $n_j$ columns. This corresponds to each set $V_j$ containing exactly $n_j$ vertices.

Note the interpretation technically produces multi-hypergraphs, since a duplicated row produces a duplicated edge. The distinction does not matter for the purpose of this paper, since there are asymptotically almost surely no duplicated rows in a uniformly random \kXORSAT{} or \KXORGAME{} matrix, as $m,n\to\infty$ with $m = O(n)$, if $\littlek \geq 3$ (resp., as $m,\bn\to\infty$ with $m = O(n_i)$, if $\ourK \geq 3$).

In \Cref{sec:def-3-game-and-2-cores}, \kXORSAT{} and \KXORGAME{} problems were defined by randomly generating a matrix $\Gamma$ and vector $s$, where the problem is satisfiable if $\Gamma x = s$ has a solution. The analog with hyper-graphs is randomly generating a hyper-graph $(V,E)$ along with a random assignment of 1 of 2 colors, say green or orange, to each hyper-edge.
The problem is satisfiable if the vertices of the hyper-graph can be colored red and blue such that each green hyper-edge contains an odd number of red vertices, and each orange hyper-edge contains an even number of red vertices.

The 2-core notion stated for matrices in \Cref{sec:def-3-game-and-2-cores} corresponds to the standard notion of 2-cores of a hypergraph, since submatrices correspond to subgraphs. The 2-core $\fin G$ of a hypergraph $G$ is the largest induced sub-hypergraph of minimum degree at least 2. Note this notion of sub-hypergraph does not allow for partial edges; an edge is included in $E(\fin G)$ precisely when all vertices of the edge are included in $V(\fin G)$. In this section, we let $\fin m$ be the number of edges in $\fin G$, and $\fin\bn = (\fin n_1, \ldots, n_\ourK)$ describe the number of vertices in each part of the partitioned vertices, so $n_1 = |\fin V_1|, \ldots, n_\ourK = |\fin V_\ourK|$.

\subsection{Satisfiability threshold definitions}

Recall from 
\Cref{sec:mainThms} the definitions 
$ Q(z) = \frac{z(e^z-1)}{e^z-1-z},$ 
and 
\begin{align}
   \mathdf{h_{\ourK}(\mu)} &\coloneqq \frac{\mu}{(e^{-\mu} (e^\mu - 1))^{\ourK - 1}}, & 
    \mathdf{\cThreshK} &\coloneqq h_\ourK(Q^{-1}(\ourK)). \label{eq:def:cThreshK}
\end{align}
Define the continuous increasing function $F$ by $\mathdf{F(z)} = 1 + \frac{e^z - 1}{z}$.
For $\ourK \geq 3$, let $\muTurnK = F^{-1}(\ourK)$. By calculus (cf. Section 1.1 of \cite{botelho2012cores}),  
$h_\ourK$ is decreasing for $0 < \mu < \muTurnK$ and increasing for $\mu > \muTurnK$. For $\ourK \geq 3$, let
\begin{align*}
    \mathdf{\cTurnK} &\coloneqq h_\ourK(\muTurnK) = \inf \{h_{\ourK}(\mu) \mid \mu > 0\}.
\end{align*}
For $\ourK \geq 3$ and $c > \cTurnK$, let $\mathdf{\muK(c)}$ be the larger of the two solutions to $h_\ourK(\mu) = c$, that is,
\begin{align}
    \mathdf{\muK(c)} \coloneqq \max \{\mu \mid h_\ourK(\mu) = c\}. \label{eq:def-mu-k}
\end{align}
Note $\muK\colon (\cTurnK,\infty) \to (\muTurnK,\infty)$ is an increasing function and is a right inverse to $h_\ourK$. In particular, $\mu_\ourK$ is a left inverse to $h_\ourK$ restricted to $(\cTurnK,\infty)$.

\begin{lemma}
    For each $K\geq 3$, we have
    \begin{align*}
        Q^{-1}(\ourK) > \muTurnK, \quad \mathrm{and}\quad \cTurnK < \cThreshK.
    \end{align*}
\end{lemma}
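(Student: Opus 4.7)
The plan is to prove the two claims in order: first $Q^{-1}(K) > \muTurnK$, then deduce $\cTurnK < \cThreshK$ from it using the monotonicity of $h_\ourK$ that the excerpt has already established.

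For the first inequality, I would establish the pointwise comparison $Q(z) < F(z)$ for every $z > 0$. Since both $Q$ and $F$ are strictly increasing and continuous on $(0,\infty)$, this pointwise inequality immediately yields $Q^{-1}(K) > F^{-1}(K) = \muTurnK$ for every $K \geq 3$ (indeed for any value in the common range). To prove $Q(z) < F(z)$, I would write $u = e^z - 1$, so that $u > z > 0$ (since $e^z > 1 + z$ for $z > 0$), and note
\begin{align*}
Q(z) = \frac{zu}{u-z}, \qquad F(z) = \frac{z+u}{z}.
\end{align*}
Cross-multiplying by the two positive quantities $z$ and $u-z$, the desired inequality $Q(z) < F(z)$ is equivalent to $z^2 u < (z+u)(u-z) = u^2 - z^2$, i.e.\ $z^2(u+1) < u^2$. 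Since $u+1 = e^z$, this reads $z^2 e^z < (e^z-1)^2$, or after taking positive square roots, $z e^{z/2} < e^z - 1 = 2 e^{z/2}\sinh(z/2)$, i.e.
\begin{align*}
z < 2\sinh(z/2) \foralldisplay{z > 0},
\end{align*}
which is a standard inequality (immediate from the Taylor series $2\sinh(z/2) = z + z^3/24 + \cdots$).

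For the second inequality, the excerpt already notes (citing \cite{botelho2012cores}) that $h_\ourK$ is strictly decreasing on $(0, \muTurnK)$ and strictly increasing on $(\muTurnK, \infty)$, with infimum $\cTurnK = h_\ourK(\muTurnK)$. Combining this with part (1) gives $Q^{-1}(\ourK) \in (\muTurnK, \infty)$, so by strict monotonicity,
\begin{align*}
\cThreshK = h_\ourK(Q^{-1}(\ourK)) > h_\ourK(\muTurnK) = \cTurnK,
\end{align*}
which is the desired conclusion.

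The only real work is the elementary calculus inequality $z < 2\sinh(z/2)$ and the algebraic reduction that leads to it; once (1) is in hand, (2) is immediate from the monotonicity already recorded in the text. The main (very mild) obstacle is choosing the right algebraic manipulation to turn $Q(z) < F(z)$ into a recognizable convex inequality; the substitution $u = e^z - 1$ makes this transparent.
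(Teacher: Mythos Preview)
Your proof is correct, and the route is genuinely different from the paper's. The paper establishes $F(x) > Q(x)$ only for $x \geq 1$ by comparing derivatives: it shows $F'(x) \geq 1$ for $x \geq 1$ via the factorization $x^2(F'(x)-1) = (x-1)(e^x-1-x)$, invokes the cited bound $Q'(x) < 1$ from \cite{stirlingConnDobro}, and then checks numerically that $F(1) = e > Q(1) \approx 2.39$; this suffices because $K \geq 3 > Q(1)$. Your argument instead proves the pointwise inequality $Q(z) < F(z)$ on all of $(0,\infty)$ by the substitution $u = e^z - 1$ and an algebraic reduction to $z < 2\sinh(z/2)$. The advantages of your approach are that it is self-contained (no external citation for the bound on $Q'$), covers the full positive axis rather than only $z \geq 1$, and avoids any numerical evaluation. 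The paper's derivative comparison is a bit more ad hoc but perfectly adequate for the stated range $K \geq 3$. Both arguments finish the second inequality identically, using the strict increase of $h_\ourK$ on $(\muTurnK,\infty)$.
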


\begin{proof}
    Calculate $F'(x) = \frac{e^x}{x} - \frac{e^x-1}{x^2}$,
    hence
    \begin{align*}
        x^2 (F'(x) - 1) &= x e^x - (e^x - 1) - x^2
        = (x-1)(e^x - 1 - x).
    \end{align*}
    Thus for $x \geq 1$, we have $F'(x) \geq 1$.
    Theorem 3.2 of \cite{stirlingConnDobro} provides $\frac{1}{3} < Q'(x) < 1$ for all $x>0$, hence $F'(x) > Q'(x)$ for $x \geq 1$. Direct evaluation yields 
    \begin{align*}
        e = F(1) > Q(1) \approx 2.39,
    \end{align*}
    so $F(x) > Q(x)$ for all $x \geq 1$, and $F^{-1}(y) < Q^{-1}(y)$ for all $y > Q(1) \approx 2.39$. Note $K \geq 3$, so
    $
        \muTurnK = F^{-1}(\ourK) < Q^{-1}(\ourK).
    $
    Hence $Q^{-1}(\ourK) > \muTurnK$ for each $\ourK \geq 3$, so 
    \begin{equation*}
        \cTurnK = h_\ourK(\muTurnK) < h_\ourK(Q^{-1}(\ourK)) = \cThreshK. \qedhere
    \end{equation*}  
\end{proof}

\begin{lemma}
    \label{lem:Q-beats-K-after-thresh}
    If $c > \cThreshK$, then $Q(\muK(c)) > \ourK$.
\end{lemma}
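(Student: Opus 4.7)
The plan is to reduce the claim to showing $\muK(c) > Q^{-1}(\ourK)$ and then exploit the monotonicity of $\muK$ on $(\cTurnK,\infty)$ inherited from $h_\ourK$ on the increasing branch. Since $Q$ is strictly increasing on the reals (as recalled just before the definition of $\cThreshK$), the conclusion $Q(\muK(c))>\ourK$ is equivalent to $\muK(c)>Q^{-1}(\ourK)$, so I would pivot to proving this latter inequality.

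The key observation is that $Q^{-1}(\ourK)$ is precisely the value of $\muK$ at $c=\cThreshK$. To see this, I would combine two facts already established in the excerpt: the previous lemma gives $Q^{-1}(\ourK) > \muTurnK$, and by definition $\cThreshK = h_\ourK(Q^{-1}(\ourK))$. Because $h_\ourK$ is strictly increasing on $(\muTurnK,\infty)$, the point $Q^{-1}(\ourK)$ lies on the increasing branch of $h_\ourK$, so it is the \emph{larger} of the (at most two) preimages of $\cThreshK$ under $h_\ourK$. By the defining formula \eqref{eq:def-mu-k}, this gives $\muK(\cThreshK) = Q^{-1}(\ourK)$.

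Next I would invoke strict monotonicity of $\muK$ on $(\cTurnK,\infty)$, which is stated right after \eqref{eq:def-mu-k} and follows from the fact that $\muK$ is the right inverse of $h_\ourK$ restricted to its increasing branch. Combined with $\cTurnK < \cThreshK < c$, this yields
\begin{equation*}
\muK(c) \;>\; \muK(\cThreshK) \;=\; Q^{-1}(\ourK).
\end{equation*}
Applying $Q$ to both sides and using its strict monotonicity then gives $Q(\muK(c)) > \ourK$, as required.

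I do not expect any real obstacle here; the work has already been done by the preceding lemma (which placed $Q^{-1}(\ourK)$ on the increasing branch of $h_\ourK$) and by the setup that identifies $\muK$ as the right inverse of $h_\ourK$ on $(\cTurnK,\infty)$. The only point that warrants a sentence of care is the step $\muK(\cThreshK) = Q^{-1}(\ourK)$, where one must verify that the \emph{larger} root is indeed $Q^{-1}(\ourK)$ rather than a smaller companion root on the decreasing branch $(0,\muTurnK)$; the previous lemma's inequality $Q^{-1}(\ourK) > \muTurnK$ is exactly what secures this.
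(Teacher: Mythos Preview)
Your proposal is correct and follows essentially the same route as the paper: both use the previous lemma's inequality $Q^{-1}(\ourK)>\muTurnK$ to identify $\muK(\cThreshK)=Q^{-1}(\ourK)$, and then invoke the (strict) monotonicity of $\muK$ and $Q$ to conclude. The paper phrases the identification step as ``$\muK$ is a left inverse to $h_\ourK$ at $Q^{-1}(\ourK)$,'' which is exactly your observation that $Q^{-1}(\ourK)$ lies on the increasing branch and hence is the larger root selected by \eqref{eq:def-mu-k}.
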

\begin{proof}
    Since $Q^{-1}(\ourK) \geq \muTurnK$, 
    we have $\muK$ is in fact a left inverse to $h_\ourK$ applied at $Q^{-1}(\ourK)$. Applying $\muK$ to both sides of \Cref{eq:def:cThreshK} yields
    \begin{align*}
        \muK(\cThreshK) = Q^{-1}(\ourK)
        \qquad  \mathrm{equivalently} \qquad Q(\muK(\cThreshK)) = \ourK.
    \end{align*}
    The functions $Q$ and $\muK$ are increasing, so $Q(\muK(c))$ is increasing in $c$.
\end{proof}

\subsection{Predicted size of the 2-core}
\label{sec:bwz-core-reduction}

\KXORGAME{} matrices correspond to $\ourK$-uniform $\ourK$-partite hypergraphs $G$ as discussed in \Cref{sec:matrix-hypergraph-interp}. The 2-core of these hypergraphs are relevant to perfect hash functions, cf. \cite{botelhoHashFunctions}, so \cite{botelho2012cores} has analyzed their expected sizes.
Their main theorem produces (a slight modification of) the following lemma, which 
we shall apply to our situation.

\begin{lemma}
    \label{lem:bwz-core-reduction}
    Let $c>0$ and integer $\ourK\geq 3$ be fixed. 
    Suppose $m,n\to\infty$ with $\lim m/n = c$, and random hypergraphs  $G$ are uniformly distributed on the space of $\ourK$-partite $\ourK$-uniform hypergraphs on $\ourK n$ vertices with $m$ edges and each part containing $n$ vertices.
    These correspond to \KXORGAME{} matrices as described in \Cref{sec:matrix-hypergraph-interp}.

    For $c < \cTurnK$, $G$ has an empty 2-core asymptotically almost surely (\aas{}).

    For $c > \cTurnK$, the 2-core of $G$ a.a.s has $\fin n$ vertices and $\fin m$ hyperedges, with
    \begin{align*}
        \fin n &= e^{-\mu} (e^\mu - 1 - \mu) \ourK n (1 + o(1))
    \\  \fin m &= \mu e^{-\mu} (e^\mu - 1) n (1 + o(1)),
    \end{align*}
    where $\mathdf{\mu} \coloneqq \muK(c)$ is defined in \Cref{eq:def-mu-k}.
    Furthermore, the 2-core of $G$ a.a.s has $\fin n_j$ vertices in the $j$th part ($1 \leq j \leq \ourK)$, with
    \begin{align}
    \label{eq:ABCuniformity}
        \fin n_j &= e^{-\mu} (e^\mu - 1 - \mu) n (1 + o(1)).
    \end{align}
    Hence as $m,n \to \infty$, then a.a.s, $\fin m, \fin n_j \to \infty$ and
    \begin{align*}
        \lim \frac{\fin m}{\fin n_j} 
        = \frac{ e^{-\mu} \mu (e^\mu - 1)}{e^{-\mu} (e^\mu - 1 - \mu)} 
        = Q(\mu).
    \end{align*}
    We emphasize that this limit is the same for all $j$.
\end{lemma}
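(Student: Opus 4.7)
The plan is to read the statement off the main theorem of \cite{botelho2012cores}, which already analyzes the 2-core of random $\ourK$-uniform $\ourK$-partite hypergraphs of exactly the model considered here. That paper handles the 2-core via the peeling (whitening) process: iteratively remove degree-one vertices together with their incident hyperedges until none remain. The fluid-limit trajectory of this stochastic process is tracked by Wormald's differential-equations method, which delivers concentration of the surviving vertex and edge counts around a deterministic solution.

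First I would extract the threshold behavior and the aggregate sizes $\fin m, \fin n$ from their theorem. Their fixed-point equation can be rewritten in terms of $h_\ourK(\mu)=c$: below $\cTurnK = \inf_{\mu>0} h_\ourK(\mu)$ the peeling process consumes everything, so the 2-core is empty \aas{}, while above $\cTurnK$ the process terminates at a non-trivial fixed point corresponding to $\mu=\muK(c)$ (the larger root by \Cref{eq:def-mu-k}). Plugging $\mu$ into the closed-form expressions arising from the limit ODE gives
\[
    \fin n = e^{-\mu}(e^\mu-1-\mu)\,\ourK n(1+o(1)), \qquad \fin m = \mu e^{-\mu}(e^\mu-1)\,n(1+o(1)).
\]

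For the per-part refinement \Cref{eq:ABCuniformity}, I would lift their scalar ODE to a vector-valued process tracking $|V_j \cap V(\fin G_t)|$ separately for each block $j\in\{1,\ldots,\ourK\}$ at each step $t$ of the peeling algorithm. Since the starting data are symmetric across blocks ($n_j=n$ for all $j$) and the transition rates of the peeling Markov chain are symmetric in $j$, the associated ODE system admits a solution along the diagonal $n_1(t)=\cdots=n_\ourK(t)$. Uniqueness of the ODE solution pins the trajectory to the diagonal, and the standard concentration of the stochastic process around its fluid limit then yields $\fin n_j = \fin n/\ourK \,(1+o(1))$ \aas. The final conclusion $\fin m / \fin n_j \to Q(\mu)$ is a direct algebraic check using the definition of $Q$ and cancellation of the factor $e^{-\mu}$.

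The main obstacle is making the block-wise lift precise: strictly speaking \cite{botelho2012cores} may only record the aggregate formulas, so the per-part claim either requires rerunning their differential-equations argument with block sizes as separate coordinates, or the lighter alternative of combining an exchangeability identity $\Ex[\fin n_j]=\fin n/\ourK$ (from invariance of the distribution under permuting blocks) with a bounded-differences concentration inequality for $\fin n_j$. The second route avoids redoing the ODE method from scratch and is the route I expect to actually carry out.
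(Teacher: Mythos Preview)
Your outline for the aggregate sizes and the limiting ratio matches the paper: cite \cite{botelho2012cores} for the threshold and for $\fin m,\fin n$, then divide and simplify to $Q(\mu)$. For the per-block claim, the paper takes your route (a) rather than (b), with the key observation that no rerunning is needed: the differential equations already written down in \cite{botelho2012cores} (their Equations (14)--(15)) track each block separately, and by symmetry the equations---hence their solutions---coincide across blocks, so Wormald's concentration gives $\fin n_j = \fin n/\ourK\,(1+o(1))$ directly.

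Your preferred route (b) has a real obstacle. The quantity $\fin n_j$ is not a bounded-differences function of the edge set: altering a single hyperedge can trigger or halt an arbitrarily long peeling cascade, so a raw McDiarmid/Azuma bound is vacuous. Making this route work would require controlling cascade sizes in the random model, which is essentially the content of the differential-equations analysis you hoped to sidestep. Route (a), read off the existing proof in \cite{botelho2012cores}, is both correct and cheaper.
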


\begin{proof}
    The first three equations follow from Theorem 3 of \cite{botelho2012cores}. 
    Our $\ourK$ is their $r$, and we set their $\littlek$ to 2 (to restrict to 2-cores).

    However, the claim about $\fin n_j$ in \Cref{eq:ABCuniformity} is not stated directly in their Theorem 3, but it is apparent from the proof
    as we now discuss. 
    Based on the concentration inequalities in \cite{wormald1995diffeq} (which come from Azuma's inequality), the size of each block \aas{} is closely approximated by
    the solutions of the key differential equations (Equations (14)--(15) of \cite{botelho2012cores}). The differential equation for each block is identical, so the solutions are the same, and $\fin n = \ourK \fin n_j$.

    The limit claim follows from standard algebra.
\end{proof}

For historical perspective, recall 
\kXORSAT{} deals with $\littlek$-uniform hypergraphs which are not necessarily $\littlek$-partite. The analogous theorem to \Cref{lem:bwz-core-reduction} for this simpler case is treated in \cite{molloy2003cores}.

\subsection{Maintenance of uniformity}
\label{sec:maintenance-of-uniformity}

We provide a concise proof for maintenance of uniform distribution from an initial random $\ourK$-partite hypergraph to its 2-core. Note previous work such as \cite{broder1993} Lemma 4.1, \cite{botelho2012cores} Observation 1, and \cite{dubois20023} Lemma 4.1, break up the 2-core reduction into an algorithm that can take many steps, and they show each step preserves a uniform distribution. 
Our proof of the lemma is a bit different from these earlier proofs because we shows the uniform distribution of the 2-core without passing through a sequential algorithm.

Let $\mathdf{\Ximn}$ denote the space of \KXORGAME{} hypergraphs on $\ourK n$ vertices with $m$ edges. Also, let $\Psimnfin$ be the space of 2-core \KXORGAME{} hypergraphs with $\ourK$ parts, $\fin n_j$ vertices in the $j$th part, and $\fin m$ edges. Recall $\Psimnfin$ was defined earlier in \Cref{sec:def-Psimn} as a set of matrices, but this abuse of notation is justified by the discussion in \Cref{sec:matrix-hypergraph-interp}.

\begin{lemma}
    \label{lem:maintain-uniformity-general}
    Let $H$ be some hypergraph, and $\fin G = (\fin V, \fin E)$ a sub-hypergraph of $H$ with $\fin m = |\fin E|$ edges. 
    Let $\fin V = V(\fin G)$, 
    and let $m$ be an integer. 
    
    Then the number of hypergraphs $G \subseteq H$ such that $\fin G$ is the 2-core of $G$
    and $G$ has $m$ edges, depends only on $m$, $\fin m$, $H$, and $\fin V$, and not on the full edge set $\fin E$.
\end{lemma}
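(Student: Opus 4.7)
My approach is to parametrise each valid $G \subseteq H$ by its set of \emph{extra} edges $E_{\mathrm{add}} := E(G) \setminus \fin E$, and to show that the collection of admissible $E_{\mathrm{add}}$ depends only on $H$, $\fin V$, $m$, and $\fin m$.

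First, the decomposition $G = (V(H),\ \fin E \cup E_{\mathrm{add}})$ forces $|E_{\mathrm{add}}| = m - \fin m$. I would argue that no edge of $E_{\mathrm{add}}$ may be entirely contained in $\fin V$: if such an $e \subseteq \fin V$ were added, adjoining it to $\fin G$ would yield a strictly larger induced sub-hypergraph of $G$ still having minimum degree at least $2$, contradicting the maximality of $\fin G$ as the 2-core of $G$. Hence $E_{\mathrm{add}}$ must be drawn from $E^\star := \{ e \in E(H) : e \not\subseteq \fin V \}$, a set depending only on $H$ and $\fin V$.

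The heart of the argument is that the peeling dynamics of $G$ do not see $\fin E$. A straightforward induction on the peeling step establishes that throughout the process (i) every edge of $\fin E$ is still present, and (ii) every vertex of $\fin V$ retains degree at least $2$: at each step, (ii) holds because the degree of $v \in \fin V$ is bounded below by its degree in $\fin G$, which is at least $2$, minus the contribution of any removed $\fin E$-edges, which is zero by (i); and (i) is preserved because every edge of $\fin E$ lies inside $\fin V$, all of whose vertices survive by (ii). Consequently only vertices of $V(H) \setminus \fin V$ can be peeled, and for any such exterior vertex $v$ the current degree equals the number of not-yet-removed edges of $E_{\mathrm{add}}$ containing $v$ (no edge of $\fin E$ reaches $v$). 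The peeling of $G$ therefore agrees on the exterior with the peeling of the auxiliary hypergraph $H_{\mathrm{out}} := \bigl(V(H) \setminus \fin V,\ \{ e \cap (V(H) \setminus \fin V) : e \in E_{\mathrm{add}} \}\bigr)$, and the 2-core of $G$ equals $\fin G$ if and only if the 2-core of $H_{\mathrm{out}}$ is empty.

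Both $H_{\mathrm{out}}$ and the condition that its 2-core is empty depend only on $E_{\mathrm{add}}$ and $\fin V$, not on $\fin E$. Hence the number of admissible $E_{\mathrm{add}} \subseteq E^\star$ of cardinality $m - \fin m$ is a function of $H$, $\fin V$, $m$, and $\fin m$ alone, which proves the lemma. The one delicate step is justifying observations (i) and (ii) above---that the peeling process is inert on $\fin G$---without which the equivalence with $H_{\mathrm{out}}$ would fail; once these are in hand the rest is bookkeeping.
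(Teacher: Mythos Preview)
Your proof is correct and follows the same overall decomposition as the paper: parametrise $G$ by the extra edge set $E_{\mathrm{add}}$ (the paper calls it $F$), show these edges must lie in the set $E^\star$ of edges meeting the exterior (the paper's $W$), and then argue that the condition ``2-core of $G$ equals $\fin G$'' depends only on $E_{\mathrm{add}}$ and $\fin V$. Where you differ is in how you justify this last independence. You run the peeling process and observe that it is inert on $\fin V$ (your claims (i)--(ii)), so that the process on $G$ reduces to peeling the auxiliary hypergraph $H_{\mathrm{out}}$ built from $E_{\mathrm{add}}$ alone. The paper instead gives a static argument: any candidate larger min-degree-$2$ subgraph has edge set $\fin E \cup F'$ for some $F' \subseteq F$, and the degree condition on such a set splits into a trivially-satisfied part on $\fin V$ and a part on exterior vertices that involves only $F'$. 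The paper explicitly advertises that it avoids the sequential peeling viewpoint used in prior work; your route is closer in spirit to that earlier literature, though you package it more cleanly as a single reduction to $H_{\mathrm{out}}$ rather than a step-by-step invariance argument. Both arguments are short and equally effective for this lemma.
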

\begin{proof}
    Let $W \subseteq E(H)$ be the set of all edges which contain at least one vertex not in $\fin V$.
    The set of graphs $G=(V,E)\in \Ximn$ with $\fin G$ as its 2-core is in bijection with the set of $F \subseteq W$ with $\abs{F} = m - \fin m$ such that $(V,\fin E \cup F)$ has $\fin G$ as its 2-core. We prove this bijection next.

    Each graph $G=(V,E)\in \Ximn$ with $E\supseteq \fin E$ can be represented uniquely as the union of the sub-graph $J$ containing only edges in $E(H) \setminus W$, and the sub-graph $F$ containing only edges in $W$. However, if $\fin G = (\fin V, \fin E)$ is the 2-core of $G$, we will always have $J = \fin E$, since any edge $e \in E(H) \setminus W$ consists only of vertices in $\fin V$, so adding $e$ to the edge set would produce a larger 2-core, contrary to a 2-core's maximality property. This establishes the bijection.

    The bijection finishes the proof of the lemma, as we now show that the set of such $F$ depends only $m,\fin m, H,$ and $\fin V$.
    Since $\fin G$ is a sub-graph of $(V,\fin E \cup F)$, automatically the 2-core of $(V, \fin E \cup F)$ is a super-graph of $\fin G$. If the 2-core of $(V,\fin E \cup F)$ were not $\fin G$, then there would be an edge set $E' = \fin E \cup F'$ with $F' \subseteq F$ such that $E'$ contains each vertex in $V$ in either 0 or at least 2 edges. However,
    \begin{itemize}
        \item If $v \in \fin V$, then $v$ is contained in at least 2 edges of $\fin E$ and hence in at least 2 edges of $E'$.
        \item If $v \notin \fin V$, then $v$ is contained in no edges of $\fin E$, so $v$ is contained in the same number of edges in $E'$ as it is in $F' \subseteq F$.
    \end{itemize}
    Thus the existence of such an edge set $E'$ depends only on $F$ and $\fin V$.
\end{proof}

Maintenance of uniformity now follows. 
\begin{proposition}
    \label{lem:maintain-uniformity-1}
    Suppose $G$ is uniformly distributed on the space $\Ximn$ of \KXORGAME{} hypergraphs on $\ourK n$ vertices with $m$ edges, and $\fin G$ is its 2-core. Then conditioned on the size $(\fin m, \fin \bn)$ of $\fin G$, we have $\fin G$ is equally likely to be any 2-core \KXORGAME{} hypergraph of that size, $\Psimnfin$.
    
\end{proposition}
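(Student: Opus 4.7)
The plan is to apply \Cref{lem:maintain-uniformity-general}, instantiated with $H$ taken to be the complete $\ourK$-partite $\ourK$-uniform hypergraph on $\ourK n$ labeled vertices partitioned into $V_1, \ldots, V_\ourK$ with $|V_j| = n$. Under this choice, $\Ximn$ is exactly the collection of sub-hypergraphs of $H$ with $m$ edges, and the uniform law on $\Ximn$ makes each such sub-hypergraph equally likely. For a candidate 2-core $\fin G_0 \in \Psimnfin$, write $N(\fin G_0)$ for the number of $G \in \Ximn$ whose 2-core equals $\fin G_0$; then
\begin{align*}
    \Pr[\fin G = \fin G_0] = \frac{N(\fin G_0)}{|\Ximn|},
\end{align*}
and showing that $\fin G$ conditioned on landing in $\Psimnfin$ is uniform reduces to showing that $N(\fin G_0)$ is constant across $\Psimnfin$.

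The equality of $N(\fin G_0)$ across $\Psimnfin$ will be established in two steps. First, \Cref{lem:maintain-uniformity-general} directly asserts that $N(\fin G_0)$ depends only on $m$, $\fin m$, $H$, and $V(\fin G_0)$, so any two 2-cores in $\Psimnfin$ sharing the same vertex set automatically give equal counts. Second, to compare 2-cores with different vertex sets but matching part-sizes $\fin\bn$, I will exploit the symmetry of $H$. Given $\fin G_0, \fin G_0' \in \Psimnfin$, pick a permutation $\sigma$ of the $\ourK n$ vertices that preserves each part $V_j$ setwise and satisfies $\sigma(V(\fin G_0)) = V(\fin G_0')$; this is possible precisely because $\fin G_0$ and $\fin G_0'$ have matching numbers of vertices in each $V_j$. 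Such a $\sigma$ is an automorphism of $H$, so it acts as a bijection on $\Ximn$ under which the 2-core map is equivariant (maximality of the 2-core and the degree-$\geq 2$ condition are both preserved by any $H$-automorphism). Consequently $N(\fin G_0) = N(\sigma(\fin G_0))$, and since $\sigma(\fin G_0)$ and $\fin G_0'$ share the vertex set $V(\fin G_0')$, the first step then gives $N(\sigma(\fin G_0)) = N(\fin G_0')$.

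Chaining the two steps yields $N(\fin G_0) = N(\fin G_0')$ for all $\fin G_0, \fin G_0' \in \Psimnfin$, which is exactly the conclusion. The substantive combinatorial work is already packaged inside \Cref{lem:maintain-uniformity-general}, so I expect no real obstacle beyond the bookkeeping step of factoring out the relabeling freedom via the part-preserving automorphisms of $H$; the symmetry argument is the only point that requires a little care, since \Cref{lem:maintain-uniformity-general} alone only handles the fixed-vertex-set case.
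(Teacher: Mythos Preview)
Your proposal is correct and follows essentially the same approach as the paper: both apply \Cref{lem:maintain-uniformity-general} with $H$ the complete $\ourK$-partite $\ourK$-uniform hypergraph, and both invoke the part-preserving symmetry of $\Ximn$ to pass from ``same $\fin V$'' to ``same $\fin\bn$.'' The paper compresses your two-step argument into a single ``without loss of generality due to the symmetry of $\Ximn$ and $\Psimn$,'' whereas you spell out the automorphism $\sigma$ explicitly, but the content is identical.
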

\begin{proof}    

    We only need to show that the number of hypergraphs $G \in \Ximn$ which have $\fin G$ as its 2-core depends only on $m,\bn,\fin m, \fin \bn$ and not on the particular $\fin G$. This follows directly from \Cref{lem:maintain-uniformity-general}, by fixing $V$ and $\fin V$ based on $\bn$ and $\fin \bn$ without loss of generality due to the symmetry of $\Ximn$ and $\Psimn$, and letting $H$ be the complete hypergraph containing every possible edge in $\Ximn$.
\end{proof}

\subsection{Proof of \Cref{thm:main-thm-satisfiability}}
\label{sec:proofFinale}

Now we return  to matrix terminology.

\begin{proof}
    Suppose $\ourK \geq 3$ and $c > 2$. Let $m,n \to\infty$ with $\lim m/n = c$. Generate $(\Gamma,s)$ uniformly at random on the space of uniformly-tiled $\ourK$-XORGAME equations of this size ($m$ rows, $\ourK$ blocks of $n$ columns each).

    For each $\Gamma$, we analyze its 2-core $\fin\Gamma$. Let $\fin n_j$ be the number of vertices in block $j$ of $\fin\Gamma$, and $\fin m$ be the number of edges in $\fin\Gamma$. As discussed in \Cref{sec:def-3-game-and-2-cores}, $\Gamma x = s$ with a random $s \in \z2^m$ has the same satisfiability probability as $\fin \Gamma x = \fin s$ with a random $\fin s \in \z2^{\fin m}$.
    By \Cref{lem:maintain-uniformity-1}, $\fin \Gamma$ is uniformly distributed on the space of 2-cores, conditioned on its size $(\fin m, \fin\bn)$.
    This will allow applying \Cref{thm:main-theorem-core-sat} later in the proof. 
    Note 
    \begin{align*}
        \cThreshK > \cTurnK > 2.
    \end{align*}
    If $c > \cTurnK$, then 
    by \Cref{lem:bwz-core-reduction}, as $m,n\to\infty$ with $\lim m/n = c$, \aas{} we have $\fin m, \fin n_j \to \infty$, and
    \begin{align*}
        \lim_{m,n \to \infty} \frac{\fin m}{\fin n_j}
        = Q(\muK(c))\foralldisplay{j\in \{1,\ldots,\ourK\}}.
    \end{align*}
    Suppose $c > \cThreshK$.  
    Then by 
    \Cref{lem:Q-beats-K-after-thresh}, $Q(\muK(c)) > \ourK$, so \aas{} $\fin m, \fin n_j \to \infty$ and $\lim \fin m / \fin n_j > \ourK$ for each $j \in \{1,\ldots\ourK\}$. By \Cref{it:unsat-when-tall} of \Cref{thm:main-theorem-core-sat}, we conclude $\fin \Gamma x = \fin s$ is \aas{} unsatisfiable, with satisfiability probability $O(2^{-(\fin m - \abs{\fin \bn})})$. This proves \Cref{it:main-thm-aas-unsat} of \Cref{thm:main-thm-satisfiability}.

    Suppose $c \in (\cTurnK, \cThreshK)$. 
    Then $Q(\muK(c)) < \ourK$, so \aas{} $\fin m, \fin n_j \to\infty$ and $\lim \fin m / \fin n_j < \ourK$ for each $j \in \{1,\ldots\ourK\}$. By \Cref{it:sat-when-wide} of \Cref{thm:main-theorem-core-sat}, we conclude $\fin\Gamma x = \fin s$ is \aas{} satisfiable, with satisfiability probability $1 - O(\fin m^{2 - \ourK})$. 
    This proves \Cref{it:main-thm-aas-sat} of \Cref{thm:main-thm-satisfiability} for $c > \cTurnK$. 

    Suppose $c \in (2, \cTurnK]$. We use a monotonicity argument. Instead of considering games $(\Gamma,s)$ generated of size $(m,n)$ as $m',n\to\infty$ with $\lim m'/n = c$, consider games $(\Gamma',s')$ generated of size $(m',n)$ as $m,n\to\infty$ with $\lim m/n = c'$, where $c' > c$ and $c' \in (\cTurnK, \cThreshK)$. Note the uniform distribution of $(\Gamma,s)$ equations of size $(m,n)$ can be obtained by taking $(\Gamma',s')$ equations generated of size $(m',n)$ and truncating from $m'$ equations down to $m$ equations. By the previous paragraph, the games of size $(m',n)$ are \aas{} satisfiable as $m',n\to\infty$ with $m'/n \to c' \in (\cTurnK,\cThreshK)$, so the games of size $(m,n)$ are \aas{} satisfiable as well, since truncation cannot take a system from satisfiable to unsatisfiable.
\end{proof}

\clearpage
\printbibliography
\clearpage

\section{Appendix: Minor details--{For Web Only}}

\subsection{Proof of $\betaK$ upper bound}
\label{sec:proof-betaK-upper-bound}

\begin{lemma}
    If $\ourK \geq 3$, then $\betaK < 0.2$.
\end{lemma}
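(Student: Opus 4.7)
The plan is to take logarithms and clear the positive denominator to reduce $\betaK < 0.2$ to a clean analytic inequality in $\ourK$, then dispatch what remains by direct numerical checks for small $\ourK$ and one elementary estimate for large $\ourK$.

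Concretely, since $\tfrac{1}{2}-\tfrac{1}{\ourK}>0$ for $\ourK\geq 3$, applying $-\ln(\cdot)$ to \Cref{def:beta-k} and multiplying through by $2\ourK$, the claim $\betaK<1/5$ is equivalent to
$$h(\ourK)\ \coloneqq\ \ourK\ln(\ourK-1)\ -\ (\ourK-2)\ln 5\ -\ (\ourK-3)\ +\ \frac{1}{\ourK-1}\ >\ 0,$$
where I have used $\tfrac{\ourK^2}{\ourK-1}=\ourK+1+\tfrac{1}{\ourK-1}$ to simplify the contribution of $\tfrac{\ourK}{2(\ourK-1)}$ after multiplication by $2\ourK$.

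For $\ourK\in\{3,4,\ldots,14\}$ I would verify $h(\ourK)>0$ by direct evaluation. For example $h(3)=3\ln 2-\ln 5+\tfrac{1}{2}\approx 0.97$, and the minimum across this finite set is attained near $\ourK=5$ with $h(5)=5\ln 4-3\ln 5-2+\tfrac{1}{4}\approx 0.35$, which is comfortably positive. For $\ourK\geq 15$ I would regroup as
$$h(\ourK)\ =\ \ourK\ln\tfrac{\ourK-1}{5}\ +\ 2\ln 5\ -\ (\ourK-3)\ +\ \tfrac{1}{\ourK-1}.$$
The key observation is that $\tfrac{\ourK-1}{5}\geq\tfrac{14}{5}=2.8>e$, so $\ln\tfrac{\ourK-1}{5}>1$ and hence $\ourK\ln\tfrac{\ourK-1}{5}>\ourK$, giving $h(\ourK)>\ourK+2\ln 5-(\ourK-3)+\tfrac{1}{\ourK-1}=3+2\ln 5+\tfrac{1}{\ourK-1}>0$.

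The main obstacle is largely bookkeeping: the inequality is not extravagantly slack near its minimum (the twelve low-$\ourK$ evaluations drop to roughly $0.35$ before climbing again), so the small-$\ourK$ checks must be done honestly rather than via a crude bound. Once past the threshold $\ourK=15$, which is exactly where $(\ourK-1)/5$ crosses $e$, a single one-line inequality finishes the job.
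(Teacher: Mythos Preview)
Your proposal is correct and takes essentially the same approach as the paper: both arguments reduce to the threshold $(\ourK-1)>5e$ (equivalently $\ourK\geq 15$) for the large-$\ourK$ regime and then verify $\ourK\in\{3,\ldots,14\}$ by direct evaluation. The only cosmetic difference is that the paper first distills the intermediate closed-form bound $\betaK\leq e(\ourK-1)^{-\ourK/(\ourK-2)}\leq e(\ourK-1)^{-1}$ rather than working with the log-inequality $h(\ourK)>0$, but the content is the same.
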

\begin{proof}
    Since $\ourK/(2(\ourK-1)) > 1/2$, we have
    \begin{align*}
        \betaK \leq \exp(1 -\left(\ln(\sqrt{\ourK-1})\right)\Big/\left(\frac{1}{2} - \frac{1}{\ourK}\right) )
         = e(\ourK-1)^{-\ourK/(\ourK-2)} \leq e(\ourK-1)^{-1}.
    \end{align*}
    For $\ourK\geq 15$, we have $\betaK \leq e(\ourK-1)^{-1} \leq 0.2$. The remaining values $\ourK\in \{3,4,\ldots,14\}$ can be checked one-by-one to satisfy $\betaK \leq 0.2$.
\end{proof}

\subsection{Proof of $\deltaBallK$ existence}

\begin{lemma}
    \label{lem:deltaBallK}
    There exists positive functions $\deltaBallK$ and $\epsBallK$ continuous on $c \in (2,\ourK)$ such that for all $c\in (2,\ourK)$, we have $\deltaBallK(c) < \betaK$ and
    \begin{align*}
        \JK(\alpha,(1-\deltaBallK(c),\deltaBallK(c));c) \leq -\epsBallK(c) \foralldisplay{\alpha \in [1-\deltaBallK(c), 1]}.
    \end{align*}
\end{lemma}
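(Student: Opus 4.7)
The plan is to show that the function $\JK(\alpha, (1-\delta, \delta); c)$, viewed with the point $(\alpha, \delta) = (1, 0)$ adjoined by continuous extension, is strictly negative there, and then propagate this fact to a neighborhood of the form $\{(\alpha, \delta) : \alpha \in [1-\delta, 1]\}$ and across $c \in (2, \ourK)$. The key observation is that the $\alpha$-dependence can be separated. Writing $\zeta_1 = 1-\delta$, $\zeta_2 = \delta$, so that $\zeta_1 + \zeta_2 = 1$ and $\zeta_2 - \zeta_1 = 2\delta - 1$, I decompose
\begin{align*}
\JK(\alpha, (1-\delta, \delta); c) = G(\alpha, \delta) + C(\delta, c),
\end{align*}
where $G(\alpha, \delta) = \frac{1}{\ourK} H(\alpha) + \alpha \ln \frac{\alpha}{1-\delta} + \ov\alpha \ln \frac{\ov\alpha}{\delta}$ and $C(\delta, c) = \frac{1}{c} \ln \frac{\expMTwo(\lambda) + \expMTwo(\lambda(2\delta-1))}{2 \expMTwo(\lambda)}$ with $\lambda = Q^{-1}(c)$.

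Next I evaluate the ``boundary.'' At $\delta = 0$, $C(0, c) = \frac{1}{c} \ln \frac{\expMTwo(\lambda) + \expMTwo(-\lambda)}{2 \expMTwo(\lambda)}$. Since $\expMTwo(\lambda) - \expMTwo(-\lambda) = 2\sinh(\lambda) - 2\lambda > 0$ for $\lambda > 0$, the argument of the logarithm is in $(0, 1)$, so I may set $M(c) \coloneqq -C(0, c) > 0$; continuity of $Q^{-1}$ and $\expMTwo$ yields continuity of $M$ on $(2, \ourK)$. For the bound on $G$ over $\alpha \in [1-\delta, 1]$, I substitute $\epsilon = 1 - \alpha \in [0, \delta]$. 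Using that $H$ is symmetric about $1/2$ and increasing on $[0, 1/2]$, $H(\alpha) = H(\epsilon) \leq H(\delta)$. The second term $(1-\epsilon) \ln \frac{1-\epsilon}{1-\delta}$ is non-negative and at most $-\ln(1-\delta)$. The third term $\epsilon \ln(\epsilon/\delta)$ is non-positive. Hence
\begin{align*}
\sup_{\alpha \in [1-\delta, 1]} G(\alpha, \delta) \leq \rho(\delta) \coloneqq \tfrac{1}{\ourK} H(\delta) - \ln(1-\delta),
\end{align*}
where $\rho$ is independent of $c$ and tends to $0$ as $\delta \to 0^+$.

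For the continuous construction, I exhaust $(2, \ourK)$ by nested compact intervals $I_n = [2 + 1/n, \ourK - 1/n]$. On each $I_n$, $M$ attains a positive minimum $M_n > 0$, and the modulus $\sigma_n(\delta) \coloneqq \sup_{c \in I_n} |C(\delta, c) - C(0, c)|$ is continuous in $\delta$ and vanishes at $\delta = 0$. I choose a strictly decreasing sequence $\delta_n \in (0, \betaK)$ with $\rho(\delta_n) + \sigma_n(\delta_n) \leq M_n / 2$. Then for $c \in I_n$,
\begin{align*}
\sup_{\alpha \in [1-\delta_n, 1]} \JK(\alpha, (1-\delta_n, \delta_n); c) \leq \rho(\delta_n) + C(\delta_n, c) \leq -M(c) + M(c)/2 = -M(c)/2.
\end{align*}
I then define $\deltaBallK\colon (2, \ourK) \to (0, \betaK)$ as a continuous positive function satisfying $\deltaBallK(c) \leq \delta_n$ on $I_n$, produced by piecewise linear interpolation across $I_{n+1} \setminus I_n$; shrinking $\deltaBallK$ only strengthens the inequality since $\rho$ and $\sigma_n$ are continuous in $\delta$ and vanish at $0$. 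Finally, set $\epsBallK(c) \coloneqq M(c)/2$, which is positive and continuous.

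The main obstacle is not the pointwise analytic bound (the decomposition $\JK = G + C$ and the estimates on $G$ are elementary once set up), but rather securing \emph{continuity and uniformity in $c$} over an open interval $(2, \ourK)$ where the threshold $\lambda = Q^{-1}(c)$ and hence $M(c)$ may degenerate toward the endpoints. The compact-exhaustion/interpolation argument handles this, and choosing $\delta_n$ strictly decreasing together with the piecewise continuous interpolation keeps both $\deltaBallK$ and $\epsBallK$ well-defined, continuous, and positive on the entire interval.
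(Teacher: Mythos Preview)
Your argument is correct and follows essentially the same route as the paper: evaluate $\JK$ at the corner $(\alpha,\zeta_1,\zeta_2)=(1,1,0)$ to get strict negativity, then use compactness in $c$ plus a piecewise-constant/piecewise-linear construction to produce continuous $\deltaBallK$ and $\epsBallK=M(c)/2=-\tfrac12\JK(1,(1,0);c)$. Your explicit decomposition $\JK=G(\alpha,\delta)+C(\delta,c)$ and the bound $\sup_{\alpha\in[1-\delta,1]}G\le\rho(\delta)$ make the continuity step more concrete than the paper's bare uniform-continuity appeal, but the structure is the same. One small imprecision: the claim ``shrinking $\deltaBallK$ only strengthens the inequality'' is true, but not because $\rho$ and $\sigma_n$ are continuous and vanish at $0$; the correct reason is that $\rho$ is increasing on $(0,\tfrac12)$ and $C(\cdot,c)$ is decreasing there (since $\expMTwo'(\lambda(2\delta-1))=e^{\lambda(2\delta-1)}-1<0$), so replacing $\delta_n$ by any smaller $\delta$ only decreases $\rho(\delta)+C(\delta,c)$.
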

\begin{proof}
    This is Equation 5.30 of \cite{pittel2016satisfiability}. We elaborate a proof since they only mention it as a direct consequence of the continuity of $\JK$.

    Suppose $c>2$ and $\ourK \geq 3$. From direct substitution into the definition of $\JK$ in \Cref{def:Jkm}, we see
    \begin{align*}
        \JK(1,(1,0);c) &= \frac{1}{c} \ln \left(\frac{\expMTwo(\lambda) + \expMTwo(-\lambda)}{2 \expMTwo(\lambda)}\right).
    \end{align*}
    Note $\lambda = Q^{-1}(c) > 0$ because $c>2$. Hence $\expMTwo(\lambda) > \expMTwo(-\lambda)$, so
    \begin{align*}
        \JK(1,(1,0);c) < 0.
    \end{align*}

    Let $\epsBallK(c) = - \JK(1,(1,0);c)/2$, so $\epsBallK$ is positive and continuous in $c$.
    
    Fix $\ourK \geq 3$. For $n\geq 0$, define $I_n$ as
    \begin{align*}
        I_n \coloneqq (K - 1/n, \,K-1/(n+1)].
    \end{align*}
    For each $n\geq 1$, since $\JK$ is continuous, it is in particular uniformly continuous in $c \in \ov{I_n}$ since $\ov{I_n}$ is compact. Hence there exists some $\delta_n > 0$ such that 
    \begin{align*}
        \JK(\alpha,(\zeta_1,\zeta_2),c) \leq -\epsK(c) \foralldisplay{c \in \ov{I_n},\  \abs{(\alpha,\zeta_1,\zeta_2) - (1,1,0)} < \delta_n}.
    \end{align*}
    Define $\deltaBallK''$ on $c \in (K-1,K] = \bigcup_{n=1}^\infty I_n$ as constant on each $I_n$, such that for all $c \in I_n$,
    \begin{align*}
        \deltaBallK''(c) \coloneqq \min\{\betaK/2, \delta_{n}\}.
    \end{align*}
    This function $\deltaBallK''$ is piecewise constant and positive, so there exists a positive continuous piecewise-linear lower bound $\deltaBallK'(c) \leq \deltaBallK''(c)$ defined on $c \in [K-1,K)$.

    Hence we have positive continuous functions $\epsBallK$ and $\deltaBallK'$ such that
    \begin{align*}
        \JK(\alpha,(\zeta_1,\zeta_2);c) \leq -\epsBallK(c) \foralldisplay{\abs{(\alpha,\zeta_1,\zeta_2)-(1,1,0)} < \deltaBallK'(c)}.
    \end{align*}
    The result follows by defining the function $\deltaBallK \coloneqq \deltaBallK'/2$ since the segment $\{(\alpha,1-\delta,\delta)\mid \alpha\in [1-\delta,1]\}$ is a subset of the ball $\abs{(\alpha,\zeta_1,\zeta_2)-(1,1,0)} < 2\delta$.
\end{proof}

\subsection{Upper bound on reciprocal of binomial}

\begin{lemma}
    \label{lem:binomial-inv-upper-bound-appendix}
    There exists a constant $C$ such that for all $m\geq 1$ and $1\leq \ell \leq m$,
    \begin{align*}
        \binom{m}{\ell}^{-1} \leq C \sqrt{\ell} \exp(-mH(\alpha)),
    \end{align*}
    where $\alpha = \ell/m$ and $\mathdf{H(\alpha)}\coloneqq -\alpha \ln(\alpha) - (1-\alpha)\ln(1-\alpha)$ is the entropy function.
\end{lemma}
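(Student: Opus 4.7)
The plan is to apply Stirling's formula with explicit error control. Writing Stirling in the form $n! = \sqrt{2\pi n}(n/e)^n e^{\theta_n/(12n)}$ with $\theta_n \in (0,1)$, each factor contributes a multiplicative error bounded uniformly by $e^{1/12}$ whenever $n \geq 1$, so any ratio of three such factorials incurs a multiplicative error that is a universal constant.

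First I would handle the generic case $1 \leq \ell \leq m-1$, where all three factorials in $\binom{m}{\ell} = m!/(\ell!(m-\ell)!)$ have arguments at least $1$. Applying Stirling to each and noting that the $e^n$ contributions cancel (since $m = \ell + (m-\ell)$), the ratio reduces, up to a bounded multiplicative constant, to
\[
    \binom{m}{\ell} = \Theta(1) \cdot \sqrt{\frac{m}{\ell(m-\ell)}} \cdot \frac{m^m}{\ell^\ell (m-\ell)^{m-\ell}}.
\]
Substituting $\ell = m\alpha$ and $m-\ell = m(1-\alpha)$, the last factor rewrites as $\alpha^{-m\alpha}(1-\alpha)^{-m(1-\alpha)} = \exp(mH(\alpha))$. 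Inverting and using the trivial bound $\sqrt{(m-\ell)/m} \leq 1$ gives
\[
    \binom{m}{\ell}^{-1} \leq C_1 \sqrt{\ell}\,\exp(-mH(\alpha))
\]
for some absolute constant $C_1$, which is the desired inequality on this range.

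The only remaining case is the degenerate edge $\ell = m$, where the Stirling expression above breaks down (because $(m-\ell)^{m-\ell}$ is $0^0$ and one square-root factor vanishes). Here I would simply check it by hand: $\binom{m}{m}^{-1} = 1$, $H(1)=0$ so $\exp(-mH(\alpha))=1$, and $\sqrt{\ell} = \sqrt{m} \geq 1$, so the inequality holds with any $C \geq 1$. Taking $C = \max(C_1, 1)$ then completes the argument.

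There is no real obstacle; this is a standard Stirling exercise. The only point to be careful about is uniform control of Stirling's error factor across the full range $1 \leq \ell \leq m-1$, which is why I would use the nonasymptotic bound $e^{\theta_n/(12n)} \leq e^{1/12}$ for $n \geq 1$ rather than the asymptotic version of Stirling; this makes it immediate that the implicit constants do not blow up when $\ell$ is as small as $1$ or as close to $m$ as $m-1$.
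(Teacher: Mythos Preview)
Your proof is correct and follows essentially the same approach as the paper: apply two-sided Stirling bounds to each factorial to obtain $\binom{m}{\ell}^{-1} \leq C\sqrt{m\alpha(1-\alpha)}\exp(-mH(\alpha))$, drop the factor $\sqrt{1-\alpha}\leq 1$, and verify the endpoint $\ell=m$ by hand. The only cosmetic difference is that you invoke the explicit Robbins form of Stirling with error $e^{\theta_n/(12n)}$, whereas the paper appeals to the asymptotic version and then deduces uniform two-sided constants from convergence; both yield the same estimate.
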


\begin{proof}
    Stirling's approximation states
    \begin{align*}
        \frac{\sqrt{2\pi n} e^{-n} \exp(n \ln(n))}{n!} \to 1
    \end{align*}
    as $n\to\infty$. This implies the existence of a constant $C_1$ such that
    \begin{align*}
        \frac{1}{C_1} \leq \frac{\sqrt{n} e^{-n} \exp(n \ln(n))}{n!} \leq C_1
    \end{align*}
    for all $n\geq 1$. Applying this bound to each involved factorial gives the existence of a constant $C$ such that
    \begin{align*}
        \binom{m}{\ell}^{-1} = \frac{\ell!(m-\ell)!}{m!} \leq C \sqrt{m \alpha (1-\alpha)} \exp(-mH(\alpha)).
    \end{align*}
    for all $\ell,m\geq 1$ with $m - \ell \geq 1$, where $\alpha = \ell/m$. Since $1-\alpha \leq 1$, we get
    \begin{align*}
        \binom{m}{\ell}^{-1} = \frac{\ell!(m-\ell)!}{m!} \leq C \sqrt{m \alpha} \exp(-mH(\alpha)).
    \end{align*}
   Note this bound holds in particular for $m = \ell\geq 1$, so we can loosen the $m-\ell\geq 1$ constraint to $m-\ell \geq 0$, proving the result.
\end{proof}

\newpage
\begin{center}
    {\LARGE NOT FOR PUBLICATION}
\end{center}

{
\label{table-of-contents}
\tableofcontents
}
\clearpage

\printindex

\end{document}